\theoremstyle{plain}
\newtheorem{theo}{Theorem}
\newtheorem{lemm}{Lemma}
\newtheorem{coro}{Corollary}
\newtheorem{prop}{Proposition}
\theoremstyle{definition}
\newtheorem{exam}{Example}
\newtheorem{defi}{Definition}
\newtheorem{assu}{Assumption}
\newtheorem{rema}{Remark}
\renewcommand{\hat}{\widehat}
\renewcommand{\tilde}{\widetilde}
\newcommand{\E}{{\mathbb E}}
\renewcommand{\R}{{\mathbb R}}
\newcommand{\N}{\mathbb N}
\newcommand{\bigo}{\mathcal{O}}
\newcommand{\littleo}{o}
\newcommand{\cl}{\mathrm{cl}}
\newcommand{\reach}{\mathrm{reach}}
\newcommand{\rconv}{\mathrm{rconv}}
\newcommand{\rch}[2]{\hat{\mathrm{rch}}^{(\epsilon_{#2})}(\hat{#1}^{(#2)})}
\newcommand{\rnv}[2]{\hat{\mathrm{r}}^{(\epsilon_{#2})}(\hat{#1}^{(#2)})}
\newcommand{\norm}[1]{\left\| #1 \right\|_2}
\newcommand{\pc}{\Phi} 
\definecolor{R}{RGB}{255, 150, 0}
\definecolor{T}{RGB}{0, 100, 255}
\begin{document}

\title[Computable bounds for the reach and $r$-convexity of subsets of $\R^d$]{Computable bounds for the reach and $r$-convexity of subsets of $\R^d$}
\author{\centering Ryan Cotsakis\\
\vspace{12pt}
{\small
Laboratoire J.A. Dieudonné, Université Côte d'Azur\\
28, Avenue Valrose, 06108 Nice Cedex 2, France}\\
\vspace{12pt}
{\small ryan.cotsakis@univ-cotedazur.fr}}

\abstract{The convexity of a set can be generalized to the two weaker notions of reach and $r$-convexity; both describe the regularity of a set's boundary. For any compact subset of $\R^d$, we provide methods for computing upper bounds on these quantities from point cloud data. The bounds converge to the respective quantities as the point cloud becomes dense in the set, and the rate of convergence for the bound on the reach is given under a weak regularity condition. We also introduce the $\beta$-reach, a generalization of the reach that excludes small-scale features of size less than a parameter $\beta\in[0,\infty)$. Numerical studies suggest how the $\beta$-reach can be used in high-dimension to infer the reach and other geometric properties of smooth submanifolds. }

\keywords{double offset, beta-reach, medial axis, high-dimensional, point clouds, geometric inference}

\maketitle

{\small \textbf{Acknowledgements}}
\vspace{5pt}

{\small I extend my gratitude to Elena Di Bernardino and Thomas Opitz for their attentive and indispensable guidance. Thank you to the anonymous reviewers, who significantly helped to improve the communication of ideas in this manuscript. This work has been supported by the French government, through the 3IA C\^{o}te d'Azur Investments in the Future project managed by the National Research Agency (ANR) with the reference number ANR-19-P3IA-0002.}
\newpage

\section{Introduction}
A number of concepts from convex geometry generalize from convex sets to much larger classes of sets. A classic example from \cite{federer1959} is the extension of kinematic formulas (in particular, Steiner's formula) for convex sets to sets with \emph{positive reach} (see Definition~\ref{def:reach}). Another example is the notion of the convex hull of a set, which can be weakened to the \textit{$r$-convex hull}, for $r > 0$. This weak notion of a convex hull, instead of being expressed as the intersection of half-spaces, is expressed in terms of intersections of the complements of open balls of radius $r$.
The resulting intersection is said to be \emph{$r$-convex} (see Definition~\ref{def:r-convex}), which differs subtly from the notion of reach, and these differences have been studied in \cite{colesanti2010} and \cite{cuevas2012} for example. Both the reach and $r$-convexity---ranging from 0 to $\infty$ inclusive---can be seen as measures of the degree to which a set is convex. This paper introduces methods of computing upper bounds for these quantities from point cloud data in several, general settings.
\smallskip

There is a vast literature on properties of $r$-convex sets, dating from \cite{perkal1956}. Their relations to other classes of sets that generalize convexity have been studied in \cite{serra1984, walther1999, colesanti2010, cuevas2012}. The use of $r$-convexity in image smoothing has been suggested in \cite{serra1984, walther1999, cuevas2007} as well as for set estimation in \cite{manilevitska1993, cuevas2007, pateirolopez2008, cuevas2009, cuevas2012, aaron2022}. In other literature, the $r$-convex hull of a set is referred to as its \textit{double offset} \citep{chazal2007, chazal2009_1, kim2019}. In \cite{cuevas2004}, the authors use a rolling-type condition (weaker than $r$-convexity) to improve the rate of convergence of their set boundary estimator. Efforts towards estimating the largest $r$ such that a set is  $r$-convex have been made in \cite{rodriguezcasal2016}; the authors test the uniformity of a point cloud on its $r$-convex hull using the statistical test of uniformity proposed in \cite{berrendero2012}.
\smallskip

The reach is a popular measure of convexity largely due to its link with the Steiner formula; for the larger the reach of a set, the larger the interval on which the volume of a dilation of the set is polynomial in the dilation radius \cite[Theorem~5.6]{federer1959}. The positivity of the reach implies a number of desirable properties, and so positive reach is a common hypothesis to guarantee convergence rates of statistical estimators of geometric quantities \citep{chazal2008, thale2008, cuevas2009, rataj2019, bierme2019, cotsakis2022}. In the field of geometric data analysis, the reach constitutes a commonly used measure of regularity of a set's boundary, and is hence also referred to as the \textit{condition number} \cite{niyogi2008}. In topological data analysis, a set's reach is shown to quantify the ability to infer its homology from point cloud data
\citep{kim2019, niyogi2008, lieutier2004, chazal2005_1}.

The statistical estimation of the reach has been of particular interest in recent literature. \cite{aamari2019} and \cite{aamari2019_1} suggest estimating the reach of a smooth submanifold of $\R^d$ by measuring distances between it and its tangent spaces---a strategy inspired by the formulation of the reach in \cite[Theorem~4.18]{federer1959}. These works obtain  bounds on the minimax rate of convegence for this estimator when the manifold is $C^3$-smooth, and when its tangent spaces are known. Following up to these works, \cite{aamari2022} establishes an optimal convergence rate for minimax estimators of the reach of $C^k$-smooth submanifolds of $\R^d$ without boundary. The rates that they establish adapt to the smoothness of the manifold, and to the type of phenomenon that limits the reach (see Remark~\ref{rem:aamari_split}).
\cite{berenfeld2022} looks to the \textit{convexity defect function} introduced in \cite{attali2013} and suggests an estimator for the reach of $C^k$-smooth manifolds by establishing a link between the convexity defect function and the reach. Convergence rates of their estimator are given for $k\geq 3$. In \cite{cholaquidis2022}, the authors introduce a complete and tractable method for estimating the reach from point cloud data involving the computation of graph distances in a spatial network defined over the point cloud. Their approach is based on the formulation of the reach introduced in \cite[Theorem~1]{boissonnat2019}.

Other strategies for estimating the reach involve a prior estimation of the \textit{medial axis} (see Definition~\ref{def:medial_axis}) \citep{dey2003, dey2006}. The $\lambda$-medial axis \citep{chazal2005}, $\mu$-medial axis \citep{chazal2009}, and $(\lambda,\alpha)$-medial axis \citep{lieutier2023} are all generalizations of the medial axis, and the reach can conceivably be approximated by the minimal distance from a set to the estimate of the medial axis of its complement. Such a strategy is suggested in \cite{cuevas2014} for the $\lambda$-medial axis, where the authors heavily rely on the notion of $r$-convexity in their construction. 

A number of other authors have taken an interest in the mathematical properties of the reach. \cite{poliquin2000} identifies the sets of reach $r$ with the $r$-proximally smooth sets introduced in \cite{clarke1995}, implying that the reach can be characterized by the gradients of the distance-to-set function. \cite{colesanti2010} makes a number of insightful connections between the reach and other geometrical properties of sets. \cite{attali2015} studies Vietoris–Rips complexes via the reach, proving that the reach of a set can only increase if intersected by sufficiently small balls. An alternative characterization of the reach, involving pairwise geodesic distances, is provided in \cite{boissonnat2019}.
The authors also study the relationship between midpoints of pairs of points in a set, and the set's reach \cite[Lemma~1]{boissonnat2019}. We base the construction of our bound for the reach on this result (see Theorem~\ref{thm:equivalent_reach}).
\smallskip

This paper makes steps towards providing computationally tractable methods for bounding the reach and $r$-convexity of subsets of $\R^d$ given point cloud data that represent the underlying sets.

Firstly, we establish some facts about the reach of closed subsets of $\R^d$. We prove that the $r$-convexity and reach are equivalent for compact subsets of $\R^d$ whose topological boundary is a $C^1$-smooth, $(d-1)$-dimensional manifold without boundary (see Theorem~\ref{thm:reach_closing}). In addition, for closed subsets of $\R^d$, we introduce the $\beta$-reach (see Definition~\ref{def:beta_reach}), a quantity that loosely represents the reach of a set when features of size less than $\beta\in[0,\infty)$ are ignored. Indeed, the $\beta$-reach is identified with the reach for $\beta=0$ (see Theorem~\ref{thm:equivalent_reach}).

These ideas are used to create methods of inferring bounds on the reach and $r$-convexity of sets from point cloud data. For general, closed subsets of $\R^d$ (possibly having finite $d$-volume, and having \textit{no smoothness conditions} on their topological boundaries), we provide an upper bound of the $r$-convexity of the set based on samples of the set and its complement at sampling locations that extend over $\R^d$. We show that, as the sampling locations become dense in $\R^d$, this bound converges to the largest $r$ such that the set is $r$-convex (see Theorem~\ref{thm:rconv_estimator}). An example on real data in 3 dimensions shows that this method identifies regions where the underlying set is not locally $r$-convex, for $r>0$, with a test specificity of 100\%.
Similarly, for any closed set $A\subseteq\R^d$, we define an upper bound on the reach based on a set of points known to reside in $A$. As the set of points converges in the Hausdorff metric to $A$, we show that the bound converges to the reach of $A$, and provide the rate of convergence in terms of the Hausdorff distance between $A$ and the sample points (see Theorem~\ref{thm:reach_estimator}). A weak regularity condition on the $\beta$-reach of $A$, for $\beta$ near 0, is used to show the convergence of our upper bound with a rate. In practice, both the $\beta$-reach of a point cloud and the upper bound on the reach can be computed efficiently in high-dimension. The computational complexity of the method increases only linearly with the dimension of the ambient space $d$.
\smallskip

The organization of the paper is as follows. Section~\ref{sec:definitions} introduces the notation that we use throughout the document, explores the relationships between the reach and $r$-convexity, and introduces the $\beta$-reach.
Section~\ref{sec:point_cloud} describes the three methods that we propose for inferring bounds and approximations for the reach and $r$-convexity of general compact subsets of $\R^d$ from point cloud data. The bounds for the $r$-convexity and reach are studied in Sections~\ref{sec:point_cloud_rconv} and~\ref{sec:point_cloud_reach} respectively. Section~\ref{sec:point_cloud_beta_reach} elaboates on how the $\beta$-reach of point clouds can be used to approximate the $\beta$-reach of the sets that they represent. In Section~\ref{sec:sim}, we provide numerical studies that underline the computability of our methods. An application of the methods on real data is given in Section~\ref{sec:sim:aircraft}. In Section~\ref{sec:sim_convergence}, the methods are tested against simulated data for which the reach and $r$-convexity are known, and empirical rates of convergence are provided.
Some technical proofs and auxiliary results are postponed to Section~\ref{sec:proofs}.

\section{Definitions and important notions}\label{sec:definitions}

The sets that we study in this paper are subsets of $\R^d$, endowed with the Euclidean metric $\norm{\cdot}$. For a set $S\subseteq\R^d$, let $\partial S$ denote its topological boundary, let $\cl(S) := S\cup \partial S$ denote its closure, and let $S^c$ denote its complement in $\R^d$. Denote the closed ball with radius $r\in\R^+$ centered at $s\in\R^d$ by $B(s,r):=\{t\in\R^d : \norm{t-s} \leq r\}$. For $t\in\R^d$, denote the distance between $t$ and a non-empty set $S$ by $\delta_S(t):= \inf\{\norm{t-s} : s\in S\}$.

\subsection{Set dilation, set erosion, and combinations of the two}

\begin{defi}[Operations on subsets of $\R^d$]\label{def:minkowski}
We recall the Minkowski addition of two sets $A, B \subseteq \R^d$,
$$A \oplus B := \{x+y:x\in A, y\in B\}.$$
The Minkowski difference is given by
$$A \ominus B := \bigcap_{y\in B}(A\oplus \{y\}) = (A^c \oplus B)^c,$$
where $A^c$ denotes the complement of $A$ in $\R^d$. For $r\in\R$, let
$$A_r := \begin{cases}
        A\oplus B(0,r), & \text{for } r\geq 0,\\
        A\ominus B(0,-r), & \text{for } r < 0,
        \end{cases} 
$$
denote either the dilation or erosion of a set $A$, depending on the sign of $r$. Finally, define $A_{\bullet r}:= (A_r)_{-r}$ to be the closing of $A$ by $B(0,r)$ if $r\geq 0$ and the opening of $A$ by $B(0,-r)$ if $r<0$.
\end{defi}

For $r\geq 0$ and $A\subseteq \R^d$ closed, it follows from Definition~\ref{def:minkowski} that $A_r$ (also known as the \textit{$r$-offset} of $A$ in, \textit{e.g.}, \cite{chazal2007}) denotes all the points in $\R^d$ within a distance $r$ of the set $A$. The set $A_{-r}$ denotes all the points in $A$ a distance of at least $r$ from $\partial A$.
\smallskip

With these notions established, the Hausdorff distance between two closed sets $A,B\subseteq \R^d$ is defined as $d_H(A,B):= \inf\{r \in\R^+ : A \subseteq B_r, B \subseteq A_r\}$.

\begin{lemm}\label{lem:additivity_of_dilation}
Let $r,s > 0$, and let $A\subseteq \R^d$. The following identities hold:
\begin{enumerate}
\item[(a)] $\qquad(A_r)^c = (A^c)_{-r}$,
\item[(b)] $\qquad A\subseteq A_{\bullet r}$,
\item[(c)] $\qquad(A_r)_s = A_{r+s}$,
\item[(d)] $\qquad(A_r)_{-s} \supseteq A_{r-s}$,
\item[(e)] $\qquad(A_{-r})_s \subseteq A_{s-r}$.
\end{enumerate}
\end{lemm}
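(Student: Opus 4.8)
The plan is to prove the five identities in an order that lets the subtler ones rest on the routine ones, relying on two basic facts: Minkowski addition is associative, and $B(0,r)\oplus B(0,s)=B(0,r+s)$ (the inclusion $\subseteq$ is the triangle inequality, and $\supseteq$ follows by writing $w=\tfrac{r}{r+s}w+\tfrac{s}{r+s}w$). I would begin with (a), which is pure set algebra: since $r>0$ we have $A_r=A\oplus B(0,r)$, and the definition of the Minkowski difference gives $(A^c)_{-r}=A^c\ominus B(0,r)=\big((A^c)^c\oplus B(0,r)\big)^c=(A_r)^c$, with no closedness of $A$ needed. Running the same computation for an offset of arbitrary sign yields the global duality $(X_t)^c=(X^c)_{-t}$ for every $t\in\R$ and every $X\subseteq\R^d$; I would record this, since it is the engine that converts statements about dilations into statements about erosions and will trivialize (e).

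Next I would dispatch (c) by associativity: $(A_r)_s=(A\oplus B(0,r))\oplus B(0,s)=A\oplus\big(B(0,r)\oplus B(0,s)\big)=A\oplus B(0,r+s)=A_{r+s}$. For (b) I would use that, by definition, $A_{\bullet r}=(A_r)\ominus B(0,r)=\{z:z+B(0,r)\subseteq A_r\}$ (using $-B(0,r)=B(0,r)$); for any $a\in A$ and any $y\in B(0,r)$ one has $a+y\in A\oplus B(0,r)=A_r$, so $a\in A_{\bullet r}$. This is just extensivity of the closing.

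The genuine content, and the step I expect to be the main obstacle, is (d): unlike (c) it is only an inclusion, reflecting that a dilation followed by an erosion does not cancel. The cleanest tool is the dilation--erosion adjunction $Y_s\subseteq X\iff Y\subseteq X_{-s}$ (valid for $s>0$, and immediate from $X_{-s}=\{z:z+B(0,s)\subseteq X\}$). Applying it, (d) is equivalent to $(A_{r-s})_s\subseteq A_r$, which I would verify in two regimes. If $s\le r$, the left-hand side equals $A_r$ by (c), so the inclusion is an equality. If $s>r$, then $A_{r-s}=A_{-(s-r)}$, and using (c) to split the dilation together with the anti-extensivity of the opening $(A_{-(s-r)})_{s-r}\subseteq A$ (obtained by applying (b) to $A^c$ and dualizing via the global duality above), I get $(A_{r-s})_s=\big((A_{-(s-r)})_{s-r}\big)_r\subseteq A_r$ by monotonicity of dilation.

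Finally, (e) should fall out of (d) for free by complementation. Writing $A_{-r}=((A^c)_r)^c$ and applying the global duality twice gives $(A_{-r})_s=\big(((A^c)_r)_{-s}\big)^c$; inclusion (d) applied to $A^c$ reads $((A^c)_r)_{-s}\supseteq(A^c)_{r-s}$, and taking complements reverses this to $(A_{-r})_s\subseteq\big((A^c)_{r-s}\big)^c=A_{s-r}$, where the last equality is once more the global duality. The only care required throughout is the bookkeeping of signs, and keeping straight that (d) and (e) are inclusions precisely because opening and closing are not inverse operations.
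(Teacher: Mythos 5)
Your proposal is correct and follows essentially the same route as the paper: (a) and (c) by definition-chasing and associativity, (b) as extensivity of the closing, (d) by a case split on the sign of $r-s$ using (b)/(c), and (e) by dualizing (d) via (a). The only cosmetic differences are that you prove (b) directly rather than by contradiction and package (d) through the dilation--erosion adjunction, which reduces to the same two-case computation the paper performs on $(A_r)_{-s}$ directly.
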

\begin{proof}[Proof of Lemma~\ref{lem:additivity_of_dilation}]
Fix $r,s > 0$. The identity in~(a) follows directly from Definition~\ref{def:minkowski}. Item~(b) is proved by contradiction. Let $a\in A$ and suppose $a\in (A_{\bullet r})^c = (A_r)^c \oplus B(0,r)$. Then there is a $p\in (A_r)^c$ such that $a\in B(p,r) \Leftrightarrow \norm{p-a} \leq r \Leftrightarrow p \in B(a,r) \Leftrightarrow p\in A_r$; thus, a contradiction. To prove~(c), remark that $B(0,r) \oplus B(0,s) = B(0,r+s)$ and that Minkowski addition is associative. To prove~(d), consider the case where $r \geq s$, then $(A_r)_{-s} = (A_{r -s + s})_{-s} = ((A_{r -s})_s)_{-s} = (A_{r -s})_{\bullet s} \supseteq A_{r -s}$. For the case $r < s$, write $(A_r)_{-s} = (A_r)_{r-s-r} = ((A_r)_{-r})_{r-s} = (A_{\bullet r})_{r-s} \supseteq A_{r -s}$. To show~(e), consider the complements of the sets in~(d) and apply~(a) repeatedly.
\end{proof}

\subsection{The reach and related concepts}

\begin{defi}[The reach]\label{def:reach}
Recall from \cite{federer1959} that the \emph{reach} of a set $A\subseteq \R^d$ is given by
\begin{equation*}
\reach(A) := \sup\big\{r \in \R^+: \forall y \in A_r\ \exists!x\in A\ \mathrm{nearest\ to}\ y\big \}.
\end{equation*}
If $\reach(A) >0$, then $A$ is said to have \emph{positive reach}. 
\end{defi}

A useful notion related to the reach of a closed set $A$ is the \textit{medial axis} of $A^c$, originally proposed in \cite{blum1967}.
\begin{defi}[The medial axis]\label{def:medial_axis}
Let $O\subseteq \R^d$ be open. Its \textit{medial axis} $\mathcal{M}(O)$ is the set of points in $O$ with at least two closest points in $\partial O$.
\end{defi}
The reach of a closed set $A$ can be alternatively expressed as
\begin{equation}\label{eqn:medial_axis_reach}
\reach(A) = \inf\{\norm{a-x}: a\in A,\ x\in \mathcal{M}(A^c)\}.
\end{equation}

\subsubsection{Connections to $r$-convexity}

\begin{defi}[$r$-convexity]\label{def:r-convex}
A set $A\subseteq \R^d$ is said to be \emph{$r$-convex} for $r\in \R^+$ if it is closed and $A_{\bullet s} = A$ for all $s\in (0,r)$ (see, \emph{e.g.}, \cite{perkal1956}). Define the quantity $\rconv(A) := \sup\{r\in\R : A_{\bullet r} = A\}$.
\end{defi}

An equivalent definition of $r$-convexity is as follows: a set $A\subseteq \R^d$ is $r$-convex if and only if it can be expressed as the complement of a union of open balls of radius $r$.

\begin{theo}\label{thm:reach_closing}
Let $A$ be closed in $\R^d$. It holds that 
\begin{equation}\label{eqn:reach_inclusion}
\reach(A) \leq \rconv(A).
\end{equation}
Moreover, if $\partial A$ is a $C^1$-smooth $(d-1)$-dimensional manifold without boundary, then
\begin{equation}\label{eqn:reach_equality}
\reach(A) = \rconv(A).
\end{equation}
\end{theo}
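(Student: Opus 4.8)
The plan is to prove the inequality \eqref{eqn:reach_inclusion} by a direct construction and then to obtain the equality \eqref{eqn:reach_equality} by establishing the reverse inequality $\rconv(A)\le\reach(A)$ under the smoothness hypothesis. For \eqref{eqn:reach_inclusion} I would exploit the characterization of $r$-convexity recorded after Definition~\ref{def:r-convex}: $A$ is $r$-convex exactly when every point of $A^c$ lies in an open ball of radius $r$ contained in $A^c$. Fixing any $r<\reach(A)$ and any $x\in A^c$, I would exhibit such a ball in two cases. If $\delta_A(x)>r$, the open ball of radius $r$ about $x$ already avoids $A$ and contains $x$. If $0<\delta_A(x)\le r$, then $\delta_A(x)<\reach(A)$, so $x$ has a unique nearest point $p\in A$; writing $u=(x-p)/\norm{x-p}$ and invoking the normal-ray property of sets of positive reach \cite{federer1959}, one has $\delta_A(p+tu)=t$ for every $t<\reach(A)$. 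Taking $t=r$, the open ball of radius $r$ about $p+ru$ avoids $A$, and it contains $x$ because $\norm{x-(p+ru)}=r-\delta_A(x)<r$. Hence $A$ is $r$-convex for every $r<\reach(A)$, which gives $\rconv(A)\ge\reach(A)$. Here the main reliance is on the Federer normal-ray property, which I would quote rather than reprove.

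For \eqref{eqn:reach_equality}, assuming $\partial A$ is a $C^1$ hypersurface without boundary, I would argue by contradiction, supposing $\reach(A)<\rconv(A)$ and fixing $r$ strictly between the two. The key geometric step, and what I expect to be the main obstacle, is the following claim: $r$-convexity together with $C^1$-smoothness forces an exterior tangent ball of radius $r$ at every boundary point, namely that for each $p\in\partial A$ with outward unit normal $\nu(p)$, the open ball of radius $r$ about $p+r\nu(p)$ is contained in $A^c$. I would prove this by choosing points $x_n\in A^c$ with $x_n\to p$, covering each $x_n$ by a radius-$r$ ball lying in $A^c$ (possible since $A$ is $r$-convex), and passing to a convergent subsequence of the centres $z_n\to z$; continuity of $\delta_A$ gives $\delta_A(z)\ge r$, while $p\in A$ forces $\norm{z-p}=r=\delta_A(z)$, so $p$ is a nearest point of $A$ to $z$. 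A first-order optimality argument, which is precisely where $C^1$-smoothness is used, shows $z-p$ is orthogonal to $T_p\partial A$ and hence that $z=p+r\nu(p)$, proving the claim.

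Granting the claim, I would invoke the medial-axis formula \eqref{eqn:medial_axis_reach}. Since $\reach(A)<r$, there exists $m\in\mathcal{M}(A^c)$ with two distinct nearest points $p_1\neq p_2$ in $\partial A$ at common distance $s:=\delta_A(m)<r$, so that $m=p_i+s\nu(p_i)$ for $i=1,2$. The exterior tangent ball at $p_1$ lies in $A^c$ and therefore excludes $p_2\in A$, yielding $\norm{p_2-(p_1+r\nu(p_1))}\ge r$. Substituting $p_1+r\nu(p_1)=m+(r-s)\nu(p_1)$ and $p_2-m=-s\nu(p_2)$ and expanding the squared norm reduces this to $2s(s-r)\big(1-\langle\nu(p_1),\nu(p_2)\rangle\big)\ge 0$; since $s>0$ and $s<r$, this forces $\langle\nu(p_1),\nu(p_2)\rangle=1$, hence $\nu(p_1)=\nu(p_2)$ and therefore $p_1=m-s\nu(p_1)=m-s\nu(p_2)=p_2$, contradicting $p_1\neq p_2$. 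This establishes $\rconv(A)\le\reach(A)$ and, combined with \eqref{eqn:reach_inclusion}, completes the proof.
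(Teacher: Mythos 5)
Your proof is correct and shares the paper's overall strategy, but the execution of the key step in \eqref{eqn:reach_equality} is genuinely different and worth contrasting. For \eqref{eqn:reach_inclusion} the two arguments are interchangeable: the paper splits $A^c$ into $(A_r)^c$ and $A_r\setminus A$ and invokes \cite[Corollary~4.9]{federer1959} to get $\delta_{(A_r)^c}(x)=r-\delta_A(x)$, while you split on $\delta_A(x)>r$ versus $\delta_A(x)\le r$ and invoke the normal-ray property; both reduce to the same Federer machinery. For \eqref{eqn:reach_equality} both proofs argue by contradiction from a point with two nearest points $p_1\neq p_2$ at common distance $s<r$ and use $C^1$-smoothness to force the radius-$r$ balls covering $A^c$ near $p_1$ to be centred at $p_1+r\nu(p_1)$, after which the second nearest point lands inside the forbidden ball. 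The paper only locates the covering centres within a neighbourhood $B(p_1+rn_1,\theta(\epsilon))$, asserting the existence of $\theta$ as ``easily checked,'' and concludes via the internal tangency $B(m,s)\subset B(p_1+rn_1,r)$; you instead extract an \emph{exact} exterior tangent ball by compactness of the centres $z_n$ and close with the inner-product identity forcing $\nu(p_1)=\nu(p_2)$. Your compactness argument makes rigorous precisely the step the paper leaves informal, and your algebraic finish is the coordinate version of the paper's tangency containment; the two are equivalent in content.

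There is one small gap to close in your claim. After deducing $\norm{z-p_1}=\delta_A(z)=r$ and $z-p_1\perp T_{p_1}\partial A$, you conclude $z=p_1+r\nu(p_1)$, but nothing said so far excludes $z=p_1-r\nu(p_1)$; this matters because the hypotheses allow $A$ to have empty interior near $p_1$ (e.g.\ $A$ a hypersurface), in which case there is no canonical ``outward'' side and balls on either side could a priori arise as limits. The fix is short: take $x_n=p_1+(s/n)\nu(p_1)$, which lies in $A^c$ since $\delta_A(p_1+t\nu(p_1))=t$ for $t\in(0,s]$ (bounded below by $\delta_A(m)-\norm{m-p_1-t\nu(p_1)}$ and above by $t$), and observe that $\norm{z_n-x_n}<r\le\norm{z_n-p_1}$ places $z_n$ strictly on the $x_n$-side of the perpendicular bisector of the segment from $p_1$ to $x_n$, so $\langle z_n-p_1,\nu(p_1)\rangle>0$ and hence $\langle z-p_1,\nu(p_1)\rangle\ge 0$ in the limit, selecting the correct sign. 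With that line added, your proof is complete.
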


Equation~\eqref{eqn:reach_inclusion} is proven in \citet[Proposition~1]{cuevas2012} for compact $A$. Nonetheless, we reprove the statement for closed $A$ in the proof of Theorem~\ref{thm:reach_closing}, which we postpone to Section~\ref{sec:proofs}. The novelty in Theorem~\ref{thm:reach_closing} is that it provides a class of subsets of $\R^d$ for which the reach and $r$-convexity are equal.
 
To see that the $r$-convexity and the reach of a set are indeed distinct notions for general subsets of $\R^d$, Figure~\ref{fig:reach} provides an example of a closed set $A$ for which $\reach(A) < \rconv(A)$. Consider also the following remark.

\begin{figure}[t]
\centering
\includegraphics[width=0.75\linewidth]{./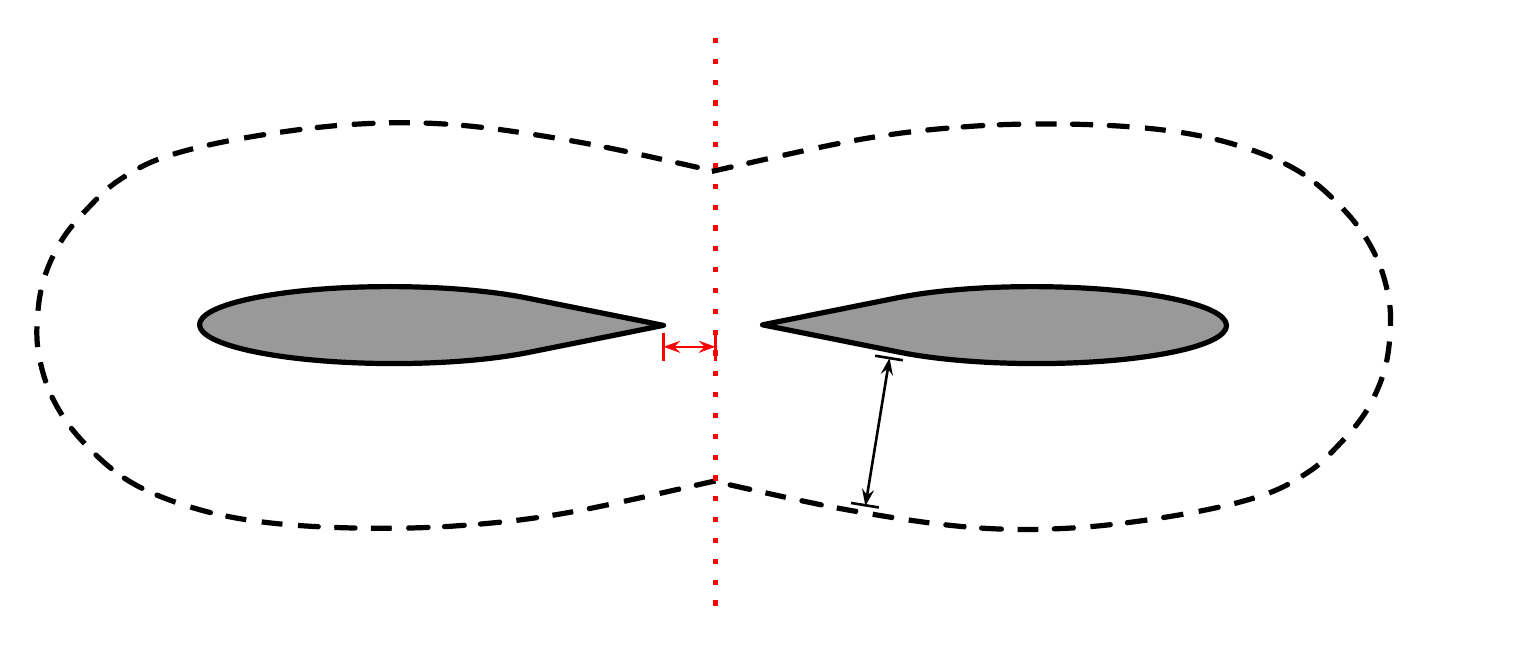}
\put(-171,40){\small{\color{red}{$\reach(A)$}}}
\put(-105,36){\small{$\rconv(A)$}}
\caption{The closed set $A$ (in grey) has $\rconv(A)>\reach(A)$. The set $A$ can be expressed as the complement of a union of open balls of radius $\rconv(A)$ whose centers lie outside the dotted black curve. The dotted red line is the medial axis $\mathcal{M}(A^c)$.}
\label{fig:reach}
\end{figure}

\begin{rema}
Any closed subset of $\R^d$ contained in a $(d-1)$-dimensional affine linear subspace is $r$-convex for all $r > 0$. This is easy to see since the complement of the set in the $(d-1)$-dimensional affine linear subspace is open, and is the union of open $(d-1)$-balls of radius less than $r$. Each $(d-1)$-ball can be expressed as the intersection of the affine linear subspace with an open ball of radius $r$ in $\R^d$; therefore, the closed subset in question can be expressed as the complement of a union of open balls of radius $r$ and is hence $r$-convex.

Recently, a counterexample to Borsuk's conjecture that $r$-convex sets are locally contractible, was published in \cite{cholaquidis2023}. The conjecture is easily seen to be false by mapping a closed set in $\R^{d-1}$ that is not locally contractible to a $(d-1)$-dimensional hyperplane in $\R^d$ under an isometry.
\end{rema}

Here, we provide some corollaries to Theorem~\ref{thm:reach_closing} that provide alternative sets of sufficient conditions for the equality of the reach and the $r$-convexity. The first applies to sets in Serra's regular model \cite[p.~144]{serra1984}.

\begin{coro}\label{cor:serra}
Let $A\subset\R^d$ be non-empty, compact, and path-connected. If $\rconv(\cl(A^c)) > 0$, then $\reach(A) = \rconv(A)$.
\end{coro}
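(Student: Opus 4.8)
The plan is to prove the reverse of the inequality in Theorem~\ref{thm:reach_closing}, namely $\rconv(A)\le\reach(A)$, since the bound $\reach(A)\le\rconv(A)$ already holds for every closed set. If $\rconv(A)=0$ there is nothing to do, because $0\le\reach(A)\le\rconv(A)=0$ forces $\reach(A)=\rconv(A)=0$. So I would assume $\rconv(A)>0$ and fix any $s$ with $0<s<\min\{\rconv(A),\rconv(\cl(A^c))\}$ (reading the minimum as $\rconv(A)$ when $\rconv(\cl(A^c))=\infty$). Using the ball characterization of $r$-convexity, the $s$-convexity of $A$ gives, through every boundary point of $A$, an open ball of radius $s$ contained in $A^c$ and tangent there — an outer rolling ball — while the $s$-convexity of $\cl(A^c)$ gives, through every boundary point of $\cl(A^c)$, an open ball of radius $s$ contained in the interior of $A$ — an inner rolling ball.

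I would first treat the full-dimensional case, where the interior of $A$ is non-empty. Here $A$ is regular closed (a consequence of $\rconv(A)>0$, which excludes lower-dimensional whiskers), so $\partial A=\partial(\cl(A^c))$ and both rolling-ball conditions are available at every point of $\partial A$. The key geometric step is that two disjoint open balls sharing a common boundary point $a\in\partial A$ are necessarily externally tangent there, so their centres are colinear with $a$; the inner and outer balls of radius $s$ therefore pin a single unit normal at each point and make it Lipschitz, so that $\partial A$ is a compact $C^{1}$ (indeed $C^{1,1}$) hypersurface. Since it bounds the compact full-dimensional set $A$, it is a $(d-1)$-dimensional manifold without boundary, and Theorem~\ref{thm:reach_closing} yields $\reach(A)=\rconv(A)$. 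The role of $\rconv(\cl(A^c))>0$ is precisely to forbid the convex corners of $A$ that would otherwise let the medial axis $\mathcal{M}(A^c)$ reach $\partial A$ (as in Figure~\ref{fig:reach}) and produce a strict inequality in~\eqref{eqn:reach_inclusion}.

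It then remains to dispatch the degenerate case in which $A$ has empty interior: here $\cl(A^c)=\R^d$, the second hypothesis is vacuous, and Theorem~\ref{thm:reach_closing} does not apply since $\partial A=A$ is not a manifold without boundary. I would argue directly from the medial-axis formula~\eqref{eqn:medial_axis_reach}, using that $\rconv(A)>0$ rules out every singular configuration (corners, self-crossings and junctions each force $\rconv(A)=0$), so that a compact, path-connected $A$ with $\rconv(A)>0$ is a $C^{1,1}$ submanifold, possibly with boundary, for which one checks that the nearest point of $A$ to any element of $\mathcal{M}(A^c)$ lies at distance at least $\rconv(A)$. The main obstacle, in both settings, is exactly this extraction of boundary regularity from the $r$-convexity hypotheses — rigorously passing from the two rolling-ball conditions to the absence of corners and to $C^{1}$-smoothness — together with the book-keeping by which path-connectedness and compactness ensure that $\partial A$ is a genuine closed manifold (respectively that $A$ is a single submanifold) rather than a pathological limiting set.
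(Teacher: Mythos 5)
Your overall strategy coincides with the paper's: reduce to the equality case of Theorem~\ref{thm:reach_closing} by showing that the two $r$-convexity hypotheses force $\partial A$ to be a $C^1$-smooth $(d-1)$-manifold without boundary. The difference is that the paper does not re-derive this regularity statement: it converts the hypothesis $\rconv(\cl(A^c))>0$ into the two-sided condition $A_{\bullet r}=A$ for all $r\in(-\delta,\delta)$ via Lemma~\ref{lem:closed_complement}, and then invokes Theorem~1 of \cite{walther1999}, which is precisely the ``two rolling balls imply a $C^1$ hypersurface'' statement for non-empty, compact, path-connected sets. What you offer in its place is a sketch of the proof of that theorem, and you yourself flag the passage from the two tangent-ball conditions to a Lipschitz unit normal (together with the bookkeeping showing $\partial A$ is a closed manifold and that $A$ is regular closed) as ``the main obstacle.'' As written, that step is not carried out, so the core of the argument is missing rather than merely routed differently; citing Walther's theorem, or actually proving it, is what is needed to close the gap. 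Note also that you attribute regular-closedness of $A$ to $\rconv(A)>0$ alone, whereas it is really the inner rolling ball condition coming from $\rconv(\cl(A^c))>0$ that does this work.

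The second, more serious problem is your treatment of the empty-interior case. You claim that $\rconv(A)>0$ ``rules out every singular configuration (corners, self-crossings and junctions each force $\rconv(A)=0$)'' and hence that $A$ is a $C^{1,1}$ submanifold. This is contradicted by the remark immediately following Theorem~\ref{thm:reach_closing}: \emph{any} closed subset of a $(d-1)$-dimensional affine subspace --- including one with corners or self-crossings, such as two crossing segments lying in a hyperplane of $\R^3$ --- is $r$-convex for \emph{every} $r>0$, yet such a set has reach $0$ and is not a manifold. Lower-dimensional $r$-convexity therefore carries essentially no regularity information, and the argument you propose for this case cannot work. (This case is genuinely delicate; the paper's own route sidesteps it through Lemma~\ref{lem:closed_complement} and the hypotheses of Walther's theorem, and the corollary is best read as a statement about full-dimensional sets in Serra's regular model. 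But the specific claim on which you base your argument is false.)
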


The proof of Corollary~\ref{cor:serra}, which relies heavily on Theorem~1 in \cite{walther1999}, is postponed to Section~\ref{sec:proofs}. Likewise, we prove the following result in Section~\ref{sec:proofs}.

\begin{coro}\label{cor:small_dilation}
Let $A$ be a closed set in $\R^d$ and let $\epsilon \in\R^+$. Then $\reach(A_\epsilon) = \rconv(A_\epsilon)$.
\end{coro}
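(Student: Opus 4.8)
The plan is to deduce the statement from Theorem~\ref{thm:reach_closing} by showing that, whenever $A_\epsilon$ has nonempty boundary and is not reach-limited, $\partial A_\epsilon$ is a $C^1$-smooth $(d-1)$-manifold without boundary. The inequality $\reach(A_\epsilon)\leq\rconv(A_\epsilon)$ is already supplied by \eqref{eqn:reach_inclusion}, so only the reverse direction is at issue. First I would dispose of the degenerate case: if $\rconv(A_\epsilon)=0$, then since the reach is non-negative we have $0\leq\reach(A_\epsilon)\leq\rconv(A_\epsilon)=0$ and equality holds. Hence I may assume $\rconv(A_\epsilon)=:R>0$ (allowing $R=\infty$), fix any $r\in(0,R)$ so that $A_\epsilon$ is $r$-convex, and reduce the problem to verifying the smoothness hypothesis of \eqref{eqn:reach_equality} for $A_\epsilon$ (the subcase $\partial A_\epsilon=\emptyset$, i.e.\ $A_\epsilon=\R^d$, being immediate).

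Next I would record two tangent-ball properties of $A_\epsilon$. For the \emph{inner} balls, writing $A_\epsilon=\{z:\delta_A(z)\leq\epsilon\}$, every $x\in\partial A_\epsilon$ satisfies $\delta_A(x)=\epsilon$; choosing a nearest point $a\in A$ yields $B(a,\epsilon)\subseteq A_\epsilon$ with $x\in\partial B(a,\epsilon)$, and the open ball lies in $\mathrm{int}(A_\epsilon)$ since each of its points has distance to $A$ strictly below $\epsilon$. For the \emph{outer} balls, the equivalent characterization of $r$-convexity stated after Definition~\ref{def:r-convex} gives that $(A_\epsilon)^c$ is a union of open balls of radius $r$; picking $y_n\to x$ with $y_n\in(A_\epsilon)^c$ and open balls $B(c_n,r)\ni y_n$ contained in $(A_\epsilon)^c$, the centers $c_n$ are bounded, so a subsequence satisfies $c_n\to c$, and a short passage to the limit shows $B(c,r)\subseteq(A_\epsilon)^c$ with $\|x-c\|=r$. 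Thus each boundary point of $A_\epsilon$ carries an inner tangent ball of radius $\epsilon$ and an outer tangent ball of radius $r$.

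The crux of the argument, and the step I expect to be the main obstacle, is upgrading this two-sided rolling-ball condition to genuine $C^1$-smoothness of $\partial A_\epsilon$. At a boundary point $x$, the interior of the inner ball lies in $\mathrm{int}(A_\epsilon)$ and the interior of the outer ball lies in $(A_\epsilon)^c$, so the two interiors are disjoint; two balls passing through the common point $x$ with disjoint interiors must be externally tangent, which forces the centers $a$ and $c$ together with $x$ to be collinear, with $x$ between them. This pins down a single unit outward normal $\hat n(x)$ and a supporting hyperplane at $x$, with $\partial A_\epsilon$ trapped near $x$ between the two balls. The fixed radii $\epsilon$ and $r$ then produce a uniform two-sided quadratic sandwich of $\partial A_\epsilon$ about each tangent hyperplane, from which the continuous dependence of $x\mapsto\hat n(x)$ and a local graph (indeed $C^{1,1}$) representation of $\partial A_\epsilon$ follow by the standard rolling-ball estimate. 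The existence and uniqueness of the tangent hyperplane is elementary, but its continuity in $x$ and the local graph property are exactly where the quantitative control from the uniform radii is needed; here I would either invoke the smoothing results of \cite{walther1999} or reproduce their estimate directly.

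Having established that $\partial A_\epsilon$ is a $C^1$-smooth $(d-1)$-dimensional manifold without boundary, I would conclude by applying \eqref{eqn:reach_equality} of Theorem~\ref{thm:reach_closing} to the set $A_\epsilon$, which gives $\reach(A_\epsilon)=\rconv(A_\epsilon)$ and completes the proof in the case $\rconv(A_\epsilon)>0$. Combined with the degenerate case handled at the outset, this establishes the equality for every closed $A$ and every $\epsilon\in\R^+$.
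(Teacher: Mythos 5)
Your proof is correct in substance but organizes the argument differently from the paper. The paper's proof is a short dichotomy on the regularity of $\partial(A_\epsilon)$: either the boundary is $C^1$-smooth and $(d-1)$-dimensional, in which case Theorem~\ref{thm:reach_closing} applies directly, or there is a non-smooth boundary point, which (being a point of $\partial\bigcup_{a\in A}B(a,\epsilon)$) must be an inward-pointing cusp formed by two distinct $\epsilon$-spheres centered in $A$, and such a cusp forces $\reach(A_\epsilon)=\rconv(A_\epsilon)=0$. You instead split on whether $\rconv(A_\epsilon)$ is positive and, in the positive case, derive a two-sided uniform tangent-ball condition (inner radius $\epsilon$ from the dilation structure, outer radius $r$ from $r$-convexity) to conclude $C^{1}$ smoothness before invoking Theorem~\ref{thm:reach_closing}; this is essentially the contrapositive of the paper's cusp observation, carried out with more care, and it closely parallels the paper's own Remark~\ref{rem:proof_of_coro_2}, which notes that under the stronger hypotheses of Corollary~\ref{cor:serra} one can conclude via Theorem~1 of \cite{walther1999}. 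One caveat: Walther's theorem as used in this paper assumes the set is compact and path-connected, whereas here $A$ is only closed, so $A_\epsilon$ need not satisfy either hypothesis; of your two proposed ways to close the crux step, only the second (reproducing the local rolling-ball estimate directly, which needs no global hypotheses) is available off the shelf. Your approach buys a rigorous treatment of a step the paper handles rather informally (``it is easily checked that \dots $c$ is at a cusp''), at the cost of having to supply or carefully cite the two-sided ball-condition-implies-$C^{1,1}$ lemma; the paper's approach is shorter but leans on an unproved geometric claim about the structure of non-smooth points of dilations.
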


\subsubsection{The $\beta$-reach}\label{sec:definitions_reach_beta_reach}

We show in Theorem~\ref{thm:equivalent_reach} below, that the reach of a set can be formulated in terms of pairs of points in the set, and the distance to the set from their midpoints. We define a parametrized version of the reach by restricting to pairs of points whose midpoints are sufficiently far from the set. The so-called $\beta$-reach is constructed from the following family of functions.

\begin{defi}[Spherical cap geometry]\label{def:g_h_b}
Define for $\alpha\in[0,\infty)$ and $x\in [0,\alpha/2]$,
\begin{equation}\label{eqn:g_ell}
g_\alpha(x) := \begin{cases} \frac{\alpha^2}{8x} + \frac{x}2, &\qquad x > 0,\\
\infty, &\qquad x = 0,
\end{cases}
\end{equation}
and its inverse for $r \geq \alpha/2$,
\begin{equation}\label{eqn:g_ell_inv}
g_\alpha^{-1}(r) := r - \sqrt{r^2 - \frac{\alpha^2}4}.
\end{equation}
\end{defi}

\begin{figure}[t]
\centering
\includegraphics[width=0.5\linewidth]{./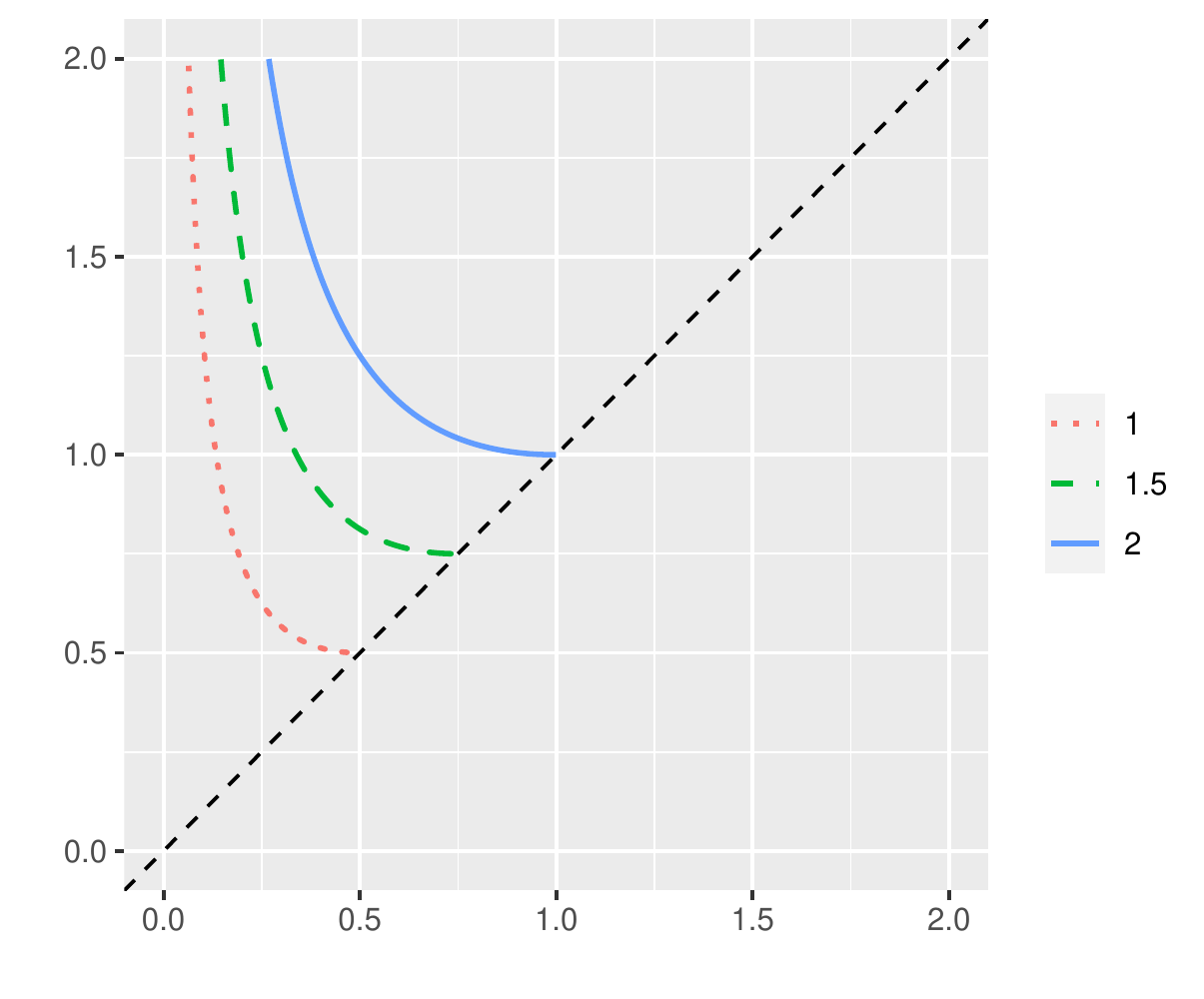}
\put(-20,90){$\alpha$}
\put(-90,0){$x$}
\put(-190,110){$g_\alpha(x)$}
\caption{$g_\alpha(x)$ in Definition~\ref{def:g_h_b} plotted for several values of $\alpha$.}
\label{fig:g_alpha_curves}
\end{figure}

\begin{figure}[t]
\centering
\includegraphics[width=0.4\linewidth]{./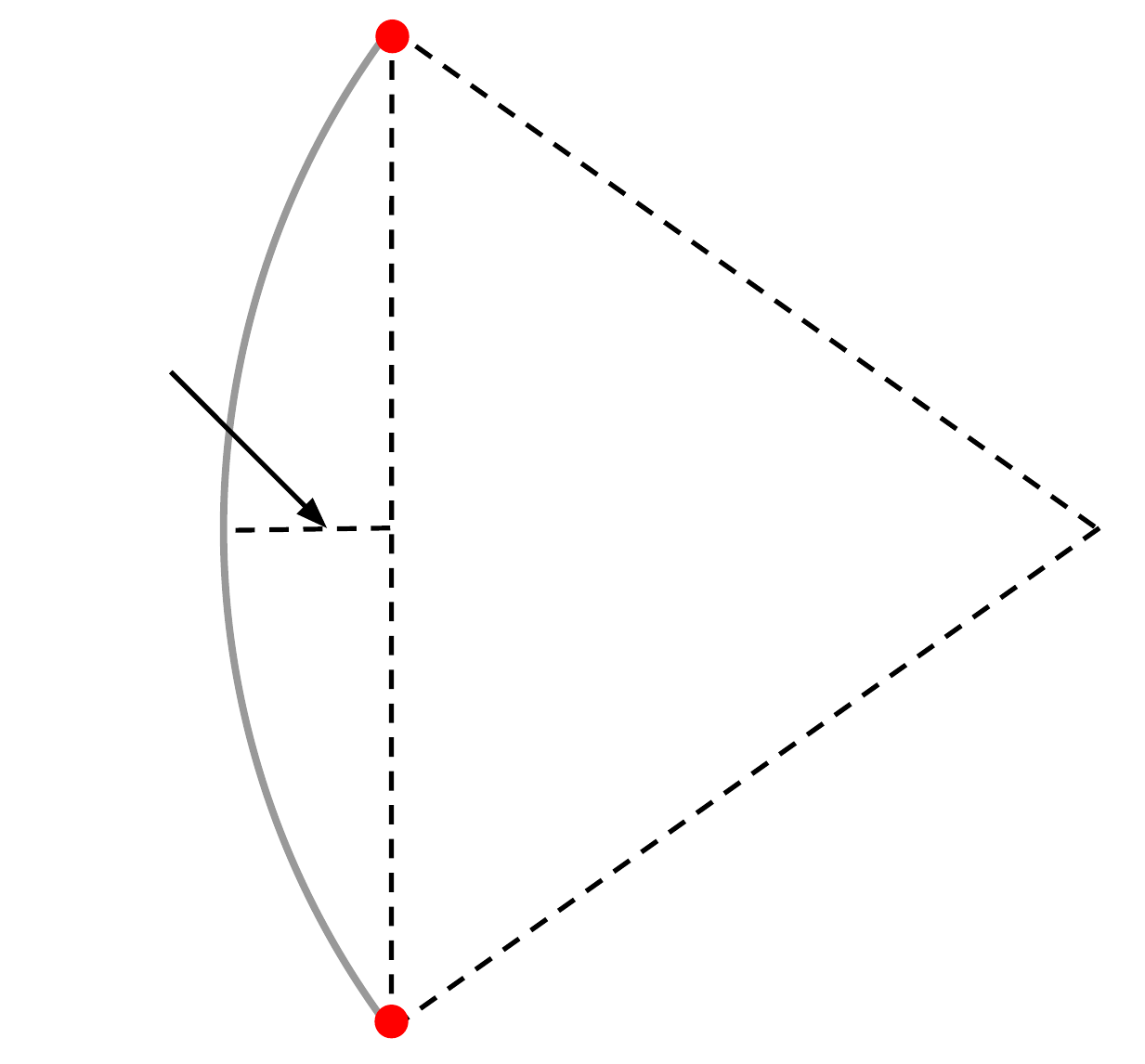}
\put(-45,105){$r$}
\put(-100,130){$a_1$}
\put(-100,-4){$a_2$}
\put(-163,93){$g_{\norm{a_2-a_1}}^{-1}(r)$}
\caption{A geometric interpretation of $g_\alpha(x)$ in Definition~\ref{def:g_h_b}.}
\label{fig:g_alpha}
\end{figure}

See Figure~\ref{fig:g_alpha} for a geometric interpretation of the function in~\eqref{eqn:g_ell} and its inverse in~\eqref{eqn:g_ell_inv}. Evidently from the figure, this function is derived from the height of a spherical cap; it is written about in \cite{attali2013, attali2015, berenfeld2022, boissonnat2019, divol2021} in reference to the reach.

\begin{figure}[t]
\centering
\includegraphics[width=0.45\linewidth]{./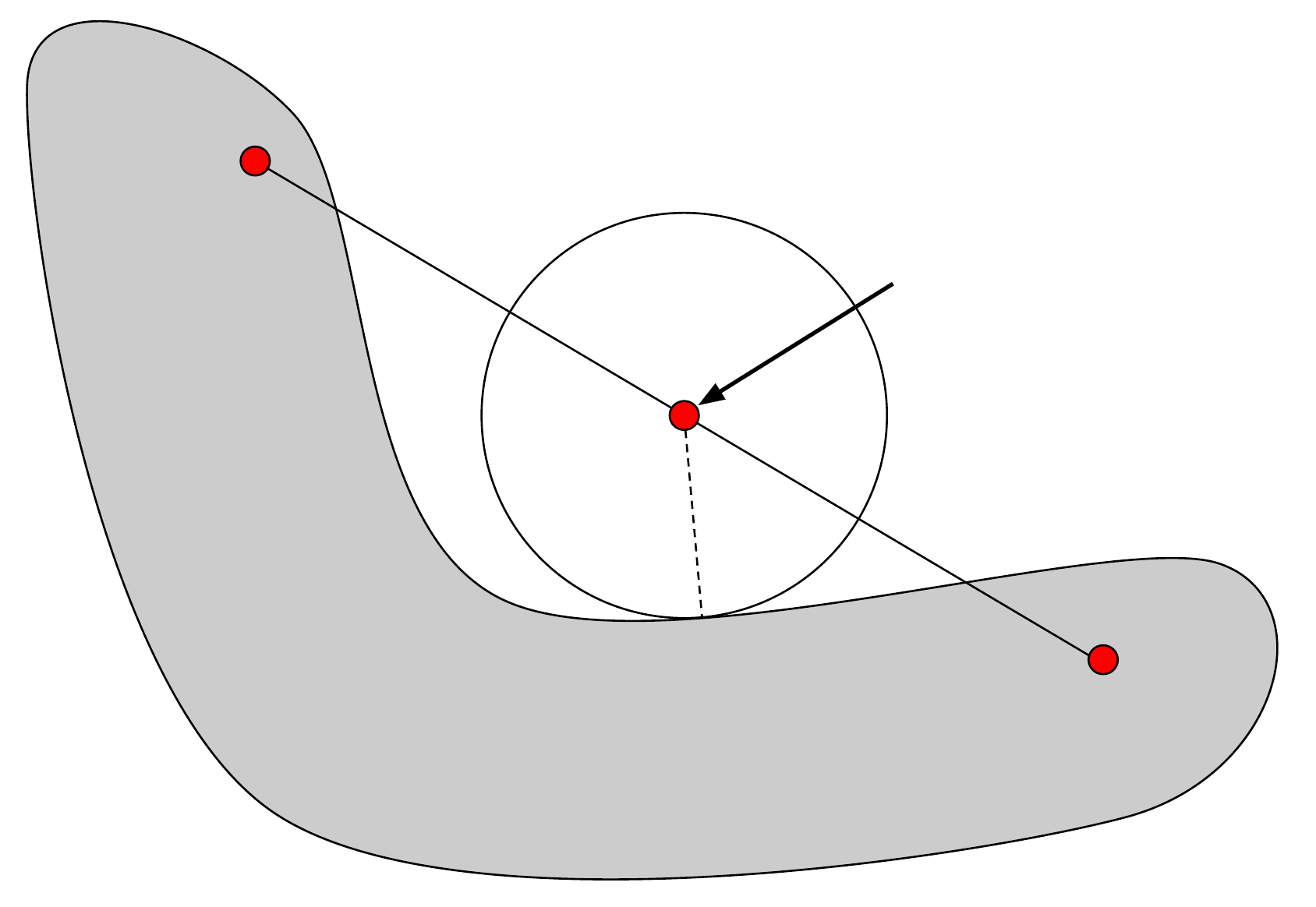}
\put(-79,43){$x$}
\put(-69,17){$A$}
\put(-134,80){$a_1$}
\put(-52,73){$\frac{a_1+a_2}2$}
\put(-23,20){$a_2$}
\caption{For $\beta\in[0,\infty)$, the $\beta$-reach of $A$ (see Definition~\ref{def:beta_reach}) is the largest lower bound of $g_{\norm{a_2-a_1}}(x)$ (see Definition~\ref{def:g_h_b}), for the pairs $a_1,a_2\in A$ satisfying $x\geq \beta$.}
\label{fig:beta_reach_def}
\end{figure}

\begin{defi}[The $\beta$-reach]\label{def:beta_reach}
For a closed set $A\subseteq\R^d$ and $\beta\in [0,\infty)$, let the \emph{$\beta$-reach} of $A$ be defined as
\begin{equation*}
\reach_\beta(A) := \inf\left\{g_{\norm{a_2-a_1}}(x) : a_1,a_2 \in A,\ x=\delta_A\left(\frac{a_1+a_2}2\right) \geq \beta\right\},
\end{equation*}
where $g_{\norm{a_2-a_1}}(x)$ is defined in~\eqref{eqn:g_ell}. Recall that $\delta_A:\R^d\to \R$ maps each point in $\R^d$ to its distance from $A$. See Figure~\ref{fig:beta_reach_def} for a visual aid.
\end{defi}

By restricting to pairs of points $a_1,a_2\in A$ in Figure~\ref{fig:beta_reach_def} that yield $x\geq \beta$, the $\beta$-reach of $A$ is the largest lower bound of the resulting values of $g_{\norm{a_2-a_1}}(x)$. If one does not restrict the size of $x$ (\textit{i.e.}, for $\beta=0$), then this largest lower bound is precisely $\reach(A)$. This is formalized by the following theorem.

\begin{theo}\label{thm:equivalent_reach}
Let $A$ be closed in $\R^d$. The map $\beta \mapsto \reach_\beta(A)$ for $\beta\in \R^+$ is non-decreasing; moreover,
\begin{equation}\label{eqn:formulation_set_distance}
\lim_{\beta\searrow 0}\reach_{\beta}(A) = \reach_0(A) = \reach(A).
\end{equation}
\end{theo}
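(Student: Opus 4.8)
The plan is to dispatch the two easy assertions first and then concentrate on the equality $\reach_0(A)=\reach(A)$, which carries all the content. Monotonicity is immediate: for $0\le\beta_1\le\beta_2$ the admissible set $\{(a_1,a_2)\in A^2 : \delta_A((a_1+a_2)/2)\ge\beta_2\}$ is contained in the one for $\beta_1$, so the infimum defining $\reach_{\beta_2}(A)$ runs over fewer pairs and $\reach_{\beta_1}(A)\le\reach_{\beta_2}(A)$. Consequently $L:=\lim_{\beta\searrow0}\reach_\beta(A)=\inf_{\beta>0}\reach_\beta(A)$ exists and $L\ge\reach_0(A)$. For the reverse, I would note that any pair with $x=\delta_A((a_1+a_2)/2)=0$ contributes $g_{\norm{a_2-a_1}}(0)=\infty$ and hence does not affect $\reach_0(A)$, so $\reach_0(A)$ is the infimum of $g_{\norm{a_2-a_1}}(x)$ over pairs with $x>0$. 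Each such pair is admissible for every $\beta\in(0,x]$, giving $\reach_\beta(A)\le g_{\norm{a_2-a_1}}(x)$ and thus $L\le g_{\norm{a_2-a_1}}(x)$; infimizing over these pairs yields $L\le\reach_0(A)$, whence $L=\reach_0(A)$.

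For $\reach_0(A)\le\reach(A)$ I would invoke the medial-axis formula \eqref{eqn:medial_axis_reach}. Fix $z\in\mathcal{M}(A^c)$ with two distinct nearest points $p_1,p_2\in A$, and put $\rho=\delta_A(z)$, $\alpha=\norm{p_2-p_1}>0$, $m=(p_1+p_2)/2$. Since $B(z,\rho)^\circ\cap A=\emptyset$ and $\norm{z-m}=\sqrt{\rho^2-\alpha^2/4}<\rho$, the midpoint obeys $\delta_A(m)\ge\rho-\norm{z-m}=g_\alpha^{-1}(\rho)$, while always $\delta_A(m)\le\norm{m-p_1}=\alpha/2$. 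The triangle inequality gives $\rho\ge\alpha/2$, so both $\delta_A(m)$ and $g_\alpha^{-1}(\rho)$ lie in the domain $(0,\alpha/2]$ on which $g_\alpha$ is decreasing, and therefore $g_\alpha(\delta_A(m))\le\rho$. As $(p_1,p_2)$ is admissible with $\delta_A(m)>0$, this shows $\reach_0(A)\le\delta_A(z)$, and infimizing over $z$ gives $\reach_0(A)\le\reach(A)$.

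The inequality $\reach_0(A)\ge\reach(A)$ is the crux and the main obstacle. Writing $R=\reach(A)$ (the case $R=\infty$ being the convex one, where no pair has $\delta_A(m)>0$, and $R=0$ being trivial), I must show $g_{\norm{a_2-a_1}}(\delta_A(m))\ge R$ for every pair with $x=\delta_A(m)>0$, where $\alpha=\norm{a_2-a_1}$. When $R\le\alpha/2$ this is automatic, since $g_\alpha\ge\alpha/2$ throughout its domain. When $R>\alpha/2$ we have $x\le\alpha/2<R$, and because $g_\alpha$ is decreasing it suffices to prove the midpoint bound $\delta_A(m)\le g_\alpha^{-1}(R)=R-\sqrt{R^2-\alpha^2/4}$. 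This is precisely the midpoint lemma of \cite{boissonnat2019} on which the construction is built. I would establish it from the rolling-ball characterization of positive reach: since $x<R$, the point $m$ has a unique nearest point $p\in A$, and placing $c=p+R(m-p)/\norm{m-p}$ yields $\delta_A(c)=R$, so $\norm{a_i-c}\ge R$ for $i=1,2$. The median identity $\norm{a_1-c}^2+\norm{a_2-c}^2=2\norm{m-c}^2+\tfrac12\alpha^2$ with $\norm{m-c}=R-x$ then forces $2(R-x)^2+\tfrac12\alpha^2\ge2R^2$, that is $(R-x)^2\ge R^2-\alpha^2/4$, which is the claimed bound. Combining the two inequalities gives $\reach_0(A)=\reach(A)$ and, with the first paragraph, completes the proof.
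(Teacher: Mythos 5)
Your proof is correct, and it differs from the paper's in one substantive way. For the monotonicity, the identity $\lim_{\beta\searrow 0}\reach_\beta(A)=\reach_0(A)$, and the inequality $\reach_0(A)\leq\reach(A)$, your arguments coincide with the paper's up to packaging: the paper extracts a point $p\in A_{\tilde r}$ with two nearest points directly from Definition~\ref{def:reach} and applies the same spherical-cap estimate $\delta_A(m)\geq g_\alpha^{-1}(\delta_A(p))$, whereas you route the identical construction through the medial-axis formula~\eqref{eqn:medial_axis_reach}; both are fine, and your observation that $\rho\geq\alpha/2$ keeps everything inside the domain where $g_\alpha$ is decreasing is the same care the paper takes implicitly. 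The genuine divergence is in the crux inequality $\reach_0(A)\geq\reach(A)$: the paper simply cites Lemma~1 of \cite{boissonnat2019} for the bound $\reach(A)\leq g_{\norm{a_2-a_1}}\circ\delta_A\big(\frac{a_1+a_2}2\big)$, while you reprove that lemma from scratch via the case split on $R\lessgtr\alpha/2$, the AM--GM bound $g_\alpha\geq\alpha/2$, Federer's normal-ray property (the projection of $p+tu$ onto $A$ stays at $p$ for $t<\reach(A)$, so $\delta_A(c)=R$ by continuity), and the median identity. This buys a self-contained proof at the cost of importing a nontrivial fact from \cite[Theorem~4.8]{federer1959}, which you assert rather than justify; that is the only step I would ask you to cite explicitly, since $\delta_A(c)=R$ is exactly the content of the rolling-ball theorem and not a triviality. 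Everything else checks out, including the boundary cases $R=0$ and $R=\infty$.
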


\begin{proof}[Proof of Theorem~\ref{thm:equivalent_reach}]
The non-decreasing property is seen immediately via the inclusion
$$\left\{(a_1,a_2)\in A^2 : \delta_A\left(\frac{a_1+a_2}2\right) \geq \beta_2\right\} \subseteq \left\{(a_1,a_2)\in A^2 : \delta_A\left(\frac{a_1+a_2}2\right) \geq \beta_1\right\},$$
for all $\beta_1,\beta_2\in\R$ satisfying $\beta_1 < \beta_2$.

\begin{figure}[t]
\centering
\includegraphics[width=0.55\linewidth]{./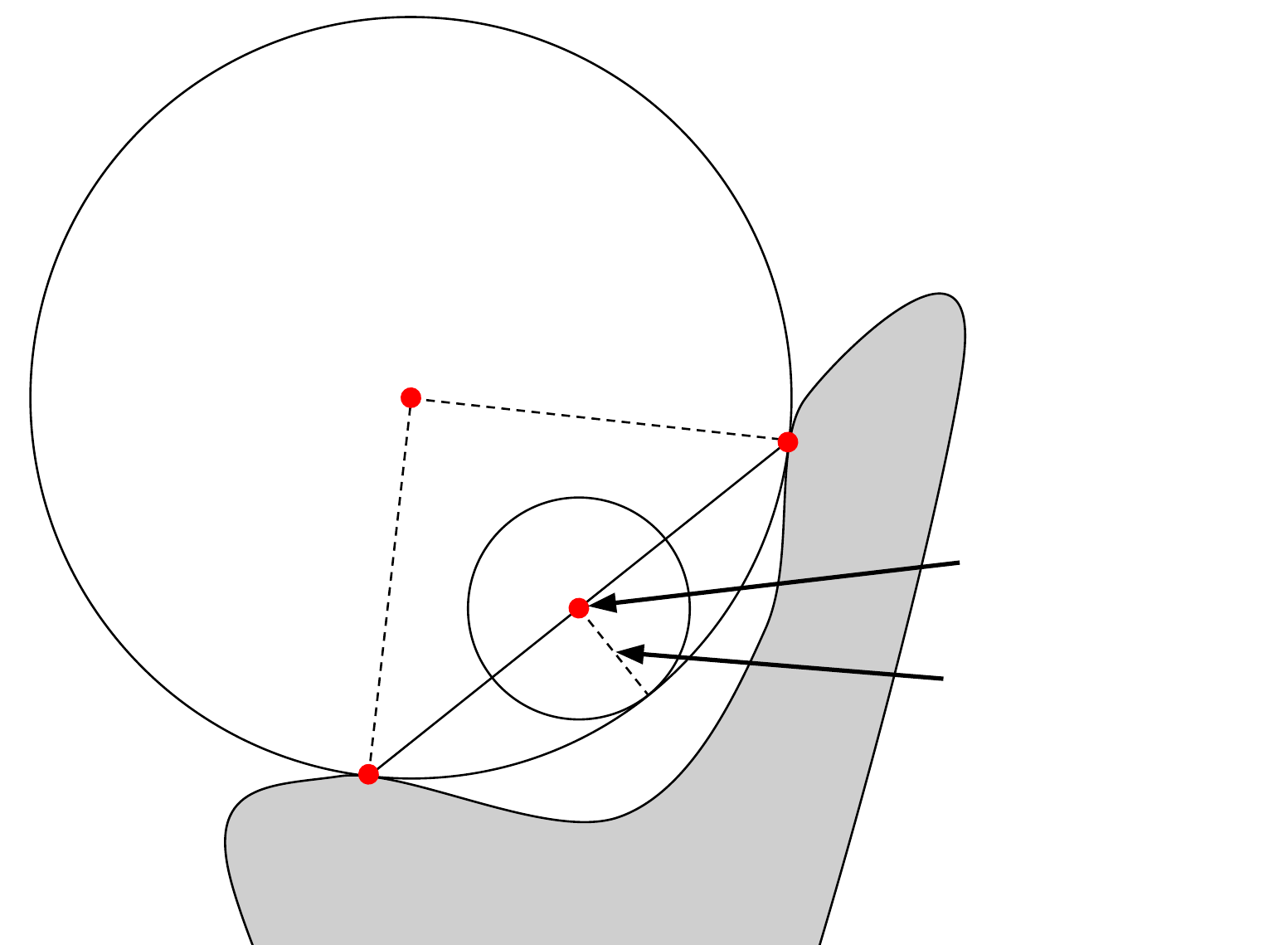}
\put(-136,80){$p$}
\put(-83,10){$A$}
\put(-137,13){$a_1$}
\put(-42,55){$\frac{a_1+a_2}2$}
\put(-69,70){$a_2$}
\put(-43,32){$g_{\norm{a_2-a_1}}^{-1}(\delta_A(p))$}
\caption{The interior of the large ball centered at $p$ does not intersect $A$, so the distance from the midpoint $(a_1 +a_2)/2$ to the set $A$ is at least $g_{\norm{a_2-a_1}}^{-1}(\delta_A(p))$. This construction is used in the proof of Theorem~\ref{thm:equivalent_reach}.}
\label{fig:proof_theorem_2}
\end{figure}

Now, we start by proving the second equality in~\eqref{eqn:formulation_set_distance}.
The proof is largely supplied by Lemma~1 of \cite{boissonnat2019} which provides
\begin{equation}\label{eqn:boissonnat_lemma_1}
\reach(A) \leq g_{\norm{a_2-a_1}}\circ\delta_A\left(\frac{a_1+a_2}2\right),
\end{equation}
for all $a_1, a_2 \in A$, since $g_\alpha(x)$ is non-increasing in $x$. What remains to show is that $\reach(A)$ is the largest lower bound in \eqref{eqn:boissonnat_lemma_1}; \textit{i.e.}, if $r > \reach(A)$ then there exist $a_1,a_2\in A$ such that $r$ exceeds the right-hand side of~\eqref{eqn:boissonnat_lemma_1}.
Let $r > \reach(A)$ and let $\tilde{r}\in (\reach(A),r)$.
By the definition of $\reach(A)$ (Definition~\ref{def:reach}), $\exists p \in A_{\tilde{r}}$ and $a_1,a_2\in A$ such that $\norm{a_1-p} = \norm{a_2 - p} = \delta_{A}(p)\leq \tilde{r}$. Since the interior of $B(p, \delta_A(p))$ does not intersect $A$, we have
$$\delta_A\left(\frac{a_1+a_2}2\right) \geq g_{\norm{a_2-a_1}}^{-1}(\delta_A(p)),$$
(see Figure~\ref{fig:proof_theorem_2}) and 
$$g_{\norm{a_2-a_1}}\circ \delta_A\left(\frac{a_1+a_2}2\right) \leq \delta_A(p) \leq \tilde r < r.$$
This proves that $\reach_0(A)=\reach(A)$.
\smallskip

Finally, we show the first equality in~\eqref{eqn:formulation_set_distance}.
The non-decreasing property gives $\lim_{\beta\searrow 0} \reach_\beta (A) \geq \reach_0(A)$. Now, it suffices to show the reverse inequality. For any $\epsilon > 0$, there exists $a_1,a_2\in A$ satisfying
$$g_{\norm{a_2-a_1}}\circ\delta_A\left(\frac{a_1+a_2}2\right) < \reach_0(A) +\epsilon.$$
Any such pair $(a_1,a_2)$ must satisfy $\delta_A\left(\frac{a_1+a_2}2\right) > 0$, and so for $\beta \in \left(0, \delta_A\left(\frac{a_1+a_2}2\right)\right)$, it holds that $\reach_\beta(A) < \reach_0(A) + \epsilon$. Thus, $\lim_{\beta\searrow 0}\reach_\beta(A) \leq \reach_0(A)$.
\end{proof}

\begin{rema}\label{rem:beta_reach_bounded_by_beta}
For any $\alpha\in[0,\infty)$ and $x\in [0,\alpha/2]$, one has $g_{\alpha}(x) \geq x$ (see Figure~\ref{fig:g_alpha_curves}). Thus, for any closed set $A$ and $\beta\in \R^+$,
\begin{equation*}
\reach_{\beta}(A) \geq \beta.
\end{equation*}
Intuitively, this means that the $\beta$-reach excludes small, reach-limiting features of the set that have scale less than $\beta$.

The values of $\beta$ for which $\reach_\beta(A)=\beta$ are the distances from the critical points of the generalized gradient function \citep{chazal2009} to the set $A$.
\end{rema}

\begin{rema}\label{rem:sdr}
Other generalizations of the reach are constructed similarly to the $\beta$-reach, in that they formulate the reach as an infimum or supremum over some set, and add or remove elements in the set using some real parameter (in our case, $\beta$). For example, \cite[Theorem~1]{boissonnat2019} expresses the reach of a set $A\subset\R^d$ as the supremum over a subset of $\R^+$ that satisfies a certain condition relating to $A$. By ``continuously'' weakening the condition with a parameter $\delta\in[0,\infty)$, the authors in \cite{aamari2022} introduce the \textit{spherical distortion radius} as the supremum of the larger subset of $\R^+$ satisfying the weaker condition parametrized by $\delta$. The spherical distortion radius is identified with the reach for $\delta=0$.
In an earlier work, \cite{chazal2009} introduces the \textit{$\mu$-reach} by considering the shortest distance from an element in the set to the \textit{$\mu$-medial axis}, a filtered version of the medial axis by considering regions where the generalized gradient function \citep{lieutier2004} is less than some $\mu\in (0,1]$.
The $\beta$-reach, the $\mu$-reach, and the spherical distortion radius, are all generalizations of the reach that exclude ``small-scale'' features as decided by the corresponding parameter $\beta$, $\mu$, or $\delta$.
\end{rema}

One significant advantage of the $\beta$-reach is its computability for high-dimensional point cloud data (see Section~\ref{sec:point_cloud_beta_reach}). In this setting, the formulation of the reach in terms of the $\beta$-reach for $\beta=0$ can also be used to construct an upper bound for the reach of any compact $A\subset\R^d$ (see Section~\ref{sec:point_cloud_reach}).
\smallskip


For a closed set $A\subseteq \R^d$, one can study the map $\beta\mapsto \reach_\beta(A)$ for $\beta\in [0,\infty)$, which we refer to as the \textit{$\beta$-reach profile} of the set $A$. We will see in Section~\ref{sec:point_cloud} that a set's $\beta$-reach profile provides pertinent information regarding the estimation of the reach from point cloud data, especially through its first-order approximation at $\beta=0$.

\begin{figure}[t]
    \centering
    \begin{subfigure}{0.45\textwidth}
        \centering
        \includegraphics[width=\textwidth]{./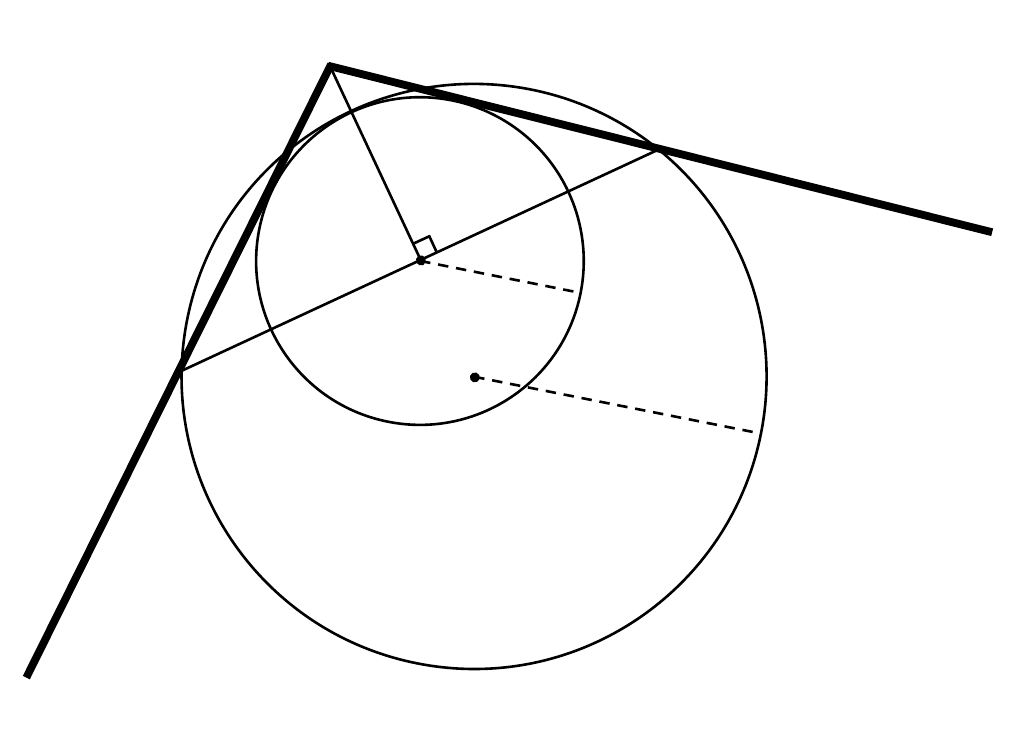}
        \put(-82,62){\small{$\beta$}}
		\put(-150,30){\small{$A$}}
		\put(-85,40){\small{$\reach_\beta(A)$}}
\caption{}
\label{fig:beta_reach_corner}
    \end{subfigure}
    \hfill
    \begin{subfigure}{0.45\textwidth}
        \centering
        \includegraphics[width=\textwidth]{./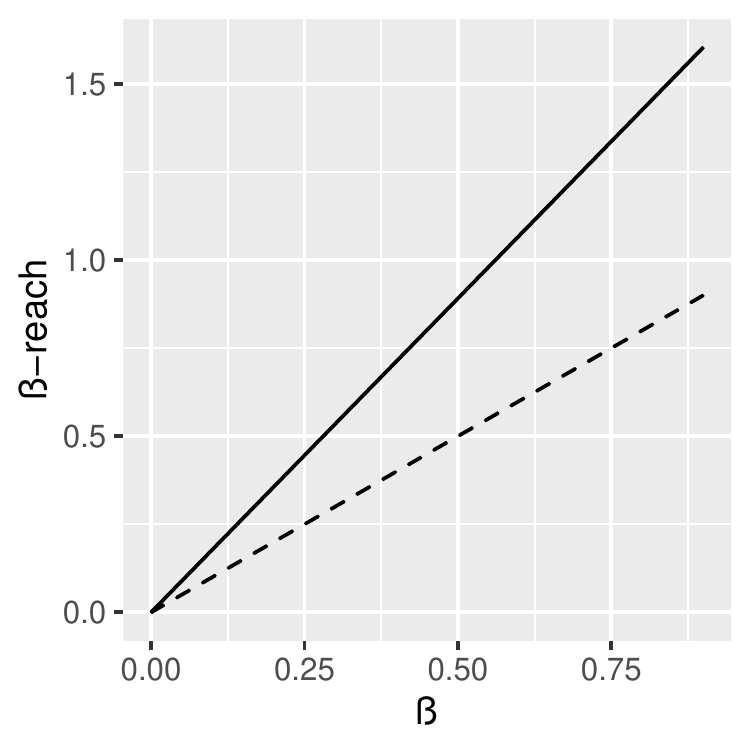}
        \caption{}
        \label{fig:two_segments_plot}
    \end{subfigure}
    \caption{\textbf{(a):} The construction of $\reach_\beta(A)$ for $\beta > 0$ with $A$, the union of two rays in $\R^d$ that originate from the same point. \textbf{(b):} The $\beta$-reach profile of $A$ shown as a solid line. The lower bound with slope 1 is shown as a dashed line.}
    \label{fig:two_segments}
\end{figure}

\begin{figure}[t]
    \centering
    \begin{subfigure}{0.47\textwidth}
        \centering
        \includegraphics[width=\textwidth]{./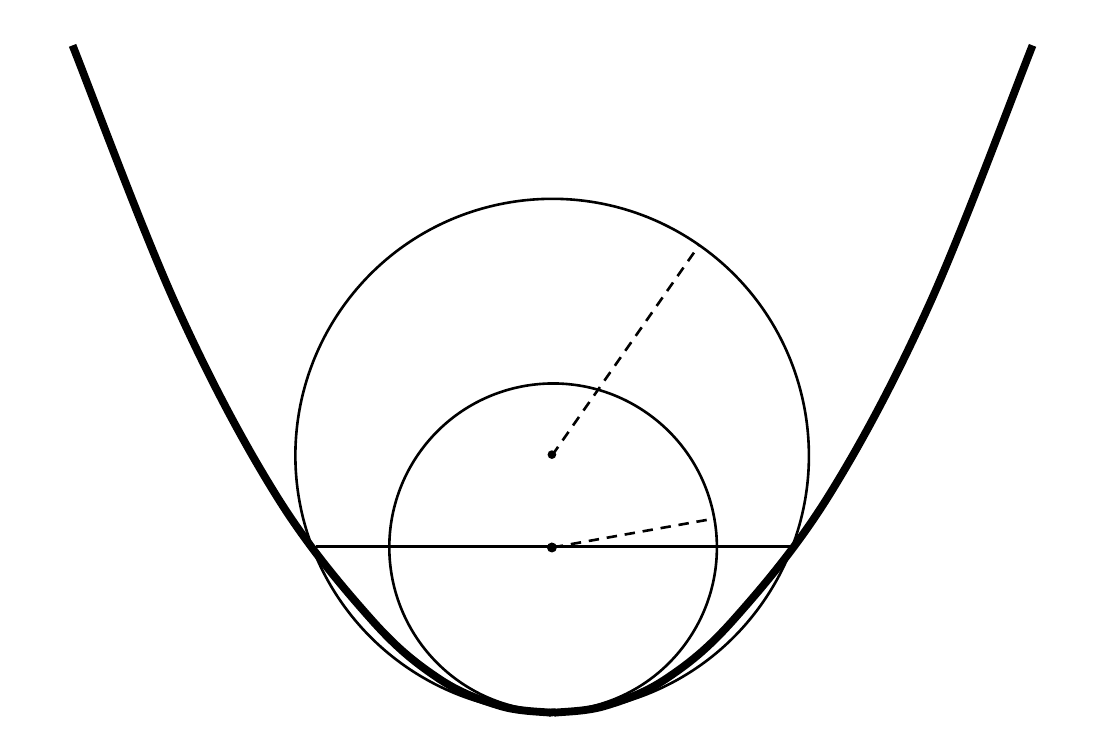}
        \put(-70,35){\small{$\beta$}}
		\put(-153,80){\small{$A$}}
		\put(-106,63){\small{$\reach_\beta(A)$}}
\caption{}
\label{fig:beta_reach_parabola}
    \end{subfigure}
    \hfill
    \begin{subfigure}{0.45\textwidth}
        \centering
        \includegraphics[width=\textwidth]{./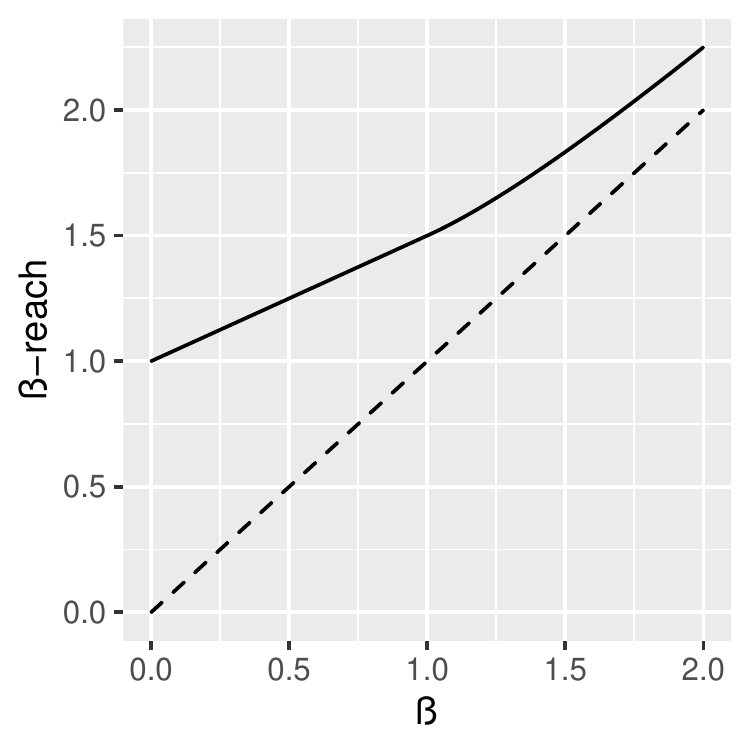}
        \caption{}
        \label{fig:parabola_plot}
    \end{subfigure}
    \caption{\textbf{(a):} The construction of $\reach_\beta(A)$ for $\beta > 0$ with $A$, a paraboloid embedded in $\R^d$. \textbf{(b):} The $\beta$-reach profile of $A$ shown as a solid line. The lower bound with slope 1 is shown as a dashed line.}
    \label{fig:parabola}
\end{figure}

Some exemplary sets and their $\beta$-reach profiles are considered in the several examples that follow.

\begin{exam}[The $\beta$-reach of a corner]\label{exa:line_segments}
For two line segments in $\R^d$ with ends joined by an angle $\theta \in (0,\pi)$, the $\beta$-reach of their union is $\frac\beta 2 \left(1 + \sec^2\left(\frac \theta 2\right)\right)$, for sufficiently small $\beta > 0$. See Figure~\ref{fig:two_segments} for an illustration.
\end{exam}

\begin{exam}[The $\beta$-reach of an arc]
The $\beta$-reach of a circular arc of angle at most $\pi$ is equal to the radius of the arc, for sufficiently small $\beta > 0$.
\end{exam}

\begin{exam}[The $\beta$-reach of a bottleneck structure]
If the reach of a set $A\subset\R^d$ is determined by a bottleneck structure like those described in \cite{aamari2019}, then $\reach_\beta(A) = \reach(A)$ for $\beta\in [0,\reach(A)]$. This is the case, for example, when $A$ is a finite set of points in $\R^d$. 
\end{exam}

\begin{exam}[The $\beta$-reach of a $C^2$-smooth curve]\label{exa:beta_reach_C2}
Let $h:\R\to\R$ be a $C^2$-smooth function with $h'(0) > 0$. Suppose that the graph of the function $f:[-1,1]\to\R$ defined by $f(x) := h(x^2)$ obtains its maximal curvature at $x=0$. Then, the graph $A:= \{(x,f(x)):-1 \leq x\leq 1\}\subset \R^2$ satisfies
\begin{equation}\label{eqn:example_reach}
\reach(A) = \frac{1}{2h'(0)}
\end{equation}
and
\begin{equation}\label{eqn:example_beta_reach}
\reach_\beta(A) = \frac{1}{2h'(0)} + \left(\frac 12 - \frac{h''(0)}{4h'(0)^3}\right)\beta + \littleo(\beta),
\end{equation}
for sufficiently small $\beta>0$.
Justifications for Equations~\eqref{eqn:example_reach} and~\eqref{eqn:example_beta_reach} are provided at the end of Section~\ref{sec:proofs}. Figure~\ref{fig:parabola} depicts a special case of this example with $h(x) = x/2$.
\end{exam}

\section{Methods for point cloud data}\label{sec:point_cloud}

In practice, one might be interested in identifying bounds on the reach and $r$-convexity of a set $A\subseteq\R^d$ given a discrete set of points that are known to reside in $A$.
The goal of this section is to provide computational methods for bounding the reach and the $r$-convexity from above, and producing diagnostics for possible approximations. The three main settings that we consider are:
\begin{enumerate}
\item[(a)] One has access to a point cloud that extends over $\R^d$ in the sense that $\R^d$ can be covered by balls of fixed radius $\epsilon$ centered at each point. Moreover, one knows the partition of the points that lie in $A\subseteq \R^d$, and those that lie in $A^c$. See section~\ref{sec:point_cloud_rconv} for a treatment of this setting.
\item[(b)] One has access to a set of points for which it is known that $A\subseteq \R^d$ can be covered by balls of fixed radius $\epsilon$ centered at each point. Here, $\epsilon$ is known. See section~\ref{sec:point_cloud_reach}.
\item[(c)] The set $A$ is a submanifold of $\R^d$ of dimension $m<d$. One has access to a set of points that is known to be contained in $A$. See section~\ref{sec:point_cloud_beta_reach}.
\end{enumerate}

The mathematical results that we present in this section hold in arbitrary dimension $d$. Nevertheless, the computational complexity of the method described in Section~\ref{sec:point_cloud_rconv} for setting~(a) increases quite drastically as higher dimensions are considered.

For the method that we present for setting~(b) described in Section~\ref{sec:point_cloud_reach}, its computation time depends on the ambient dimension only through the computation of distances in $\R^d$, which is linear in $d$. The algorithm is largely dependent on the number of points used, which may be large in high dimension to ensure small $\epsilon$.

The computational complexity of visualizing the $\beta$-reach profile of the manifold in setting (c) is also linear in the dimension of the ambient space, and so the algorithm that we describe in Section~\ref{sec:point_cloud_beta_reach} can adapt to large values of $d$. Contrast this with existing methods that aim to approximate the reach by first approximating the medial axis \cite{cuevas2014, chazal2005, chazal2009, dey2003, dey2006, lieutier2023}, where in high dimension, accessing the medial axis becomes computationally challenging. The relationship between $d$, $m$, $n$, and our method's performance is elaborated on in Remark~\ref{rem:relationship_n_d_m} in Section~\ref{sec:point_cloud_beta_reach}.


\subsection{An upper bound for the $r$-convexity of a set}\label{sec:point_cloud_rconv}

In this section, we introduce a method for identifying when sets are not $r$-convex, for $r>0$, with a true negative rate of 100\%. That is, given a set $A\subset \R^d$ that is observed over some grid of points (possibly lacking structure), we show how to correctly identify for which values of $r>0$ the set $A$ is \textit{certainly not} $r$-convex. The smallest of these values of $r$ provides an upper bound for $\rconv(A)$, and by Equation~\eqref{eqn:reach_inclusion}, an upper bound for $\reach(A)$. Moreover, if the sampling becomes dense in $\R^d$, we show that this upper bound converges to $\rconv(A)$.
\smallskip

Here, we define operations analogous to dilation and erosion, for discrete sets of points.
\begin{defi}\label{def:pc}
Let $\pc$ be a point cloud in $\R^d$, \textit{i.e.}, a countable subset of $\R^d$. For a set $\hat{A} \subseteq \pc$, we make a slight abuse of notation and denote for $r\in\R$,
$$\hat{A}_r := \begin{cases}
\{p\in \pc : \delta_{\hat{A}}(p) \leq r \}, &\mbox{for } r\geq 0,\\
\{a\in \hat{A} : \delta_{\pc\setminus\hat{A}}(a) > -r\}, &\mbox{for } r < 0.
\end{cases}$$
\end{defi}

\begin{rema}
We emphasize that for a point cloud $\pc$, a subset $\hat{A} \subseteq \pc$, and a real number $r\in\R$, Definition~\ref{def:pc} implies that $\hat{A}_r \subseteq \pc$. Contrast this with the set $\hat{A} \oplus B(0,\vert r\vert)$, which is not contained in $\pc$ for $r > 0$.
\end{rema}

\begin{figure}[t]
\centering
\includegraphics[width=\linewidth]{./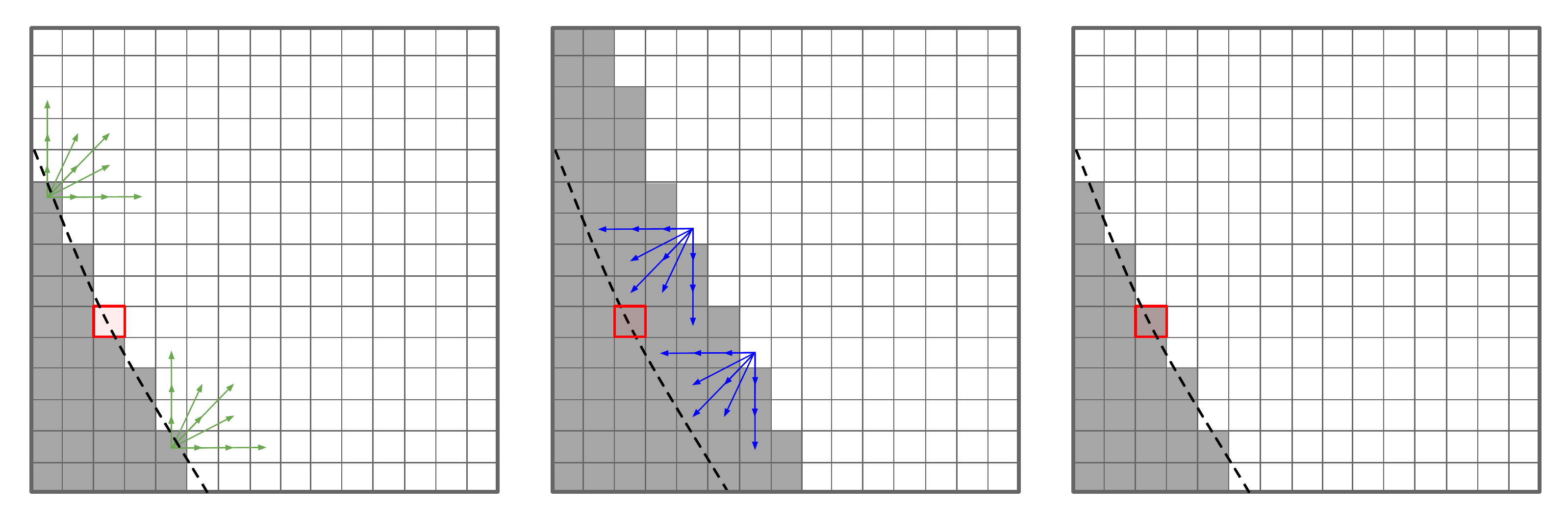}
\put(-60,-10){(c)}
\put(-177,-10){(b)}
\put(-286,-10){(a)}
\caption{An example where $(\hat{A}_r)_{-r}$ in (c) is strictly larger than $\hat{A}$ in (a), while the underlying set $A$ is $r$-convex. The boundary $\partial A$ is shown as a dashed line and $r$ is three times the pixel width.}
\label{fig:discrete_closing}
\end{figure}

Let us illustrate the operations in Definition~\ref{def:pc} using the example in Figure~\ref{fig:discrete_closing}. The set $A\subset\R^2$ occupies the bottom left corner of the domain, just until the dashed line. When sampled on a square lattice $\pc$, one obtains the image in panel (a); each pixel is centered on a point in $\pc$, and grey if the point lies in $A$. Dilating the grey pixels $\hat{A}$ by $r=3\times (pixel\ width)$, one obtains the grey pixels in panel (b), $\hat{A}_r$. If one then erodes $\hat{A}_r$ by $r$ (equivalent to dilating $\pc\setminus \hat{A}_r$ by $r$ and taking complements), one obtains the grey pixels in panel (c), $(\hat{A}_r)_{-r}$. Notice, however, that there is a point in $(\hat{A}_r)_{-r}$ that is not in $\hat{A}$. This might be surprising since we chose a set $A$ such that $A_{\bullet r} = A$. This illustration shows that naively testing for $r$-convexity using the discrete dilation and erosion operations \textit{mistakenly classifies sets as not $r$-convex} when indeed they are.

The following theorem shows how to correctly identify sets as not being $r$-convex, and gives an upper bound for the $r$-convexity of a set that is tight in some sense.

\begin{theo}\label{thm:rconv_estimator}
For $n\in\N^+$, let $\pc^{(n)}$ be a point cloud in $\R^d$, and suppose that
\begin{equation}\label{eqn:epsilon_n_rconv_estimator}
\epsilon_n := \sup\left\{\delta_{\pc^{(n)}}(q) : q\in\R^d\right\}
\end{equation}
is finite.
Let $A$ be a compact subset of $\R^d$, and for $n\in\N^+$, define $\hat{A}^{(n)} := A \cap \pc^{(n)}$ and the corresponding bound,
\begin{equation}\label{eqn:convexity_estimator}
\rnv{A}{n} := \inf\left\{r > \epsilon_n : (\hat{A}^{(n)}_{r-\epsilon_n})_{-(r+\epsilon_n)} \nsubseteq \hat{A}^{(n)}\right\}.
\end{equation}
It holds that 
\begin{equation}\label{eqn:rconv_bound}
\inf_{n\in\N^+}\rnv{A}{n} \geq \rconv(A).
\end{equation}
Moreover, if $\epsilon_n\to 0$ and $d_H(\hat{A}^{(n)},A)\to 0$ as $n\to\infty$, then
\begin{equation}\label{eqn:rconv_lim}
\lim_{n\to \infty}\rnv{A}{n} = \rconv(A).
\end{equation}
\end{theo}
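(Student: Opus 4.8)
The plan is to prove the two assertions separately, the inequality~\eqref{eqn:rconv_bound} and the limit~\eqref{eqn:rconv_lim}, both resting on a single comparison between the \emph{discrete} closing that defines $\rnv{A}{n}$ and the \emph{continuous} closing $A_{\bullet r}$. First I would establish, for every $r>\epsilon_n$, the inclusion
\begin{equation*}
(\hat{A}^{(n)}_{r-\epsilon_n})_{-(r+\epsilon_n)} \subseteq A_{\bullet r}\cap \pc^{(n)}.
\end{equation*}
Containment in $\pc^{(n)}$ is immediate from Definition~\ref{def:pc}. For containment in $A_{\bullet r}=(A_r)_{-r}$ I argue by contradiction: if $p$ lies in the left-hand set but $p\notin A_{\bullet r}$, there is a $y$ with $\norm{p-y}\le r$ and $\delta_A(y)>r$; picking $q\in\pc^{(n)}$ with $\norm{q-y}\le\epsilon_n$ and using $\hat{A}^{(n)}\subseteq A$ gives $\norm{q-a}\ge\delta_A(y)-\epsilon_n>r-\epsilon_n$ for all $a\in\hat{A}^{(n)}$, so $q\in\pc^{(n)}\setminus\hat{A}^{(n)}_{r-\epsilon_n}$, while $\norm{p-q}\le r+\epsilon_n$, contradicting $p\in(\cdot)_{-(r+\epsilon_n)}$. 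Notably this step uses only the covering radius $\epsilon_n$.

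For~\eqref{eqn:rconv_bound}, fix $r$ with $\epsilon_n<r<\rconv(A)$. A set expressible as the complement of a union of open $\sigma$-balls is also the complement of a union of open $r$-balls for any $r<\sigma$, so $r<\rconv(A)$ forces $A_{\bullet r}=A$; the inclusion then yields $(\hat{A}^{(n)}_{r-\epsilon_n})_{-(r+\epsilon_n)}\subseteq A\cap\pc^{(n)}=\hat{A}^{(n)}$. Hence $r$ is excluded from the set in~\eqref{eqn:convexity_estimator}, so $\rnv{A}{n}\ge r$; letting $r\nearrow\rconv(A)$ (the case $\rconv(A)\le\epsilon_n$ is trivial, as the infimum runs over $r>\epsilon_n$) gives $\rnv{A}{n}\ge\rconv(A)$ for every $n$, and then~\eqref{eqn:rconv_bound}.

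For~\eqref{eqn:rconv_lim}, the same inclusion already gives $\liminf_n\rnv{A}{n}\ge\rconv(A)$, so it remains to show $\limsup_n\rnv{A}{n}\le\rconv(A)$. I would fix $r>\rconv(A)$, pick $s\in(\rconv(A),r)$ so that $A_{\bullet s}\ne A$, and exhibit for all large $n$ a cloud point $p_n\in\pc^{(n)}\setminus\hat{A}^{(n)}$ lying in $(\hat{A}^{(n)}_{s-\epsilon_n})_{-(s+\epsilon_n)}$, which forces $\rnv{A}{n}\le s<r$. By Definition~\ref{def:pc}, this membership means (i) $\delta_{\hat{A}^{(n)}}(p_n)\le s-\epsilon_n$ and (ii) every $q\in\pc^{(n)}$ with $\norm{q-p_n}\le s+\epsilon_n$ has $\delta_{\hat{A}^{(n)}}(q)\le s-\epsilon_n$. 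Writing $h_n:=d_H(\hat{A}^{(n)},A)$ and using $\lvert\delta_{\hat{A}^{(n)}}-\delta_A\rvert\le h_n$, both (i) and (ii) are implied by the continuous condition $B(p_n,s+\epsilon_n)\subseteq A_{s-\epsilon_n-h_n}$ together with $\delta_A(p_n)>0$. Taking $p_n$ within $\epsilon_n$ of a suitable witness $z\in A_{\bullet s}\setminus A$ makes $p_n\notin\hat{A}^{(n)}$ automatic for large $n$, and (i)--(ii) should follow by absorbing the $O(\epsilon_n+h_n)$ errors into the gap between the dilation radius $s-\epsilon_n$ and the erosion radius $s+\epsilon_n$.

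The hard part will be guaranteeing that $A_{\bullet s}\setminus A$ contains a point $z$ that is \emph{robustly} filled, i.e.\ with $B(z,s)\subseteq\{\delta_A<s\}$, so that a whole ball of such points of radius exceeding $\epsilon_n$ survives; equivalently, that the interior of $A_{\bullet s}$ protrudes from $A$. This robustness is genuinely needed, not a convenience: condition (ii) must hold for an \emph{arbitrary} cloud meeting the hypotheses, and since an adversary may place cloud points anywhere in $A^c$, one cannot exploit thinness of $\{\delta_A>s-\epsilon_n-h_n\}$---only emptiness of $B(p_n,s+\epsilon_n)\cap\{\delta_A>s-\epsilon_n-h_n\}$ is safe. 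The subtlety is that a mere witness $z\in A_{\bullet\sigma}\setminus A$ only controls $\delta_A$ on $B(z,\sigma)$, and the Lipschitz extension of this control to the larger ball $B(z,s)$ degrades to a non-strict inequality; ruling out the degenerate (tangential) configurations where strictness fails is the crux. I expect to resolve this by using the freedom to choose $s$ anywhere in $(\rconv(A),r)$---discarding the exceptional radii at which the filled region is not full-dimensional---and by invoking compactness of $A$ to make the surviving ball, hence the detection, uniform for all large $n$.
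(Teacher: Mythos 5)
Your proof of the lower bound~\eqref{eqn:rconv_bound} is correct and is essentially the paper's argument: the inclusion $(\hat{A}^{(n)}_{r-\epsilon_n})_{-(r+\epsilon_n)} \subseteq A_{\bullet r}\cap \pc^{(n)}$, proved by picking a cloud point $q$ within $\epsilon_n$ of a witness $y\in (A_r)^c$, is exactly the contrapositive the paper runs for $r<\rconv(A)$.

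For the limit~\eqref{eqn:rconv_lim}, however, the step you yourself flag as ``the hard part'' is a genuine gap, and your proposed resolution does not close it. What you need is precisely Proposition~\ref{prp:open_subset_rconv} (Lemma~8.3 of \cite{rodriguezcasal2016}, reproved in Section~\ref{sec:proofs} via Lemma~\ref{lem:psi_equivalence}): for any $s>\rconv(A)$, the set $A_{\bullet s}\setminus A$ contains an \emph{open} subset of $\R^d$, hence a ball $B(z,\rho)$ with $\rho>0$. You have not proved this, and your plan --- ``discarding the exceptional radii at which the filled region is not full-dimensional'' and ``invoking compactness of $A$'' --- is not an argument: there is no a priori reason the set of bad radii is exceptional, and compactness alone does not produce an interior point of $A_{\bullet s}\setminus A$. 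Once that proposition is granted, the rest of your sketch does go through (fix $\sigma\in(\rconv(A),s)$, take $B(z,\rho)\subseteq A_{\bullet\sigma}\setminus A$, note that eroding by $2\epsilon_n+h_n$ leaves a ball of radius $\rho-2\epsilon_n-h_n\geq\epsilon_n$ inside $(A_{\sigma})_{-(\sigma+2\epsilon_n+h_n)}\cap A^c\subseteq(A_{s-\epsilon_n-h_n})_{-(s+\epsilon_n)}\cap A^c$ for large $n$, which must contain a cloud point), so the ``tangential/strictness'' worry you raise is a red herring: no strict sublevel-set condition $B(z,s)\subseteq\{\delta_A<s\}$ is needed, only the open ball plus the additivity of erosions and the monotonicity of $r\mapsto A_{\bullet r}$. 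The paper's own route differs slightly in bookkeeping --- it erodes the open set by $\delta_n+2\epsilon_n$, lands a cloud point $q$ in $(A_r)_{-(r+\delta_n+2\epsilon_n)}$, and shifts the test radius to $\tilde s=s+\delta_n+\epsilon_n$, concluding only $\rnv{A}{n}\leq r+\delta_n+\epsilon_n$ rather than $\rnv{A}{n}\leq s$ for fixed $s$; both versions yield the limit, but both stand or fall on Proposition~\ref{prp:open_subset_rconv}, which your proposal leaves unestablished.
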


\begin{figure}[t]
\centering
\includegraphics[width=0.8\linewidth]{./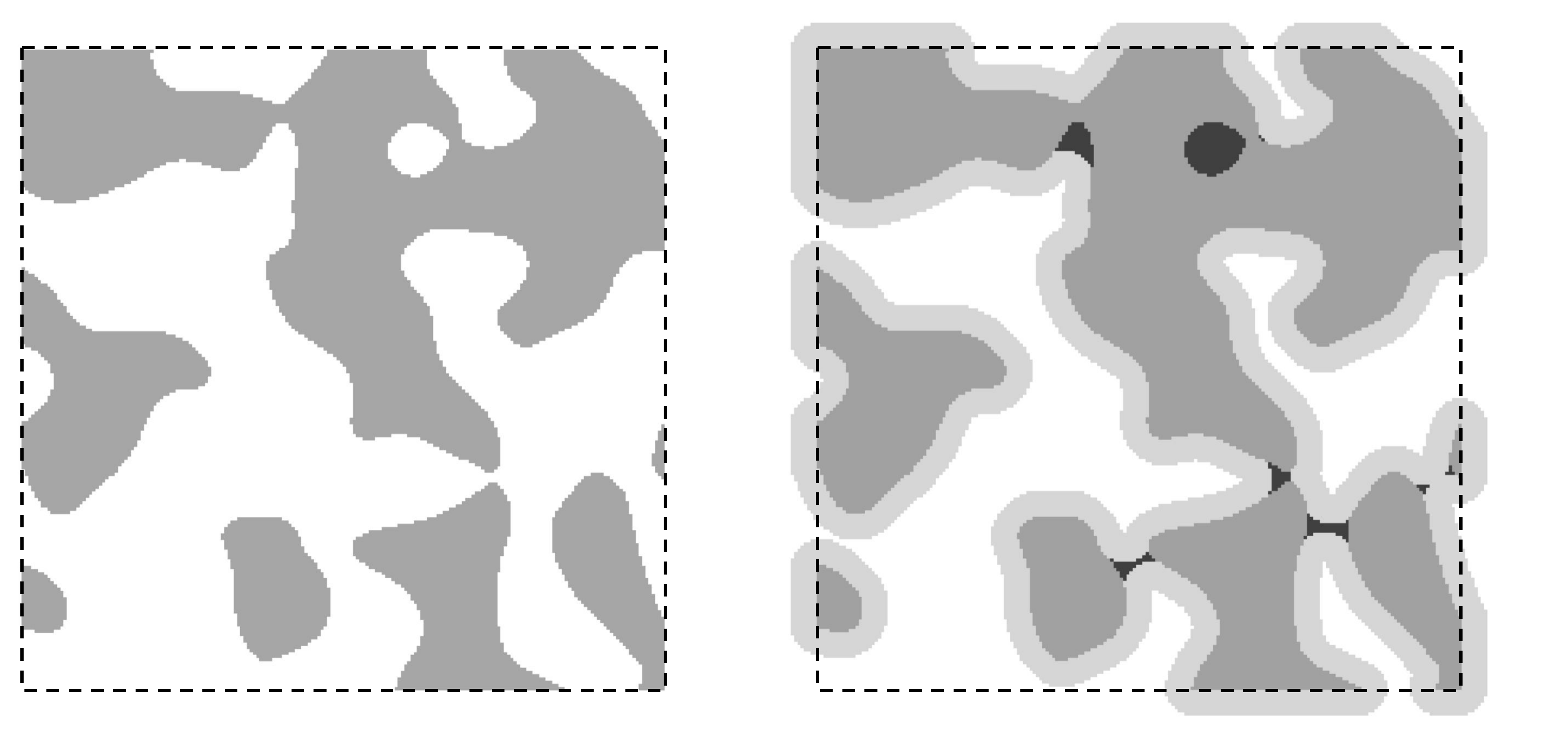}
\put(-80,-10){(b)}
\put(-220,-10){(a)}
\caption{\textbf{(a):} A set $A$ is shown as a pixelated image in grey. The set of grey pixel centers is $\hat A^{(n)}$. \textbf{(b):} The set $(\hat{A}^{(n)}_{r-\epsilon_n})_{-(r+\epsilon_n)}$ contains the dark grey pixels, which are not contained in $\hat A^{(n)}$. Thus, $r \geq \rconv(A)$. Here, $r$ is nine times the pixel width.}
\label{fig:corrected_reach_test}
\end{figure}

Theorem~\ref{thm:rconv_estimator} provides a method for correctly identifying which subsets of $\R^d$ are not $r$-convex. If a dilation of the the discretization $\hat{A}^{(n)}$ by $r-\epsilon_n$ followed by an erosion of $r+\epsilon_n$ produces a set that is not contained in $\hat{A}^{(n)}$, then $\rconv(A) \leq r$.
See Figure~\ref{fig:corrected_reach_test} for an example of a set $A$ for which $(\hat{A}^{(n)}_{r-\epsilon_n})_{-(r+\epsilon_n)} \nsubseteq \hat{A}^{(n)}$. The precise regions where $r$-convexity does not hold are highlighted by the method.

\begin{rema}
Note that $\rnv{A}{n}$ in~\eqref{eqn:convexity_estimator} can be computed using entirely available information, since $\epsilon_n$ in~\eqref{eqn:epsilon_n_rconv_estimator} is a feature of the point cloud $\pc^{(n)}$ and not of the unknown set $A$. In particular, there is no need to estimate $d_{H}(\hat{A}^{(n)},A)$ for the construction of $\rnv{A}{n}$. A binary search algorithm, along with the discrete dilation operations in Definition~\ref{def:pc}, are sufficient for the numerical calculation of $\rnv{A}{n}$.
\end{rema}

\begin{rema}
The requirement that $\pc^{(n)}$ extends over all of $\R^d$ (see Equation~\eqref{eqn:epsilon_n_rconv_estimator}) allows for a mathematical simplification and is not needed in practice. In real applications, one only requires that $d_H(\pc^{(n)},T) = \epsilon_n$ for some compact $T\subset\R^d$ that contains $A_{\rconv(A)+2\epsilon_n}$. In the case where $A$ is lower dimensional than the embedding space $\R^d$, one can add any finite number of points uniformly distributed on $T$ to $\pc^{(n)}$, and they will almost surely not belong to $A$.
\end{rema}

\begin{figure}[t]
\centering
\includegraphics[width=0.5\linewidth]{./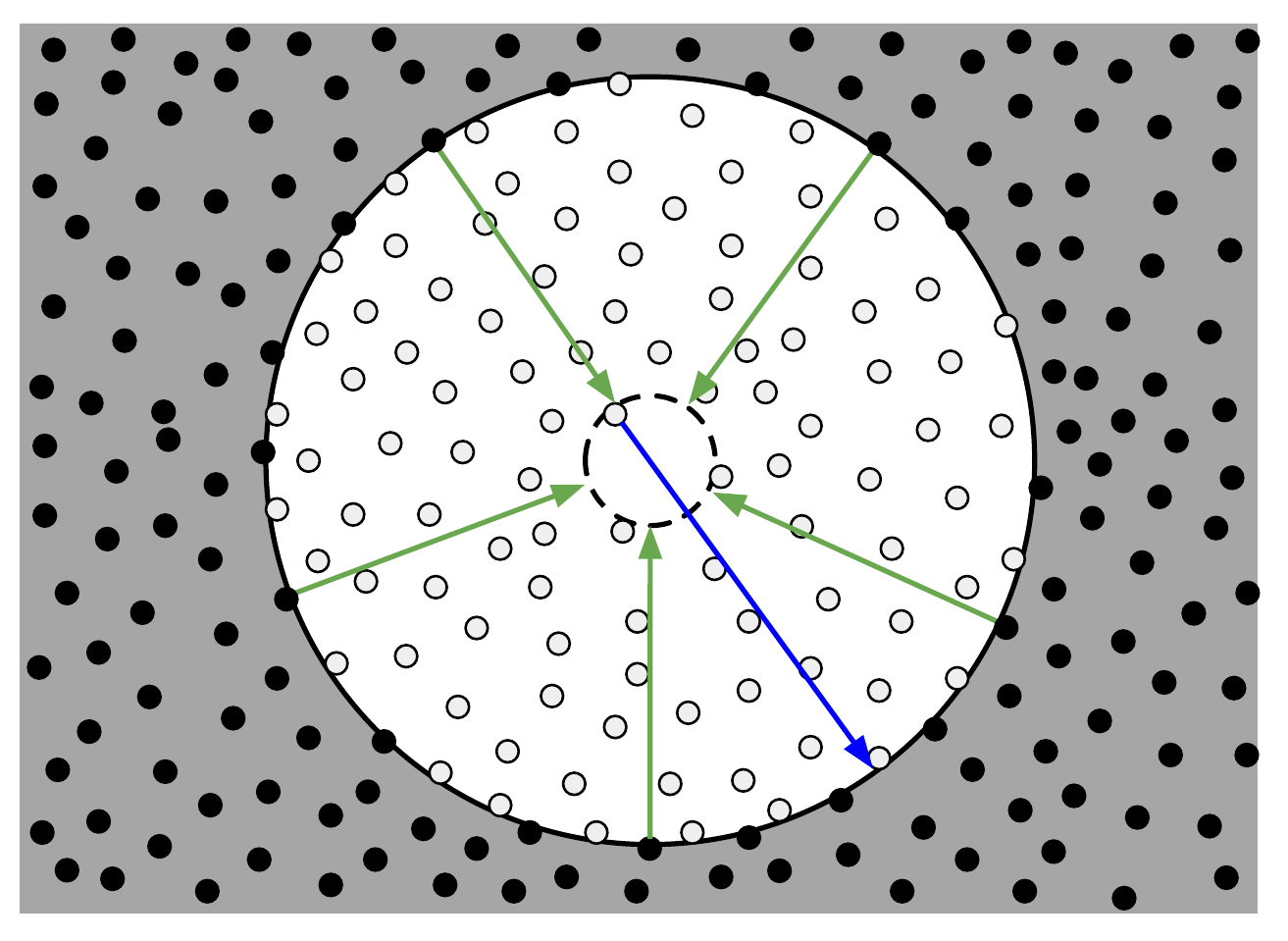}
\caption{A discrete dilation of the set $\hat A^{(n)}$ (black points) by the length of the green arrows ($\rconv(A) - \epsilon_n$) followed by a discrete erosion by the length of the longer blue arrow ($\rconv(A) + \epsilon_n$) leaves all of the white points out of the resulting set. The boundary of $A_{\rconv(A)-\epsilon_n}$ is shown as a dashed line.}
\label{fig:bad_case_reach}
\end{figure}

The proof of Theorem~\ref{thm:rconv_estimator} is quite technical, and so we postpone it to Section~\ref{sec:proofs}. Nevertheless, Figure~\ref{fig:bad_case_reach} provides an intuitive illustration that helps understand Equation~\eqref{eqn:rconv_bound}. This is an example of a worst-case scenario, in that a dilation of $\hat A^{(n)}$ by any more than $\rconv(A)-\epsilon_n$ results in all of $\pc^{(n)}$ being consumed. The dilation radius $\rconv(A)-\epsilon_n$ is maximal in the sense that, by dilating any less, there is guaranteed to be at least one element of $\pc^{(n)}$ that is not in the dilation. The distance between this one remaining element of $\pc^{(n)}$ and the other points in $\pc^{(n)}\setminus \hat{A}^{(n)}$ might be anywhere up to $r+\epsilon_n$, where $r-\epsilon_n \in(0, \rconv(A)-\epsilon_n)$ is the dilation radius. Therefore, an erosion by at least $r+\epsilon_n$ is necessary to guarantee that the resulting set is a subset of $\hat{A}^{(n)}$. In this sense, the bound in~\eqref{eqn:rconv_bound} is tight.

The following result from \cite{rodriguezcasal2016}, while interesting on its own, is instrumental in the proof of Equation~\eqref{eqn:rconv_lim}.

\begin{prop}[Lemma~8.3 in \cite{rodriguezcasal2016}]\label{prp:open_subset_rconv}
Let $r\in\R^+$ and let $A\subset \R^d$ be a closed set satisfying $\rconv(A) < r$. The set $A_{\bullet r} \setminus A$ contains an open subset of $\R^d$. 
\end{prop}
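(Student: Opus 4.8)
The plan is to reduce the statement to finding a single point of $A^c$ that lies in the interior of $A_{\bullet r}$. Since $A$ is closed, $A_{\bullet r}\setminus A = A_{\bullet r}\cap A^c$ with $A^c$ open, so $\mathrm{int}(A_{\bullet r}\setminus A) = \mathrm{int}(A_{\bullet r})\cap A^c$; hence it suffices to exhibit a point $y\in A^c$ with $y\in\mathrm{int}(A_{\bullet r})$, after which any sufficiently small ball about $y$ (of radius below both the interior radius and $\delta_A(y)>0$) lies in $A_{\bullet r}\setminus A$ and supplies the required open set.

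To produce a candidate $y$, I would exploit the strict gap $\rconv(A)<r$. Choosing $s$ with $\rconv(A)<s<r$, and using that $\rconv(A)=\sup\{\rho : A_{\bullet\rho}=A\}$ so that $s$ exceeds this supremum, we must have $A_{\bullet s}\neq A$; combined with $A\subseteq A_{\bullet s}$ from Lemma~\ref{lem:additivity_of_dilation}(b), there is a point $y\in A_{\bullet s}\setminus A$. The whole argument then hinges on transferring membership in the smaller closing $A_{\bullet s}$ into \emph{interior} membership in the larger closing $A_{\bullet r}$, using the slack $r-s>0$.

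The heart of the proof is to show $y\in\mathrm{int}(A_{\bullet r})$, and this is the step I expect to be delicate. Unwinding $A_{\bullet r}=(A_r)_{-r}$ together with $A_r=\{\delta_A\le r\}$ shows that $w\notin A_{\bullet r}$ exactly when there is a centre $c$ with $\norm{c-w}\le r$ and $\delta_A(c)>r$. Arguing by contradiction, suppose $y\notin\mathrm{int}(A_{\bullet r})$: then there are $z_n\to y$ with $z_n\notin A_{\bullet r}$, yielding centres $c_n$ with $\norm{c_n-z_n}\le r$ and $\delta_A(c_n)>r$. Since $\norm{c_n-y}\le r+\norm{z_n-y}\to r$, the $c_n$ are bounded, and a subsequence converges to some $c^*$ with $\norm{c^*-y}\le r$ and $\delta_A(c^*)\ge r$ (using that $\delta_A$ is $1$-Lipschitz). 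I would then split on $\norm{c^*-y}$. If $\norm{c^*-y}<r$, the genuine slack lets me place $c'$ on the segment from $y$ to $c^*$ with $\norm{c'-y}<s$ yet $\delta_A(c')>s$, the estimate $\delta_A(c')\ge\delta_A(c^*)-\norm{c'-c^*}$ doing the work; this contradicts $y\in A_{\bullet s}$, which forces $\delta_A(c')\le s$.

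The genuinely awkward case is the boundary situation $\norm{c^*-y}=r$, which is where the open/closed-ball and strict/non-strict distinctions bite. Here I would look at the point $w^*$ at distance $s$ from $y$ toward $c^*$: membership $y\in A_{\bullet s}$ forces $\delta_A(w^*)\le s$, while $\delta_A(w^*)\ge\delta_A(c^*)-\norm{c^*-w^*}\ge r-(r-s)=s$, so $\delta_A(w^*)=s$. Chasing equality through $r\le\delta_A(c^*)\le\norm{c^*-a^*}\le\norm{c^*-w^*}+\norm{w^*-a^*}=r$ for a nearest point $a^*\in A$ of $w^*$ (the hypothesis $\rconv(A)<r$ rules out $A=\emptyset$) forces collinearity and $\norm{c^*-a^*}=r$, which pins $a^*$ to the point of the sphere of radius $r$ about $c^*$ in the direction of $y$---namely $y$ itself. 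This contradicts $y\notin A$. With both cases excluded, $y\in\mathrm{int}(A_{\bullet r})\cap A^c$, and the reduction in the first paragraph finishes the proof. I would emphasise that this route never needs monotonicity of the closing in $r$ nor the ``complement of a union of open balls'' description, both of which are themselves subtle precisely at the sphere; everything is extracted directly from the definition of $A_{\bullet r}$ and the Lipschitz distance function.
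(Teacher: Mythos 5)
Your proof is correct, and while it shares the paper's skeleton---pick $s\in(\rconv(A),r)$, extract a witness $y\in A_{\bullet s}\setminus A$ via Lemma~\ref{lem:additivity_of_dilation}(b), and use the slack $r-s$ to upgrade this to interior membership in $A_{\bullet r}$---the upgrade step is carried out by a genuinely different mechanism. The paper works with the characterization of $A_{\bullet r}$ in Lemma~\ref{lem:psi_equivalence} and proves a direct quantitative containment: for $\delta$ small enough, every closed ball of radius $r$ meeting $B(p,\delta)$ contains a region of the form $B(p+x,\tilde r)\setminus B(p,\epsilon)$, which must meet $A$; this exhibits the open ball $B(p,\delta)\subseteq A_{\bullet r}\setminus A$ explicitly. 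You instead argue by contradiction, extracting a limit centre $c^*$ by compactness and the $1$-Lipschitz property of $\delta_A$, and dispose of the boundary case $\norm{c^*-y}=r$ by chasing equality through the triangle inequality, which rigidly pins the nearest point $a^*$ to $y$ itself and contradicts $y\notin A$. Your route avoids the paper's somewhat delicate geometric nesting claim (asserted there without full detail) at the price of being non-constructive about the radius of the resulting open set, whereas the paper's argument produces an explicit $\delta$. One small quibble: your characterization that $w\notin A_{\bullet r}$ exactly when there is a centre $c$ with $\norm{c-w}\le r$ and $\delta_A(c)>r$ is precisely the complement of the statement of Lemma~\ref{lem:psi_equivalence}, so the ``complement of a union of closed balls'' picture is in fact doing the same work for you as it does for the paper, contrary to your closing remark.
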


The relationship between Proposition~\ref{prp:open_subset_rconv} and Equation~\eqref{eqn:rconv_lim} is that, for $r>\rconv(A)$, as the point cloud $\pc^{(n)}$ becomes more dense in $\R^d$, the open subset in $A_{\bullet r} \setminus A$ fills with points that remain in $(\hat{A}^{(n)}_{r-\epsilon_n})_{-(r+\epsilon_n)}$ for $\epsilon_n$ sufficiently small. Thus,
\begin{equation*}
\lim_{n\to \infty}\rnv{A}{n} < r,
\end{equation*}
and since $r\in(\rconv(A),\infty)$ is arbitrary, the limit is at most $\rconv(A)$. Equality then holds by Equation~\eqref{eqn:rconv_bound}. The full proof of Theorem~\ref{thm:rconv_estimator} and an alternative proof of Proposition~\ref{prp:open_subset_rconv} are provided in Section~\ref{sec:proofs}.

\begin{rema}\label{rem:no_rate_rconv}
Equation~\eqref{eqn:rconv_lim} is provided without a rate of convergence. This is due to the fact that Proposition~\ref{prp:open_subset_rconv} provides no guarantees on the size of the open subset that can be found in $A_{\bullet r}\setminus A$ for $r > \rconv(A)$. To provide a rate of convergence, a deeper analysis is needed to understand the rate at which the size of the largest open ball in $A_{\bullet r}\setminus A$ decreases as $r \searrow \rconv(A)$. Conversely, the bound that we construct for the reach in Section~\ref{sec:point_cloud_reach} converges to the reach at a known rate (see Theorem~\ref{thm:reach_estimator} below).
\end{rema}


\subsection{An upper bound for the reach of a set}\label{sec:point_cloud_reach}

We have already seen that $\rnv An$ in~\eqref{eqn:convexity_estimator} is an upper bound for the reach by Equations~\eqref{eqn:reach_inclusion} and~\eqref{eqn:rconv_bound}. In this section, we introduce another computable bound for the reach based on the expression for the $\beta$-reach in Definition~\ref{def:beta_reach}. The context in which one can apply this bound was introduced as setting (b) at the start of Section~\ref{sec:point_cloud}: a countable set of points $\hat{A}^{(n)}$ is known to be included in a compact set $A\subset\R^d$, and its Hausdorff distance to $A$ is known to be at most $\epsilon_n$.

A very weak regularity condition is imposed on the set $A$ in terms of its $\beta$-reach profile near 0; it is used to prove the rate at which our upper bound converges to $\reach(A)$ as $\hat{A}^{(n)}\to A$ in the Hausdorff metric.

\begin{assu}\label{ass:beta_reach_increasing}
Suppose that the set $A\subset\R^d$ is compact, and that there exists $\delta > 0$ such that the map $\beta \mapsto \reach_\beta(A)$ for $\beta\in \R$ is either constant or strictly increasing on $[0,\delta]$. In addition, suppose that
\begin{equation}\label{eqn:K_A}
K_A := \lim_{\beta\searrow 0}\frac{\reach_{\beta}(A) - \reach(A)}{\beta}
\end{equation}
exists and is finite.
\end{assu}

\begin{figure}[t]
    \centering
    \begin{subfigure}{0.45\textwidth}
        \centering
        \includegraphics[width=\textwidth]{./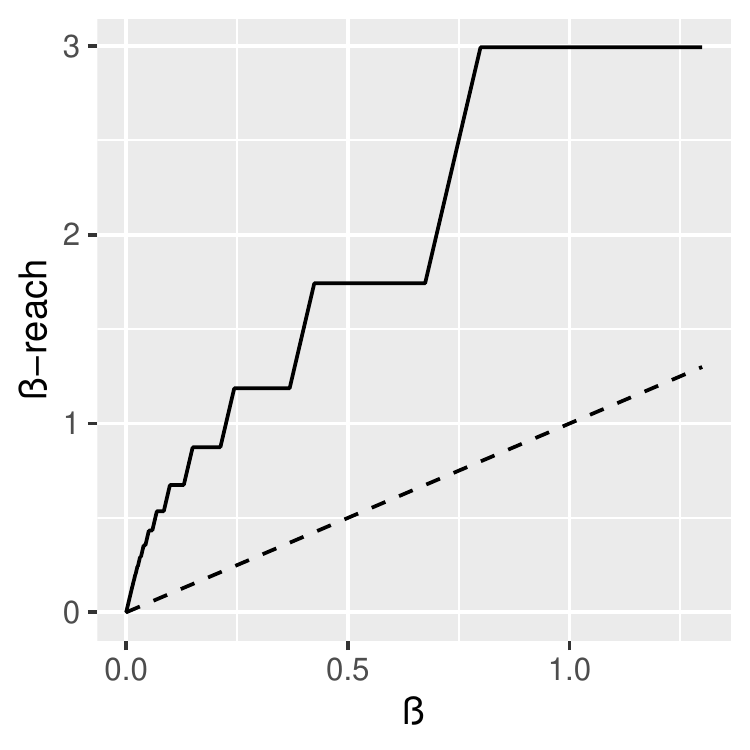}
        \caption{}
        \label{fig:maliscious_steps}
    \end{subfigure}
    \hfill
    \begin{subfigure}{0.45\textwidth}
        \centering
        \includegraphics[width=\textwidth]{./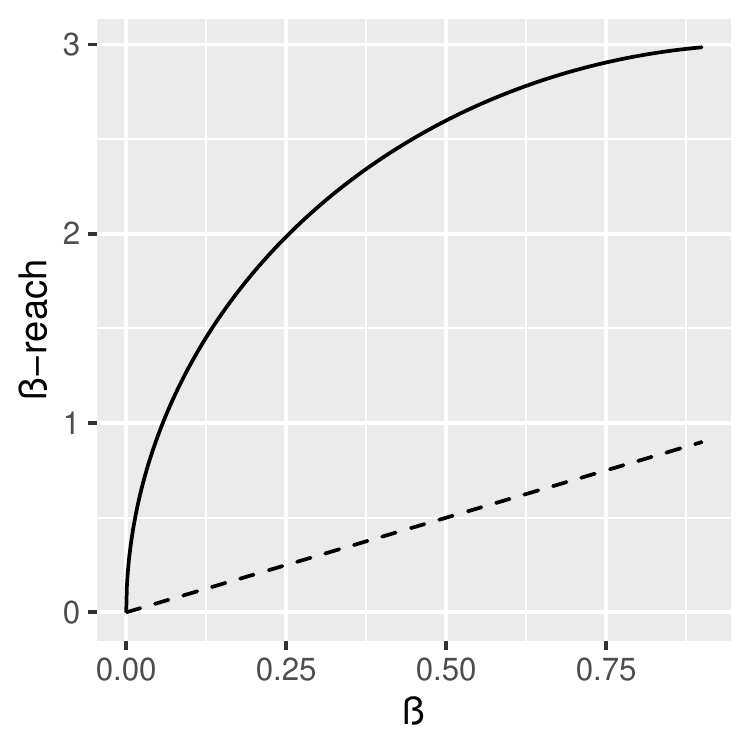}
        \caption{}
        \label{fig:maliscious_round}
    \end{subfigure}
    \caption{Hypothetical $\beta$-reach profiles of sets that would \textit{not} satisfy Assumption~\ref{ass:beta_reach_increasing}. \textbf{(a):} The $\beta$-reach profile is neither constant nor strictly increasing on any neighbourhood of 0. \textbf{(b):} The limit in~\eqref{eqn:K_A} is infinite.}
    \label{fig:maliscious}
\end{figure}

Assumption~\ref{ass:beta_reach_increasing} is imposed on the $\beta$-reach of $A$, as opposed to the regularity of $\partial A$, since the $\beta$-reach lends very naturally to the proof of Theorem~\ref{thm:reach_estimator}. Moreover, Assumption~\ref{ass:beta_reach_increasing} allows for most compact sets in $\R^d$, excluding some pathological counterexamples such as hypothetical sets admitting the $\beta$-reach profiles in Figure~\ref{fig:maliscious}. Assumption~\ref{ass:beta_reach_increasing} is seen to hold in all of Examples~\ref{exa:line_segments} through~\ref{exa:beta_reach_C2} in Section~\ref{sec:definitions_reach_beta_reach}.

\begin{theo}\label{thm:reach_estimator}
Let $A\subseteq\R^d$ be closed. For each $n\in\N^+$, let $\hat{A}^{(n)}$ be a countable subset of $A$. Suppose that $\hat A^{(n)}\to A$ in the Hausdorff metric. \textit{i.e.}, there is a sequence $(\epsilon_n)_{n\geq 1}$ in $\R^+$ tending to 0 as $n\to \infty$ such that $\epsilon_n \geq d_H(\hat{A}^{(n)},A)$ for all $n\in\N^+$.
For each $n\in\N^+$, define the corresponding bound on $\reach(A)$ by
\begin{align}\label{eqn:reach_estimator}
\rch{A}{n} := \inf\bigg\{g_{\norm{a_2-a_1}}(x-\epsilon_n) :\ & a_1,a_2 \in \hat{A}^{(n)},\\
&x=\delta_{\hat{A}^{(n)}}\left(\frac{a_1+a_2}2\right) \geq \epsilon_n\bigg\},\nonumber
\end{align}
where $g_{\norm{a_2-a_1}}$ is defined in~\eqref{eqn:g_ell}.
Then, 
\begin{equation}\label{eqn:rch_bound}
\rch{A}{n} \geq \reach(A),
\end{equation}
for $n\in\N^+$. Furthermore, if $A$ satisfies Assumption~\ref{ass:beta_reach_increasing} and is not convex, then there exists $n_0\in\N^+$ such that
\begin{equation}\label{eqn:rch_converge_rate}
\rch{A}{n} - \reach(A) \leq (2\,\reach(A) + K_A)\sqrt{\epsilon_n},
\end{equation}
for all $n\geq n_0$, where $K_A$ is defined as in~\eqref{eqn:K_A}.
\end{theo}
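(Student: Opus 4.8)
The plan is to establish the two claims separately: the lower bound \eqref{eqn:rch_bound} follows directly from Theorem~\ref{thm:equivalent_reach}, whereas the rate \eqref{eqn:rch_converge_rate} calls for a quantitative perturbation argument in which the estimator $\rch{A}{n}$ is fed a cloud pair approximating a near-minimiser of $\reach_{\beta_n}(A)$ for a carefully tuned $\beta_n\searrow 0$.

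For \eqref{eqn:rch_bound}, I would fix an admissible pair $a_1,a_2\in\hat A^{(n)}$, write $m=(a_1+a_2)/2$, and use $\hat A^{(n)}\subseteq A$ together with $d_H(\hat A^{(n)},A)\leq\epsilon_n$. The density bound gives $\delta_{\hat A^{(n)}}(m)\leq\delta_A(m)+\epsilon_n$, so the shifted sagitta satisfies $\delta_{\hat A^{(n)}}(m)-\epsilon_n\leq\delta_A(m)$; moreover $a_1\in A$ forces $\delta_A(m)\leq\norm{a_2-a_1}/2$, so both values lie in the range on which $g_{\norm{a_2-a_1}}$ is non-increasing. Hence $g_{\norm{a_2-a_1}}(\delta_{\hat A^{(n)}}(m)-\epsilon_n)\geq g_{\norm{a_2-a_1}}(\delta_A(m))\geq\reach(A)$, the last step being \eqref{eqn:boissonnat_lemma_1} applied to $a_1,a_2\in A$. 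Taking the infimum over admissible pairs yields \eqref{eqn:rch_bound}.

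For the rate, non-convexity ensures $\reach(A)<\infty$ and that for small $\beta$ there exist pairs of positive midpoint-distance, so $\reach_{\beta_n}(A)<\infty$ and, by compactness of the feasible set and continuity of the objective when $\beta_n>0$, a minimiser $(a_1^\ast,a_2^\ast)$ exists with $x:=\delta_A(m^\ast)\geq\beta_n$. Choosing $\hat a_i\in\hat A^{(n)}$ with $\norm{\hat a_i-a_i^\ast}\leq\epsilon_n$, the cloud midpoint obeys $\norm{\hat m-m^\ast}\leq\epsilon_n$, the segment length changes by at most $2\epsilon_n$, and $\delta_{\hat A^{(n)}}(\hat m)\geq x-\epsilon_n\geq\beta_n-\epsilon_n\geq\epsilon_n$ once $\beta_n\geq 2\epsilon_n$, so the pair is admissible. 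Monotonicity of $g$ in both arguments then bounds $\rch{A}{n}$ by $g_{\alpha+2\epsilon_n}(x-2\epsilon_n)$ with $\alpha=\norm{a_2^\ast-a_1^\ast}$, and I would expand its excess over $\reach_{\beta_n}(A)=g_{\alpha}(x)$. Substituting $\alpha^2=8x\,\reach_{\beta_n}(A)-4x^2$, the dominant contribution is the sagitta shrinkage, equal to $\tfrac{2\epsilon_n\reach_{\beta_n}(A)}{x}+\bigo(\epsilon_n)\leq\tfrac{2\epsilon_n\reach(A)}{\beta_n}+\littleo(\sqrt{\epsilon_n})$, whereas the segment-length growth contributes only $\bigo(\epsilon_n/\sqrt{\beta_n})$. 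Adding $\reach_{\beta_n}(A)-\reach(A)=K_A\beta_n+\littleo(\beta_n)$ and taking $\beta_n\asymp\sqrt{\epsilon_n}$ balances the two leading terms at $(2\reach(A)+K_A)\sqrt{\epsilon_n}$.

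The hard part is the bookkeeping of the lower-order terms in this expansion. At the exact scaling $\beta_n=\sqrt{\epsilon_n}$ the two principal terms $2\reach(A)\sqrt{\epsilon_n}$ and $K_A\sqrt{\epsilon_n}$ already exhaust the target constant, so the positive lower-order remainder (chiefly the segment-length penalty $\bigo(\epsilon_n/\sqrt{\beta_n})=\bigo(\epsilon_n^{3/4})$) would overshoot. I would absorb it by instead setting $\beta_n=t\sqrt{\epsilon_n}$ with a fixed $t$ displaced from $1$ so that $\tfrac{2\reach(A)}{t}+K_A t<2\reach(A)+K_A$; the resulting $\Theta(\sqrt{\epsilon_n})$ slack then dominates every $\littleo(\sqrt{\epsilon_n})$ remainder once $n\geq n_0$. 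The constant branch of Assumption~\ref{ass:beta_reach_increasing} ($K_A=0$ with the minimiser's sagitta bounded away from $\beta_n$) is treated separately and more easily, the whole excess being $\bigo(\epsilon_n)$; the only genuinely delicate configuration is the borderline $K_A=2\reach(A)$, where no displacement of $t$ creates slack and a finer, sign-aware estimate of the remainder is required.
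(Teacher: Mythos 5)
Your proposal follows essentially the same route as the paper's proof. Part one is the identical restriction-of-the-infimum argument: $\delta_{\hat A^{(n)}}(m)-\epsilon_n\leq\delta_A(m)$ by the triangle inequality, monotonicity of $g_{\norm{a_2-a_1}}$ on $[0,\norm{a_2-a_1}/2]$, and $\reach_0(A)=\reach(A)$ from Theorem~\ref{thm:equivalent_reach}. Part two is the same perturbation of a minimiser of $\reach_{\beta_n}(A)$ at scale $\beta_n\asymp\sqrt{\epsilon_n}$: your ``constant branch'' (infimum attained at a pair with sagitta bounded away from $0$, excess $\bigo(\epsilon_n)$) is the paper's Case~1, and your strictly-increasing branch with the substitution $\alpha^2=8x\,\reach_{\beta_n}(A)-4x^2$ reproduces the paper's Case~2 computation almost line for line (the paper additionally pins the minimiser's sagitta to exactly $\beta_n$ via Assumption~\ref{ass:beta_reach_increasing}, which your bound $x\geq\beta_n$ lets you avoid). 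The one place you go beyond the paper is your concern about the sign of the $\littleo(\sqrt{\epsilon_n})$ remainder. That concern is legitimate: the paper's own proof terminates at $\rch{A}{n}\leq\reach(A)+(2\,\reach(A)+K_A)\sqrt{\epsilon_n}+\littleo(\sqrt{\epsilon_n})$ and does not check that the remainder is eventually non-positive, so the clean inequality~\eqref{eqn:rch_converge_rate} is, strictly speaking, only established up to a $(1+o(1))$ factor on the constant. Your patch---taking $\beta_n=t\sqrt{\epsilon_n}$ with $2\,\reach(A)/t+K_A t<2\,\reach(A)+K_A$---works whenever $K_A\neq 2\,\reach(A)$, and you correctly identify that the convexity of $t\mapsto 2\,\reach(A)/t+K_A t$ leaves no slack at the borderline $K_A=2\,\reach(A)$; you leave that case open, but so, in effect, does the paper. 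One small caution: ``monotonicity of $g$ in both arguments'' is not literally available, since $g_\alpha(x)$ is non-increasing in $x$ only for $x\leq\alpha/2$; the paper avoids this by bounding the two terms $\alpha^2/(8x)$ and $x/2$ separately with the lower and upper ends of the interval for $x$, which is what your subsequent expansion effectively does anyway.
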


\begin{proof}[Proof of Theorem~\ref{thm:reach_estimator}]
We begin by showing~\eqref{eqn:rch_bound} in the case where $A$ is not convex (for $A$ convex, the result is trivial since $\rch{A}{n} = \reach(A)=\infty$ \citep{federer1959}). Suppose that for some fixed $n\in \N^+$, we have $\epsilon_n \geq d_H(\hat{A}^{(n)},A)$. We write,
\begin{align}\label{eqn:proof_reach_estimator_inequality_1}
\reach_0(A) &= \inf\bigg\{g_{\norm{a_2-a_1}}\circ \delta_A\Big(\frac{a_1+a_2}2\Big) : a_1,a_2 \in A\bigg\}\nonumber\\
&\leq \inf\bigg\{g_{\norm{a_2-a_1}}\circ \delta_A\Big(\frac{a_1+a_2}2\Big) :
\begin{aligned}[t]
&a_1,a_2 \in \hat{A}^{(n)}\\
&\delta_{\hat{A}^{(n)}}\Big(\frac{a_1+a_2}2\Big) \geq \epsilon_n\bigg\}.
\end{aligned}
\end{align}
The equality in~\eqref{eqn:proof_reach_estimator_inequality_1} is an application of Definition~\ref{def:beta_reach}, and the inequality holds since the infimum is taken over a smaller subset. For any $a_1,a_2\in A$, there exists a projection of $\frac{a_1+a_2}2$ onto $A$, namely $a^\pi\in A$, satisfying
$$\norm{a^\pi - \frac{a_1+a_2}2} = \delta_A\Big(\frac{a_1+a_2}2\Big).$$
By the triangle inequality and the fact that $\epsilon_n \geq d_H(\hat{A}^{(n)},A)$,
$$\delta_{\hat{A}^{(n)}}\Big(\frac{a_1+a_2}2\Big) \leq \delta_{\hat{A}^{(n)}}(a^\pi) + \norm{a^\pi - \frac{a_1+a_2}2} \leq \epsilon_n + \delta_A\Big(\frac{a_1+a_2}2\Big).$$
Rearranging gives,
\begin{equation}\label{eqn:any_two_as}
\delta_{\hat{A}^{(n)}}\Big(\frac{a_1+a_2}2\Big) - \epsilon_n \leq \delta_A\Big(\frac{a_1+a_2}2\Big).
\end{equation}
Since $g_{\norm{a_2-a_1}}$ is non-increasing, the rightmost expression in~\eqref{eqn:proof_reach_estimator_inequality_1} cannot decrease when $\delta_A\big(\frac{a_1+a_2}2\big)$ is replaced by $\delta_{\hat{A}^{(n)}}\big(\frac{a_1+a_2}2\big) - \epsilon_n$. Thus,
$$\rch{A}{n} \geq \reach_0(A) = \reach(A),$$
where the latter equality is an application of Theorem~\ref{thm:equivalent_reach}.
\smallskip

\begin{figure}[t]
\centering
\includegraphics[width=0.7\linewidth]{./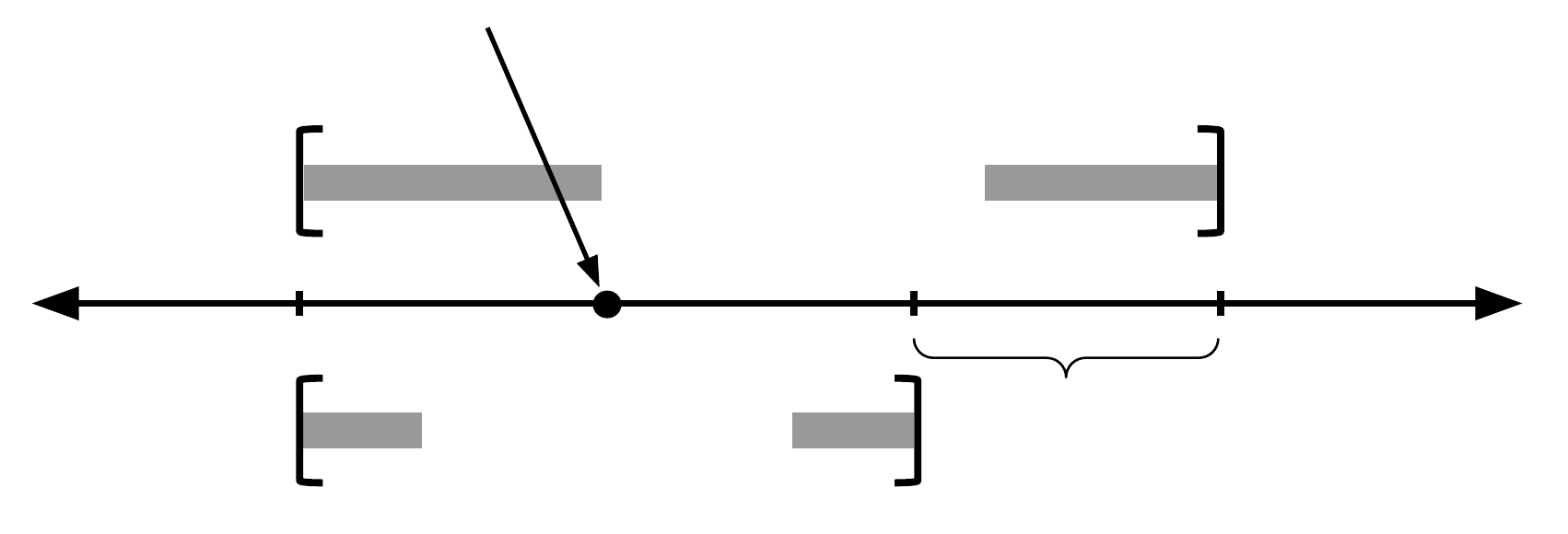}
\put(-195,83){\small{$\delta_A\big(\frac{a_1+a_2}2\big)$}}
\put(-145,50){\small{$\delta_{\hat{A}^{(n)}}\big(\frac{p_1+p_2}2\big)$}}
\put(-167,13){\small{$\delta_A\big(\frac{p_1+p_2}2\big)$}}
\put(-80,15){\small{$\epsilon_n$}}
\caption{Visual aid for Equation~\eqref{eqn:case_1}. The real numbers $\delta_{\hat{A}^{(n)}}\big(\frac{p_1+p_2}2\big)$ and $\delta_A\big(\frac{p_1+p_2}2\big)$ are contained in their respective intervals, positioned relative to $\delta_A\big(\frac{a_1+a_2}2\big)$. Ticks have a spacing of $\epsilon_n$.}
\label{fig:case_1}
\end{figure}

Now, we proceed to show~\eqref{eqn:rch_converge_rate}. Recall the equality in~\eqref{eqn:proof_reach_estimator_inequality_1}, and split the analysis into two cases.
\begin{description}
\item[Case 1:] \emph{There exists $a_1,a_2\in A$ such that $a_1\neq a_2$ and $\reach(A)=g_{\norm{a_2-a_1}}\circ \delta_A\left(\frac{a_1+a_2}2\right)$.}\\
For this pair $a_1,a_2\in A$ there exists $p_1,p_2\in \hat{A}^{(n)}$ satisfying $\norm{p_2-a_2}, \norm{p_1-a_1} \leq \epsilon_n$. This implies
\begin{equation}\label{eqn:midpoint_distances}
\norm{\frac{p_1+p_2}2 - \frac{a_1 + a_2}2} \leq \epsilon_n.
\end{equation}
Equation~\eqref{eqn:midpoint_distances} tells us that the two midpoints are close, and so their distances to $A$ cannot differ by more than the distance between them. \textit{i.e.},
\begin{equation*}
\left\vert\delta_A\left(\frac{a_1+a_2}2\right)
- \delta_A\left(\frac{p_1+p_2}2\right)\right\vert \leq \epsilon_n.
\end{equation*}
Since Equation~\eqref{eqn:any_two_as} holds for $p_1,p_2\in A$, and $\delta_A\left(\frac{p_1+p_2}2\right) \leq \delta_{\hat{A}^{(n)}}\left(\frac{p_1+p_2}2\right)$, we also have
\begin{equation}\label{eqn:case_1}
\delta_A\left(\frac{a_1+a_2}2\right) - \epsilon_n \leq \delta_{\hat{A}^{(n)}}\left(\frac{p_1+p_2}2\right) \leq \delta_A\left(\frac{a_1+a_2}2\right) + 2\epsilon_n,
\end{equation}
by the triangle inequality (see Figure~\ref{fig:case_1}).

Now, suppose that $n$ is sufficiently large such that $2\epsilon_n < y := \delta_A\left(\frac{a_1+a_2}2\right)$.
Then, by~\eqref{eqn:case_1}, $x:=\delta_{\hat{A}^{(n)}}\big(\frac{p_1+p_2}2\big)-\epsilon_n > 0$ and so
\begin{align}\label{eqn:bounding_rch_by_as_and_y}
\rch An &\leq g_{\norm{p_2-p_1}}(x) = \frac{\norm{p_2-p_1}^2}{8x} + \frac{x}{2}\nonumber\\
&\leq \frac{(\norm{a_2-a_1} + 2\epsilon_n)^2}{8x} + \frac{x}{2}\nonumber\\
&\leq \frac{(\norm{a_2-a_1} + 2\epsilon_n)^2}{8(y-2\epsilon_n)} + \frac{y+\epsilon_n}{2}.
\end{align}
The second inequality in~\eqref{eqn:bounding_rch_by_as_and_y} holds since $\norm{p_2-p_1} \leq \norm{a_2-a_1} + 2\epsilon_n$, and the final inequality holds by Equation~\eqref{eqn:case_1}.

Recall that $\reach(A)=\frac{\norm{a_2-a_1}^2}{8y} + \frac{y}{2}$. The difference between this and the final expression in~\eqref{eqn:bounding_rch_by_as_and_y} is of the order $\mathcal{O}(\epsilon_n)$, which is stronger than required.
\smallskip

\item[Case 2:] \emph{$\reach(A) < g_{\norm{a_2-a_1}}\circ \delta_A\left(\frac{a_1+a_2}2\right)$ for all distinct $a_1,a_2\in A$.}\\
By Assumption~\ref{ass:beta_reach_increasing}, for all sufficiently large $n$, the function $\reach_\beta(A)$ is strictly increasing for $\beta$ in a neighbourhood of $\beta_n := \sqrt{\epsilon_n}+2\epsilon_n$. For each of these values of $n$, define
$$\Psi_n := \{(a_1,a_2)\in A^2 : \delta_A\left(\frac{a_1+a_2}2\right) \geq \beta_n\},$$
which is compact by Assumption~\ref{ass:beta_reach_increasing},
and the continuous map $f_n:\Psi_n\to\R^+$ by
$$f_n\big((\psi_1,\psi_2)\big) = g_{\norm{\psi_2-\psi_1}}\circ \delta_A\left(\frac{\psi_1+\psi_2}2\right).$$
Remark that $\reach_{\beta_n}(A) = \inf_{\psi\in\Psi_n}f_n(\psi)$, and so there is an element $\psi^*_n := (\psi^*_{1n}, \psi^*_{2n})\in\Psi_n$ for which
$$\reach_{\beta_n}(A) = f_n(\psi^*_n).$$
By Definition~\ref{def:beta_reach}, $\reach_\beta(A)\leq f_n(\psi^*_n)$ for $\beta = \delta_A\big(\frac{\psi^*_{1n}+\psi^*_{2n}}{2}\big)$. The $\beta$-reach profile of $A$ is thus constant on the interval $\big[\beta_n,\delta_A\big(\frac{\psi^*_{1n}+\psi^*_{2n}}{2}\big)\big]$ by Theorem~\ref{thm:equivalent_reach}, but by Assumption~\ref{ass:beta_reach_increasing}, any such interval must have length 0. Therefore,
\begin{equation*}
\beta_n = \delta_A\left(\frac{\psi^*_{1n}+\psi^*_{2n}}{2}\right).
\end{equation*}

Again by Assumption~\ref{ass:beta_reach_increasing},
\begin{align*}
\reach_{\beta_n}(A) = \frac{\norm{\psi^*_{2n} - \psi^*_{1n}}^2}{8\beta_n} + \frac{\beta_n}2 = \reach(A) + K_A \beta_n + \littleo(\beta_n).
\end{align*}
Therefore,
\begin{align*}
\norm{\psi^*_{2n} - \psi^*_{1n}}^2 &= 8\beta_n\,\reach(A) + (8K_A-4)\beta_n^2 + \littleo(\beta_n^2)\\
&= 8\sqrt{\epsilon_n}\,\reach(A) + (16\,\reach(A) + 8K_A -4)\epsilon_n + \littleo(\epsilon_n).
\end{align*}
As seen in Case~1, Equation~\eqref{eqn:bounding_rch_by_as_and_y},
\begin{align*}
\rch An &\leq \frac{(\norm{\psi^*_{2n} - \psi^*_{1n}} + 2\epsilon_n)^2}{8(\beta_n - 2\epsilon_n)} + \frac{\beta_n+\epsilon_n}2\\
&= \reach(A) + \left(2\,\reach(A) + K_A\right)\sqrt{\epsilon_n} + \littleo(\sqrt{\epsilon_n}).
\end{align*}
\end{description} 
\end{proof}

\begin{rema}\label{rem:aamari_split}
The split of the proof of Theorem~\ref{thm:reach_estimator} into two cases is natural in the context of reach estimation. Recall Theorem~3.4 in \cite{aamari2019}: For a compact set $A$ with $0 < \reach(A) < \infty$, there is either a bottleneck structure (two distinct points $a_1,a_2\in A$ such that $\norm{a_2-a_1} = 2\delta_A\left(\frac{a_1+a_2}2\right) = 2\,\reach(A)$) or an arc-length parametrized geodesic of $A$ with curvature $1/\reach(A)$. The convergence rate developed in Case~1 applies to sets whose reach is decided by a bottleneck structure, and that of Case~2 applies to sets whose reach is decided by a region of high curvature.
\end{rema}

\begin{rema}
In \cite{aamari2022}, the authors provide a minimax optimal rates of convergence for statistical estimators of the reach. The authors proceed by assuming that $A\subset\R^d$ is a $C^k$-smooth manifold without boundary, with $k\geq 3$. The minimax rates in \cite[Theorem~6.6]{aamari2022} are expressed in terms of $k$, the number of continuous derivatives of the underlying manifold.

It is not surprising that our bound on the reach---for sets satisfying the very general Assumption~\ref{ass:beta_reach_increasing}---does not converge to the reach at the optimal rates given in \cite{aamari2022}. Nevertheless, we do observe a similar phenomenon in that the rate of convergence derived for a set whose reach is determined by a bottleneck structure ($\bigo(\epsilon_n)$ in Case~1 in the proof of Theorem~\ref{thm:reach_estimator}) is faster than the rate when the reach is determined by a region of high curvature ($\bigo(\sqrt{\epsilon_n})$ in Case~2).
\end{rema}

\subsection{The $\beta$-reach profiles of high-dimensional point clouds}\label{sec:point_cloud_beta_reach}

Here, we tackle the problem of approximating the reach of smooth manifolds of low dimension embedded in high-dimensional Euclidean space. As discussed in the introduction, this setting is the focus of many works. Given a $C^1$-smooth manifold $M\subset \R^d$ of dimension $m<d$, we suppose that one has access to a set of points $\hat{M}\subset M$ from which one would like to infer $\reach(M)$.

Recall from Definition~\ref{def:beta_reach} that the $\beta$-reach of a point cloud is easily computable, and the computation time does not heavily depend on the dimension $d$ of the ambient space. The $\beta$-reach profile of a point cloud can be obtained by considering pairs of points in the point cloud and computing the distance from their midpoint to the closest neighbour in the point cloud. For a fixed number of points, the computation time for calculating the \textit{exact} $\beta$-reach profile of the point-cloud scales linearly in $d$. Of course, the $beta$-reach profile of $\hat{M}$ is not identical to that of $M$, however, we argue here that it provides a good approximation for the values of $\beta$ away from 0.

Recall that the bounds developed in Sections~\ref{sec:point_cloud_rconv} and~\ref{sec:point_cloud_reach} are for subsets of $\R^d$ that possibly have positive $d$-volume. This means that a point cloud $\hat A$ contained in a set $A\subset\R^d$ can recede a distance $\epsilon:=d_H(\hat A,A)$ from the topological boundary of $A$. Thus, the distance from a point in $A^c$ to $A$ can be up to $\epsilon$ \textit{shorter} than the distance to the nearest point in $\hat A$. Conversely, for a smooth manifold $M$ embedded in $\R^d$, projection of an element $p\in\R^d$ onto $M$ is along a vector normal to $M$ at the projection in $M$. Thus, the distance from $p$ to its nearest point in $\hat{M}$ is more similar to the length of the projection vector, so in this setting, the bounds constructed previously are overly robust. Moreover, existing algorithms for computing a mesh from point cloud data can be leveraged to improve quality of the estimate of the distance to the nearest point in $M$, thus improving the estimate of the $\beta$-reach.
\smallskip

\begin{figure}[t]
\centering
\includegraphics[width=0.95\linewidth]{./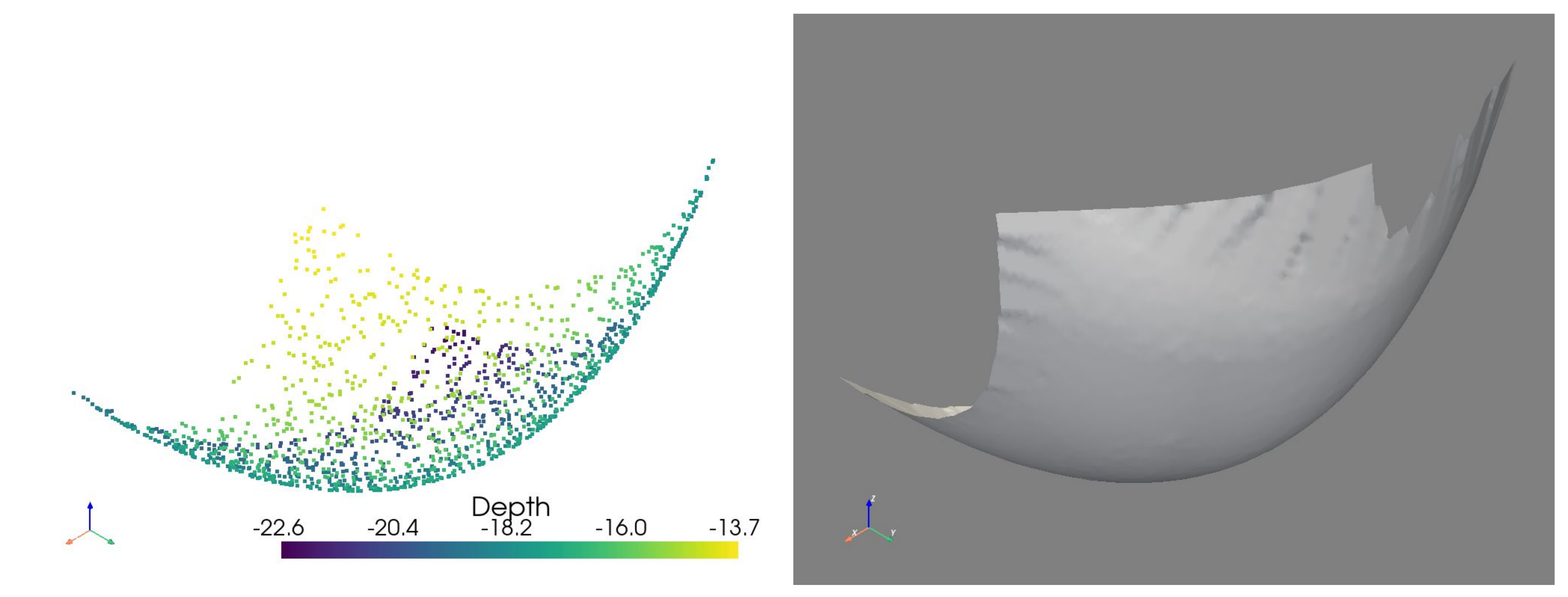}
\put(-90,-10){(b)}
\put(-250,-10){(a)}
\caption{\textbf{(a):} The point cloud $\hat M$ in Example~\ref{exa:point_cloud_surface}. \textbf{(b):} The mesh $\mathcal{M}(\hat M)$ computed with the Python library \texttt{pyvista}.}
\label{fig:point_cloud_surface}
\end{figure}

\begin{figure}[t]
\centering
\includegraphics[width=0.9\linewidth]{./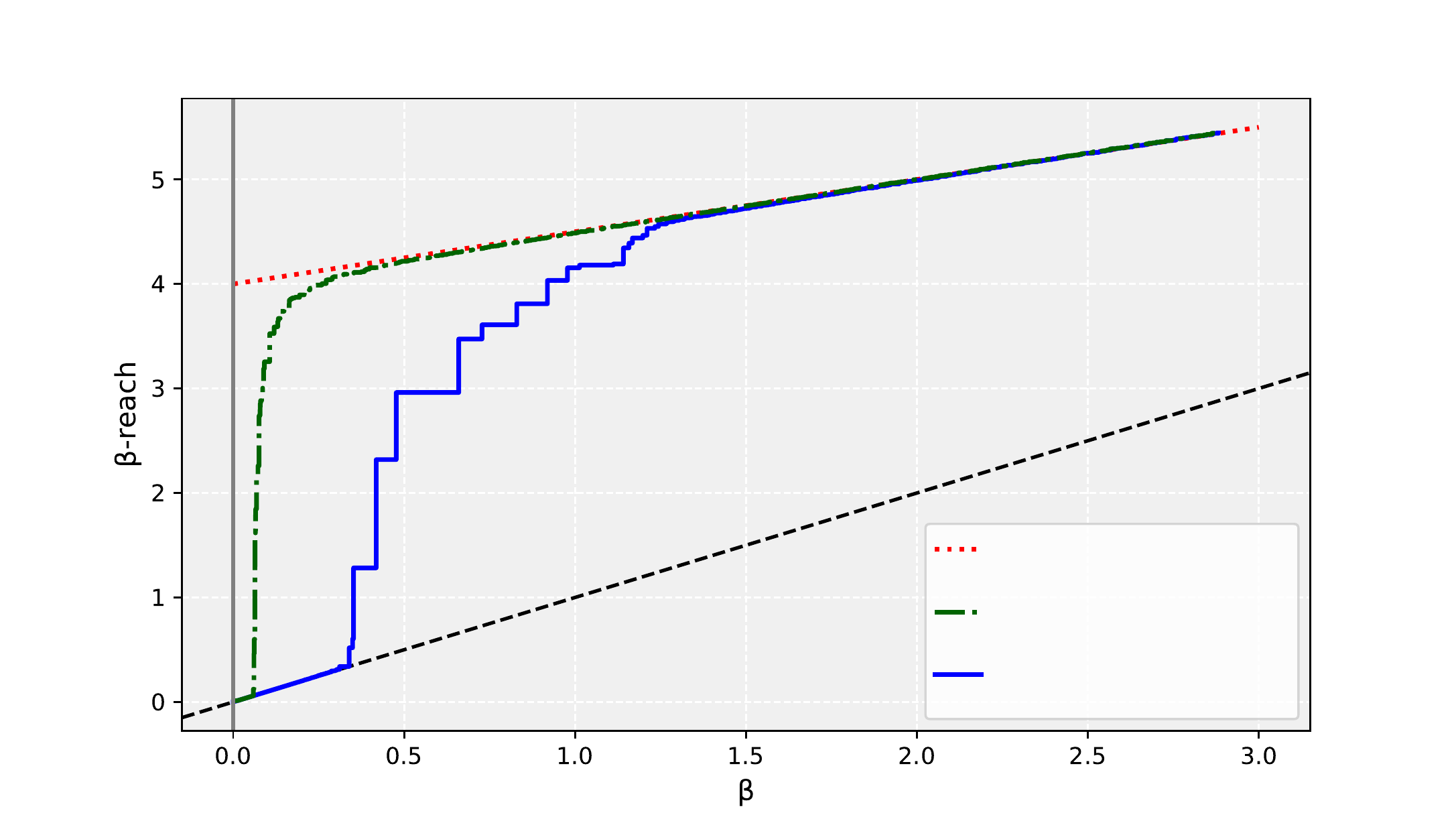}
\put(-94,53){\small{$\reach_\beta(M)$}}
\put(-94,40){\small{$\reach_\beta^{\mathcal{M}}(\hat{M})$}}
\put(-94,27){\small{$\reach_\beta(\hat{M})$}}
\caption{The exact $\beta$-reach profile (see Definition~\ref{def:beta_reach}) of $\hat{M}$ in Example~\ref{exa:point_cloud_surface} and Figure~\ref{fig:point_cloud_surface}~(a) is shown as the blue solid curve. Using the formulation in~\eqref{eqn:mesh_beta_reach}, and the mesh in Figure~\ref{fig:point_cloud_surface}~(b), one may compute an improved approximation (green dashed curve) to the $\beta$-reach of the underlying manifold $M$ (red dotted curve).}
\label{fig:point_cloud_surface_beta_reach}
\end{figure}

\begin{exam}[3-dimensional point cloud on a 2-dimensional manifold]\label{exa:point_cloud_surface}
Let $M\subset\R^3$ be a section of a two-dimensional paraboloid, where its reach is decided by a point of maximal curvature at the vertex ($\reach(M)=4$). Let $\hat{M}\subset M$ be a realization of $1500$ points uniformly distributed on $M$ shown in Figure~\ref{fig:point_cloud_surface}~(a). Using the Python library \texttt{pyvista} \cite{sullivan2019}, we construct a mesh $\mathcal{M}(\hat{M})$ over the point cloud $\hat{M}$ (see Figure~\ref{fig:point_cloud_surface}~(b)). This allows for an improved approximation of the $\beta$-reach profile of $M$,
\begin{align}\label{eqn:mesh_beta_reach}
\reach_\beta^{\mathcal{M}}(\hat{M}) := \inf\bigg\{g_{\norm{a_2-a_1}}(x) :\ 
&a_1,a_2 \in \hat{M},\\
&x=\delta_{\mathcal{M}(\hat{M})}\left(\frac{a_1+a_2}2\right) \geq \beta\bigg\}.\nonumber
\end{align}
The $\beta$-reach of $M$ can be computed exactly as $\reach_\beta(M) = 4 +\frac{\beta}2$, for $\beta$ between 0 and some positive constant. As seen in Figure~\ref{fig:point_cloud_surface_beta_reach}, this is well approximated by $\reach_\beta(\hat{M})$ for $\beta>1.3$, and by $\reach_\beta^{\mathcal{M}}(\hat{M})$ for $\beta > 0.4$; both approximations are efficiently computable from the point cloud $\hat{M}$.
\end{exam}

To estimate $\reach(M)$ from the $\beta$-reach profiles in Example~\ref{exa:point_cloud_surface}, one can perform a linear regression on $\reach_\beta^{\mathcal{M}}(\hat{M})$, for $\beta$ in the range where the curve appears linear, and estimate the reach as the model intercept. 

Although the manifold $M$ in Example~\ref{exa:point_cloud_surface} is derived from a simple paraboloid, this example is representative of other two-dimensional manifolds whose reach is determined by a region of high curvature. The difference might be that the linear term in the first order approximation of the $\beta$-reach profile at $\beta=0$ does not have a coefficent of 1/2 (see Example~\ref{exa:beta_reach_C2}).

The following two examples emphasize the applicability of this method in higher dimensions.

\begin{figure}[t]
\centering
\includegraphics[width=0.8\linewidth]{./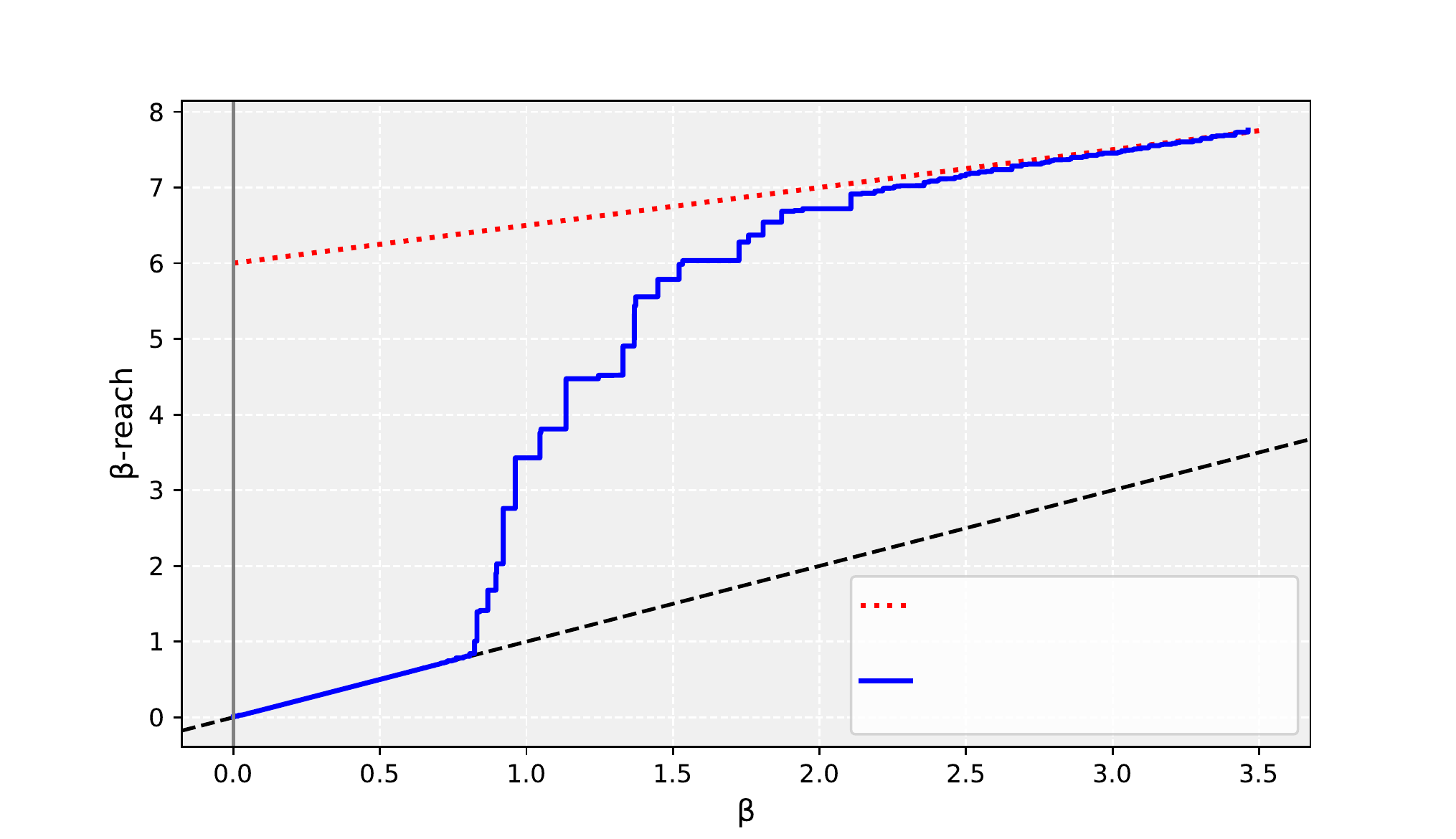}
\put(-93,39){\small{$\reach_\beta(M)$}}
\put(-93,26){\small{$\reach_\beta(\hat{M})$}}
\caption{The exact $\beta$-reach profile of $\hat{M}$ in Example~\ref{exa:d4} is shown as the blue solid curve. The $\beta$-reach of the underlying manifold $M$ (a 3-dimensional paraboloid embedded in $\R^d$) is shown as a red dotted curve.}
\label{fig:d4_beta_reach}
\end{figure}

\begin{exam}[$d$-dimensional point cloud on a 3-dimensional manifold]\label{exa:d4}
Let $d\geq 4$, and let $M\subset\R^d$ be a section of the three-dimensional paraboloid defined by $x_1^2 + x_2^2 + x_3^2 - 12x_4 = 0$, such that its reach is decided by a point of maximal curvature at the origin. It can be shown that $\reach_\beta(M)=6 + \frac{\beta}2$, for $\beta$ between 0 and some positive threshold.

Let $\hat{M}\subset M$ be a realization of $3000$ points, uniformly distributed on $M$. The Python package \texttt{pyvista} does not currently have methods for computing meshes in dimension higher than 3, so in this example, we do not compute $\reach_\beta^{\mathcal{M}}(\hat{M})$ in~\eqref{eqn:mesh_beta_reach}. Mesh generation in higher dimension is itself an active field of research \citep{boissonnat2008, boissonnat2009, edelsbrunner2001}.
Nonetheless, we plot the exact $\beta$-reach profile of $\hat{M}$ in Figure~\ref{fig:d4_beta_reach}.
\end{exam}

\begin{rema}\label{rem:relationship_n_d_m}
The dimension of the ambient space $d\geq 4$ in Example~\ref{exa:d4} does not affect the shape of the $\beta$-reach profile of the point cloud $\hat M$, computed purely in terms of distances between the points, and distances to midpoints. These distances are preserved under isometries to higher dimensional spaces.
However, the number of points in $\hat M$ plays a role in the shape of the $\beta$-reach profile through the Hausdorff distance between $\hat M$ and the underlying manifold $M$. The exact relationship between the number of points in $\hat M$ and the Hausdorff distance $d_H(\hat M, M)$ cannot be made precise with no prior knowledge of how the points are distributed on $M$---which also plays a role in the shape of the $\beta$-reach profile of $\hat M$. Nonetheless, the dimension of $M$ determines to a large extent the number of points needed to ensure that $d_H(\hat M, M)$ is less than a given tolerance.
\end{rema}

\begin{figure}[t]
\centering
\includegraphics[width=0.8\linewidth]{./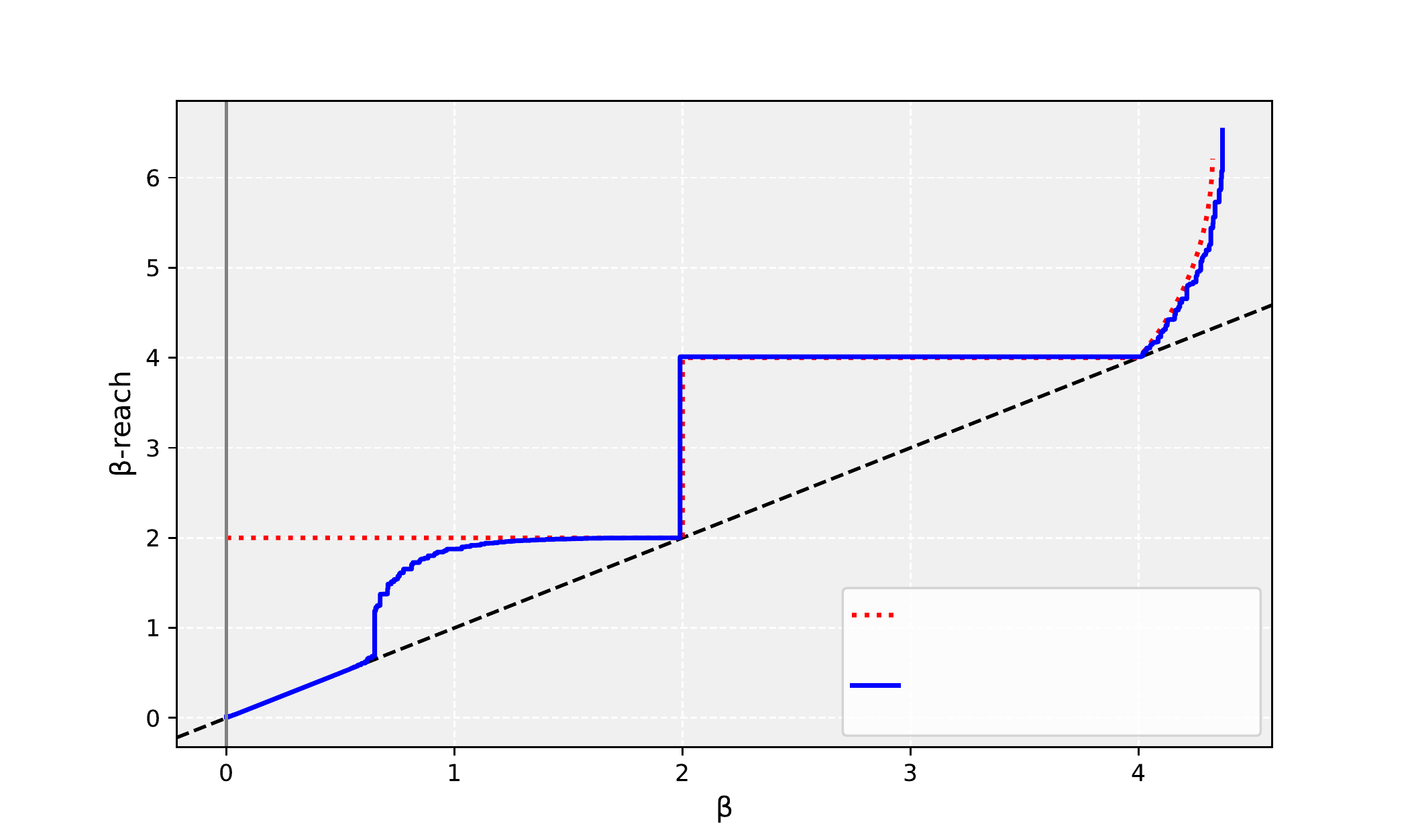}
\put(-93,39){\small{$\reach_\beta(M)$}}
\put(-93,26){\small{$\reach_\beta(\hat{M})$}}
\caption{The exact $\beta$-reach profile of $\hat{M}$ in Example~\ref{exa:spheres}. The $\beta$-reach of the underlying manifold $M$ is shown as a red dotted curve. Here, $M$ is the union of two 3-dimensional hyperspheres of radius 2 embedded in $\R^d$.}
\label{fig:spheres}
\end{figure}

\begin{exam}[Two 3-dimensional hyperspheres]\label{exa:spheres}
Let $d\geq 4$, and let $M\subset\R^d$ be the union of two three-dimensional hyperspheres of radius 2 whose centers are 12 units apart. Let $\hat{M}\subset M$ be a realization of $3000$ points uniformly distributed on $M$. The $\beta$-reach profiles of $M$ and a realization of $\hat{M}$ are shown in Figure~\ref{fig:spheres}.
\end{exam}

Example~\ref{exa:spheres} highlights a few nice features of the $\beta$-reach profile. That is, it adapts very well to situations where the reach is determined by a bottleneck structure, as is the case for a hypersphere. For $\beta\in (2,4]$, the $\beta$-reach is no longer determined by the radius of the hypersphere, but by half the distance between the surfaces of the spheres (in this case, $(12-2-2)/2=4$ units). Thus, the $\beta$-reach profile also provides information about the large scale features of the data. Notably, it gives the scales at which it becomes possible to distinguish these features from one another.

Finally, recall from Remark~\ref{rem:beta_reach_bounded_by_beta} that one can read from the plot information about the critical points of the generalized gradient function. In the case of Example~\ref{exa:spheres}, the distances from the three critical points to $M$ are 2, 2, and 4.

\section{Numerical studies}\label{sec:sim}

In this section, we test the methods for bounding the reach and $r$-convexity introduced in Section~\ref{sec:point_cloud} against numerical data. First, in Section~\ref{sec:sim:aircraft}, we study the performance of our methods on real data. Then, in Section~\ref{sec:sim_convergence}, we test the convergence results in Theorems~\ref{thm:rconv_estimator} and~\ref{thm:reach_estimator}.

\subsection{Aircraft data}\label{sec:sim:aircraft}

The real data in Example~\ref{exa:airplane}, below, is studied using the tools developed in this document. After a short description of the data, we perform several analyses using the tools developed in Section~\ref{sec:point_cloud}. 

\begin{figure}[t]
    \centering
    \begin{subfigure}{0.6\textwidth}
\centering
\includegraphics[width=\textwidth]{./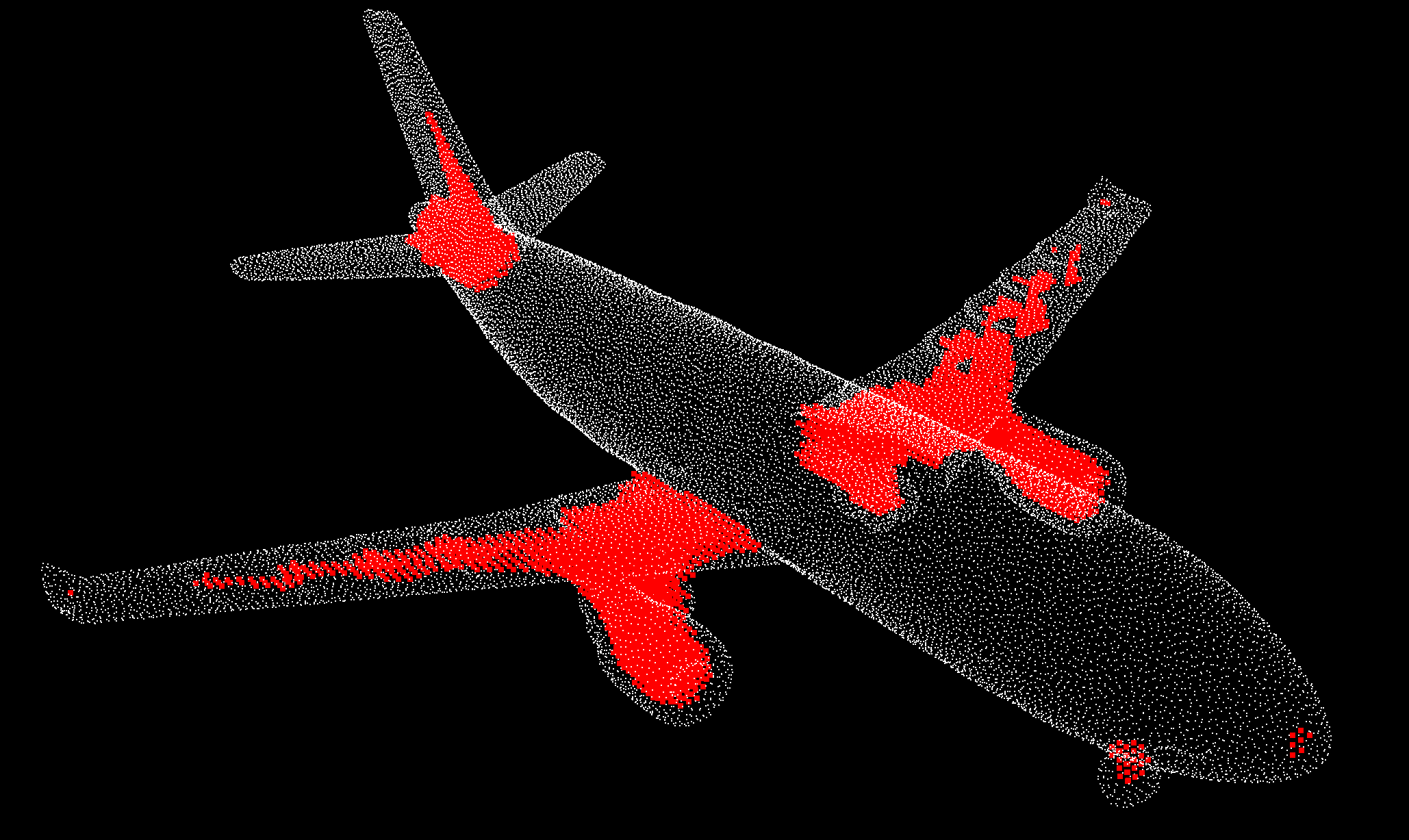}
\caption{}
\label{fig:plane_red}
\end{subfigure}
    \hfill
    \begin{subfigure}{0.3\textwidth}
        \centering
        \includegraphics[width=\textwidth]{./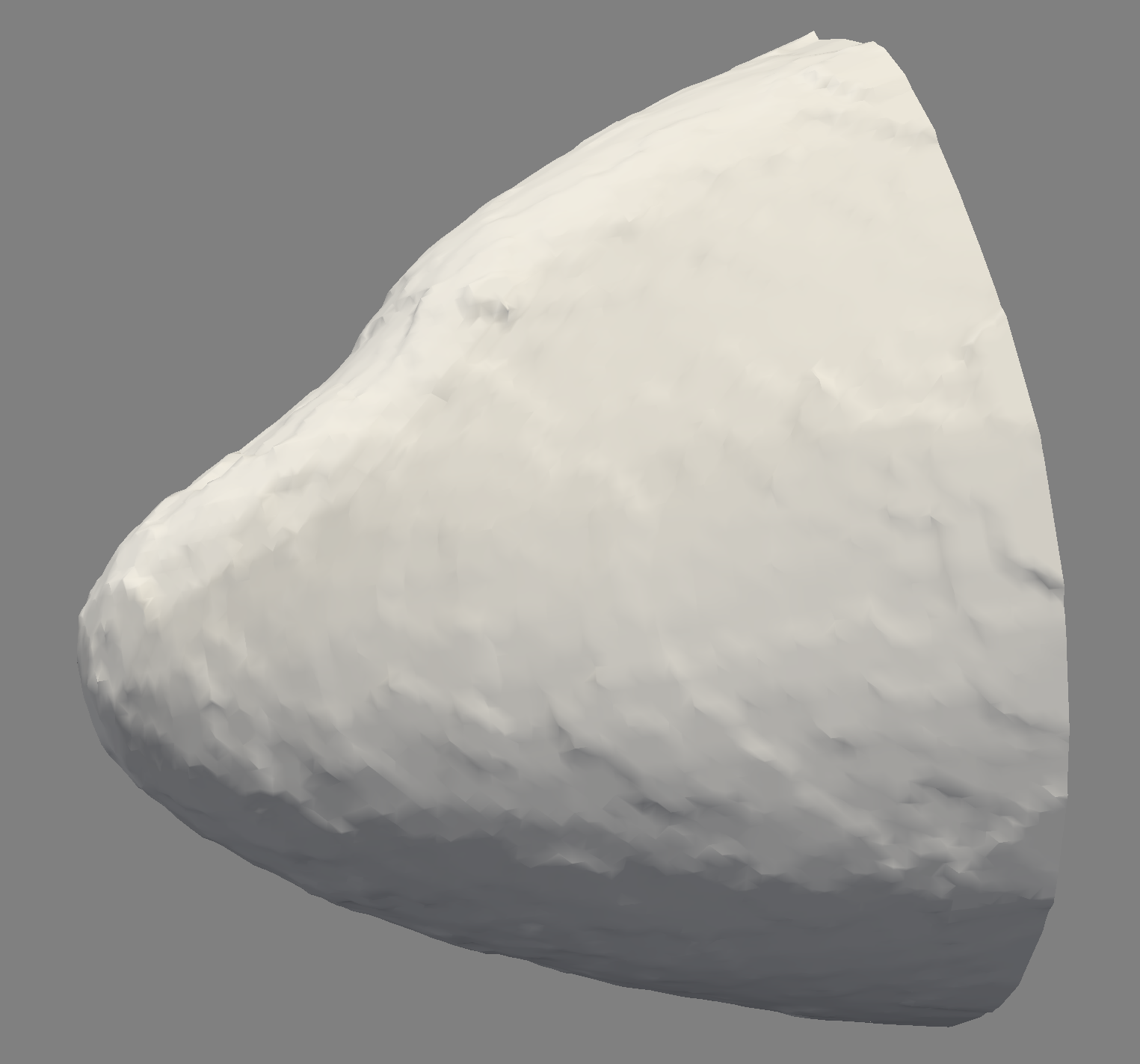}
        \caption{}
        \label{fig:plane_nose}
    \end{subfigure}
    \caption{\textbf{(a):} The point cloud data $\hat{P}$ in Example~\ref{exa:airplane} is used to guarantee that the $r$-convexity of the underlying surface of the plane $P$ does not exceed th chosen test radius $r=0.03$. The areas of the plane responsible for this restriction on the $r$-convexity are highlighted in red. \textbf{(b):} The \texttt{pyvista} mesh $\mathcal{M}(\hat{N})$ constructed from $\hat N\subset \hat P$, the points at the nose of the plane.}
    \label{fig:plane}
\end{figure}

\begin{exam}[Aircraft data]\label{exa:airplane}
\cite{baorui2022} provides a point cloud dataset that is sampled over the surface of a commercial aircraft (the white points in Figure~\ref{fig:plane} (a)). The diameter of the raw point cloud data (which corresponds to the length of the plane) is 0.749 units. Denote the surface of the aircraft by $P$ and the approximating point cloud by $\hat{P}$.

We are also interested in the nose of the aircraft, $N\subset P$, the surface of the first 0.056 length units of the aircraft. A two-dimensional mesh approximating $\hat{N} := N\cap \hat{P}$ is shown in Figure~\ref{fig:plane} (b).
\end{exam}

It is possible that in an engineering practice, one may want to identify which regions of a point cloud are not $r$-convex for a predefined value of $r$. Panel~(a) of Figure~\ref{fig:plane} highlights in red the regions of the plane that are \textit{certainly} not $r$-convex for the choice of $r=0.03$.

The surface of the plane $P$ is known to be two-dimensional, and so a cubic lattice of points $\varphi$ with lattice spacing $a=0.004$ superimposed over $\hat{P}$ will almost surely not intersect $P$. The discrete dilation and erosion operations in Definition~\ref{def:pc} are well defined using the larger point cloud $\pc:= \varphi \cup \hat{P}$.
The red points in Figure~\ref{fig:plane}~(a) are the elements of the discrete set $(\hat{P}_{r - \epsilon})_{-(r+\epsilon)} \cap \varphi$, with $\epsilon:=\sqrt{3}a/2 = \sup\{\delta_{\pc}(q):q\in\R^3\}$. Since this set is not empty, one has that $P$ is a proper subset of $P_{\bullet r}$. In addition, by~\eqref{eqn:reach_inclusion}, one has conclusive evidence that $\reach(P)\leq r$.

This example illustrates that, with this method, one can identify the regions responsible for limiting the $r$-convexity (and thus the reach), with a test specificity of 100\%. One can improve the sensitivity of the test by decreasing the lattice spacing $a$. Then, other regions that are not $r$-convex (such as the interiors of the horizontal and vertical stabilizers) would be identified as such.
\smallskip

Remark that the nose of the plane is marked in red, due of course to a region of high curvature. By considering only the smooth manifold $N$ in Example~\ref{exa:airplane}, corresponding to the nose of the plane, we can approximate the $\beta$-reach profile of $N$ by $\reach_\beta(\hat{N})$ in Definition~\ref{def:beta_reach}, or by $\reach_\beta^{\mathcal M}(\hat{N})$ in Equation~\eqref{eqn:mesh_beta_reach}.

\begin{figure}[t]
\centering
\includegraphics[width=0.8\linewidth]{./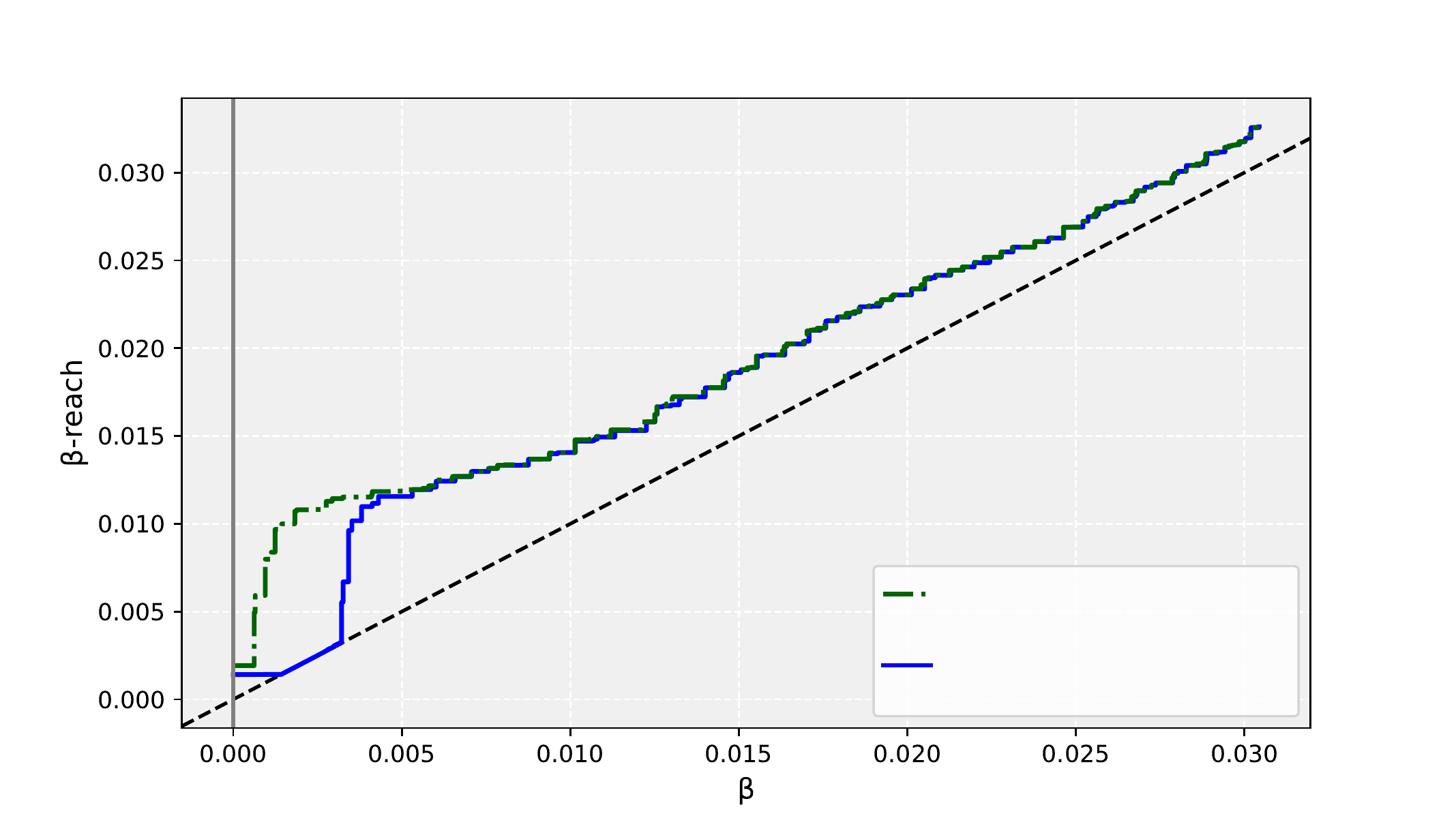}
\put(-93,37){\small{$\reach_\beta^{\mathcal{M}}(\hat{N})$}}
\put(-93,24){\small{$\reach_\beta(\hat{N})$}}
\caption{The exact $\beta$-reach profile of $\hat{N}$ in Example~\ref{exa:airplane} is shown as the blue solid curve. Using the formulation in~\eqref{eqn:mesh_beta_reach}, and the mesh in Figure~\ref{fig:plane}~(b), one may compute an improved approximation (green dashed curve) to the $\beta$-reach of the underlying (unknown) manifold $N$.}
\label{fig:beta_reach_plane}
\end{figure}

The resulting $\beta$-reach profiles are shown in Figure~\ref{fig:beta_reach_plane}. From the figure, the reach is clearly seen to be much smaller than $0.03$ as indicated by the $r$-convexity experiment. Assuming that the $\beta$-reach profile of $N$ maintains a constant slope near 0 as seen before in Figure~\ref{fig:point_cloud_surface_beta_reach}, the approximation $\reach(N)\approx 0.012$ can be read from Figure~\ref{fig:beta_reach_plane}.
\smallskip


There are two main issues with using the reach bound in~\eqref{eqn:reach_estimator} on the aircraft data in Example~\ref{exa:airplane}. First, it is expected to produce a large overestimate of $\reach(N)$ since $N$ is a smooth manifold (see the discussion in Section~\ref{sec:point_cloud_beta_reach}). In addition, it is impossible to know the Hausdorff distance beween $N$ and its approximating point cloud $\hat{N}$. Nevertheless, one can obtain a good approximation by considering the persistence diagram of growing balls centered at the points in $\hat{N}$. The largest of the death-times of the topological features corresponds to the smallest radius for which the union of balls is homotopic to a point. This is likely to corespond closely to the Hausdorff distance $d_H(N,\hat{N})$, and is thus a good choice for $\epsilon$ in~\eqref{eqn:reach_estimator}. The persistence diagram, generated from the Python module \texttt{ripser} \cite{bauer2021}, gives that the largest death-time is $\epsilon=0.0065$. From this, one calculates
$$\hat{\mathrm{rch}}^{(\epsilon)} (\hat N)= 0.022.$$
\smallskip

Although the reach bound in~\eqref{eqn:reach_estimator} is less appropriate to use in this setting, we will see in the following section that it (and the $r$-convexity bound in~\eqref{eqn:convexity_estimator}) is highly applicable when studying binary images.

\subsection{Numerical convergence of reach and $r$-convexity bounds}\label{sec:sim_convergence}

In stochastic geometry literature, the \textit{excursion set} of a random field is the subset of its domain on which the random field surpasses a predefined threshold (see \cite{adler2007} for a comprehensive reference). The set in Figure~\ref{fig:corrected_reach_test}, for example, is one realization of the excursion set of a stationary, isotropic Gaussian random field sampled on a square lattice.

The following two examples are meant to imitate the discretized excursion set of $C^2$ continuous random fields on square lattices (see, \textit{e.g.}, \cite{cotsakis2022, cotsakis2022_1, bierme2021}). A key feature of both examples, is that the reach and $r$-convexity of the sets are known, and so we can study the convergence of the bounds in Theorems~\ref{thm:rconv_estimator} and~\ref{thm:reach_estimator} as the grid of sampling points becomes dense in $\R^2$. Each example illustrates one of the two cases mentioned in Remark~\ref{rem:aamari_split} concerning the relationship between the reach, regions of high curvature, and bottleneck structures.

\begin{figure}[t]
\centering
\includegraphics[width=0.66\linewidth]{./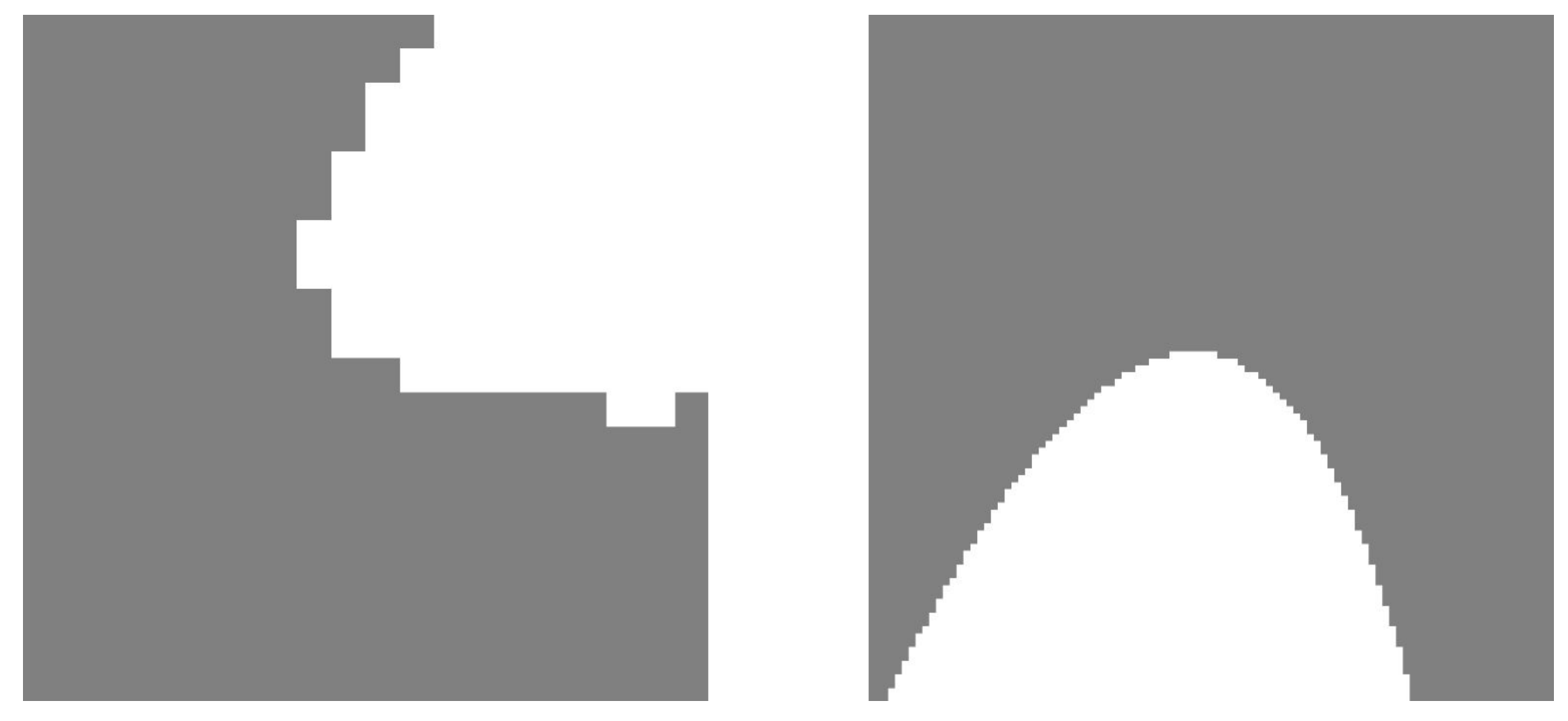}
\put(-58,-10){(b)}
\put(-176,-10){(a)}
\caption{Realizations of the random set $\hat{U}^{(n)}$ in Example~\ref{exa:curvature_sim}. \textbf{(a):} $n=2$. \textbf{(b):} $n=10$.}
\label{fig:curvature_sim}
\end{figure}

\begin{exam}[Reach determined by curvature]\label{exa:curvature_sim}
Let $U := \{(x,y)\in\R^2 : y \leq x^2/2\} \cap B(0,10)$. It is easy to check that $\reach(U) = \rconv(U) = 1$. Moreover, the reach is determined by a point of maximal curvature at $(0,0)\in U$.

Let $(\pc^{(n)})_{n\geq 1}$ be a sequence of square lattices over $\R^2$ with lattice spacing $a_n = 0.7/n$, and independent, uniformly random position and orientation. For $n\in\N^+$, let $\hat{U}^{(n)}:= U \cap \pc^{(n)}$ be the set $U$ sampled on $\pc^{(n)}$. Figure~\ref{fig:curvature_sim} depicts square subsets of realizations of $\hat{U}^{(2)}$ and $\hat{U}^{(10)}$ shown as binary images.
\end{exam}

\begin{figure}[t]
\centering
\includegraphics[width=0.66\linewidth]{./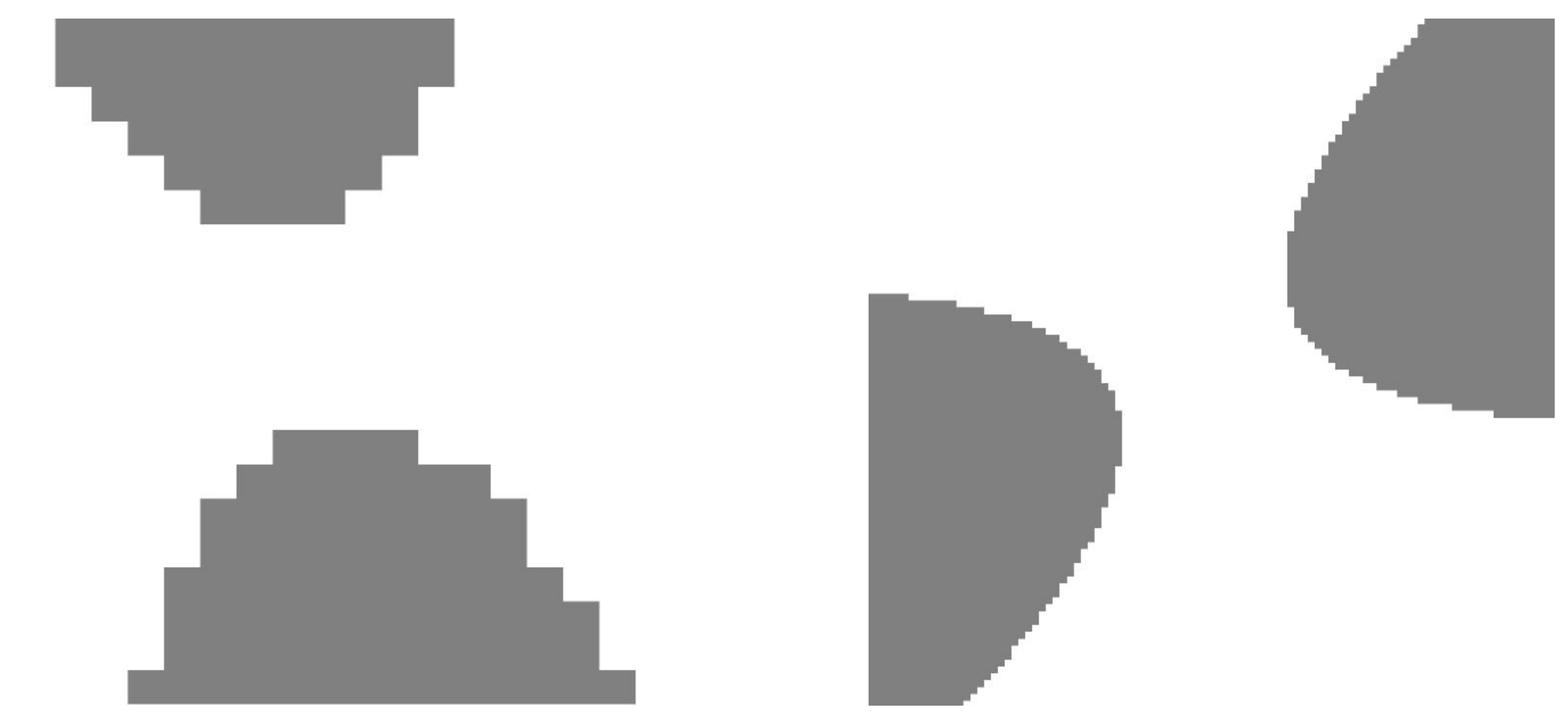}
\put(-58,-10){(b)}
\put(-176,-10){(a)}
\caption{Realizations of the random set $\hat{W}^{(n)}$ in Example~\ref{exa:bottleneck_sim}. \textbf{(a):} $n=2$. \textbf{(b):} $n=10$.}
\label{fig:bottleneck_sim}
\end{figure}

\begin{exam}[Reach determined by a bottleneck structure]\label{exa:bottleneck_sim}
Let $W:= \{(x,y)\in\R^2 : \vert y\vert \geq x^2/2 + 1\}\cap B(0,10)$. One has $\reach(W)=\rconv(W)=1$, which is half the distance between its two connected components.

For the sequence of square lattices $(\pc^{(n)})_{n\geq 1}$ in Example~\ref{exa:curvature_sim}, let $\hat{W}^{(n)} := W\cap \pc^{(n)}$. Figure~\ref{fig:bottleneck_sim} depicts square subsets of realizations of $\hat{W}^{(2)}$ and $\hat{W}^{(10)}$ shown as binary images.
\end{exam}

\begin{rema}
The choice to use sets with a reach of 1 in Examples~\ref{exa:curvature_sim} and~\ref{exa:bottleneck_sim} does not limit their generality. The importance lies in the scale of the sampling lattice $\pc^{(n)}$ relative to the reach, and the ratio of the two tends to 0 as $n\to\infty$ in both examples.

The following observation makes the gernerality of these examples even clearer: to consider the lattice of sampling points at various scales and orientations is equivalent to considering a fixed lattice and various scales and orientations of the underlying sets $U$ and $W$.
\end{rema}

\begin{figure}[t]
\centering
\includegraphics[width=\linewidth]{./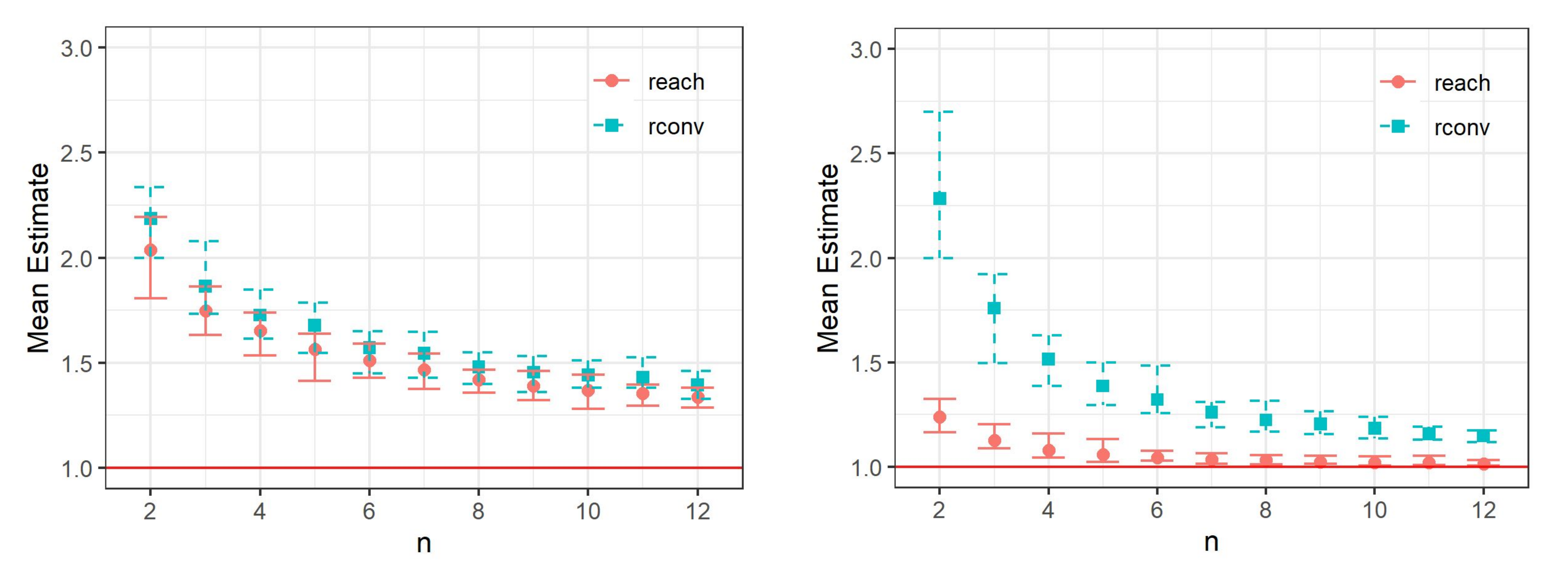}
\put(-83,-10){(b)}
\put(-253,-10){(a)}
\caption{\textbf{(a):} The mean and 95\% confidence interval for the values of $\hat{\mathrm{r}}^{(a_n/\sqrt{2})}(\hat{U}^{(n)})$ (squares) and $\hat{\mathrm{rch}}^{(\sqrt{1.25}a_n)}(\hat{U}^{(n)})$ (circles) for 50 independent realizations of $\hat{U}^{(n)}$ in Example~\ref{exa:curvature_sim} \textbf{(b):} The mean and 95\% confidence interval for the values of $\hat{\mathrm{r}}^{(a_n/\sqrt{2})}(\hat{W}^{(n)})$ (squares) and $\hat{\mathrm{rch}}^{(\sqrt{1.25}a_n)}(\hat{W}^{(n)})$ (circles) for 50 independent realizations of $\hat{W}^{(n)}$ in Example~\ref{exa:bottleneck_sim}.}
\label{fig:plots}
\end{figure}

For each $n\in\{2,\ldots,12\}$, we compute 50 independent realizations of $\hat{U}^{(n)}$ and 50 independent realizations of $\hat{W}^{(n)}$. For each of the resulting discrete sets, we compute the bounds of $r$-convexity and reach in~\eqref{eqn:convexity_estimator} and~\eqref{eqn:reach_estimator} respectively.
The means of the bounds are plotted in Figure~\ref{fig:plots} with empirical 95\% confidence intervals. Panels~(a) and~(b) correspond to the results for the replications of $\hat{U}^{(n)}$ and $\hat{W}^{(n)}$ respectively.

\begin{figure}[t]
\centering
\includegraphics[width=0.55\linewidth]{./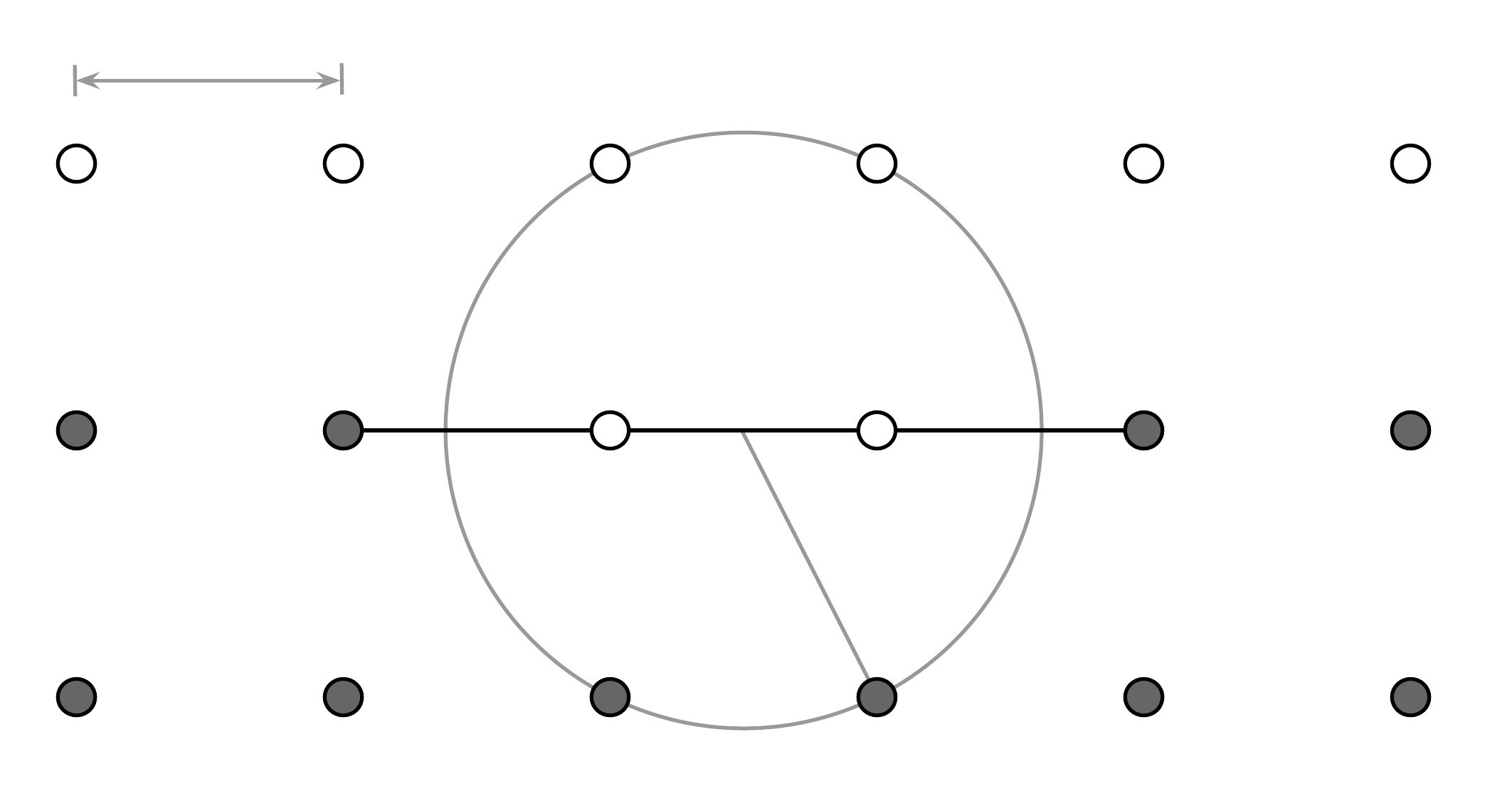}
\put(-167,77){\small{$a_n$}}
\put(-120,24){\small{$\sqrt{1.25}a_n$}}
\caption{The grey points and the white points can be seperated by a curve with arbitrarily small curvature (arbitrarily large reach). The midpoint of two grey points may be up to a distance of $\sqrt{1.25}a_n$ from another grey point, while remaining arbitrarily close to the separating curve.}
\label{fig:explanation125}
\end{figure}

For the $r$-convexity bound, $\epsilon_n = a_n/\sqrt{2}$ is chosen since $d_H(\pc^{(n)},\R^2) = a_n/\sqrt{2}$, for $n\in\N^+$.

For the reach bound, we set $\epsilon_n = \sqrt{1.25}a_n$ for reasons that are more complicated. Even though this choice of $\epsilon_n$ leads to $d_H(\hat{U}^{(n)},U) > \epsilon_n$, or $d_H(\hat{W}^{(n)},W) > \epsilon_n$ with positive probability, it is sufficient for the result of Theorem~\ref{thm:reach_estimator} in this case. We provide Figure~\ref{fig:explanation125} for some intuition, but a formal proof is omitted.
\smallskip

A linear regression on the log-log plot of the sample means of the data in Figure~\ref{fig:plots}~(a) shows that
$$\E[\hat{\mathrm{r}}^{(a_n/\sqrt{2})}(\hat{U}^{(n)})] \approx 1 + 1.71n^{-0.59}\ \ \mbox{and}\ \ \E[\hat{\mathrm{rch}}^{(\sqrt{1.25}a_n)}(\hat{U}^{(n)})] \approx 1 + 1.54n^{-0.62}.$$
These empirical convergence rates --- $\bigo(\epsilon_n^{0.59})$ for the bound on $\rconv(U)$ and $\bigo(\epsilon_n^{0.62})$ for the bound on $\reach(U)$ --- are both slightly faster than the predicted rate of $\bigo(\sqrt{\epsilon_n})$ established in~\eqref{eqn:rch_converge_rate} for the bound on the reach. Since the reach of $U$ is determined by a region of maximal curvature, the convergence rate is governed by the analysis in Case~2 in the proof of Theorem~\ref{thm:reach_estimator}.

For the data in panel~(b) of Figure~\ref{fig:plots},
$$\E[\hat{\mathrm{r}}^{(a_n/\sqrt{2})}(\hat{W}^{(n)})] \approx 1 + 2.81n^{-1.20}\ \ \mbox{and}\ \ \E[\hat{\mathrm{rch}}^{(\sqrt{1.25}a_n)}(\hat{W}^{(n)})] \approx 1 + 0.65n^{-1.50}.$$
Again, these empirical convergence rates are faster than the anticipated rate of $\bigo(\epsilon_n)$ established for the bound on the reach in Case~1 (\textit{i.e.}, when the reach is determined by a bottleneck structure) in the proof of Theorem~\ref{thm:reach_estimator}.

\section{Proofs and technical results}\label{sec:proofs}

Here, we provide proofs and auxiliary lemmas that support the results in Sections~\ref{sec:definitions} and~\ref{sec:point_cloud}.


\begin{figure}[t]
\centering
\includegraphics[width=0.6\linewidth]{./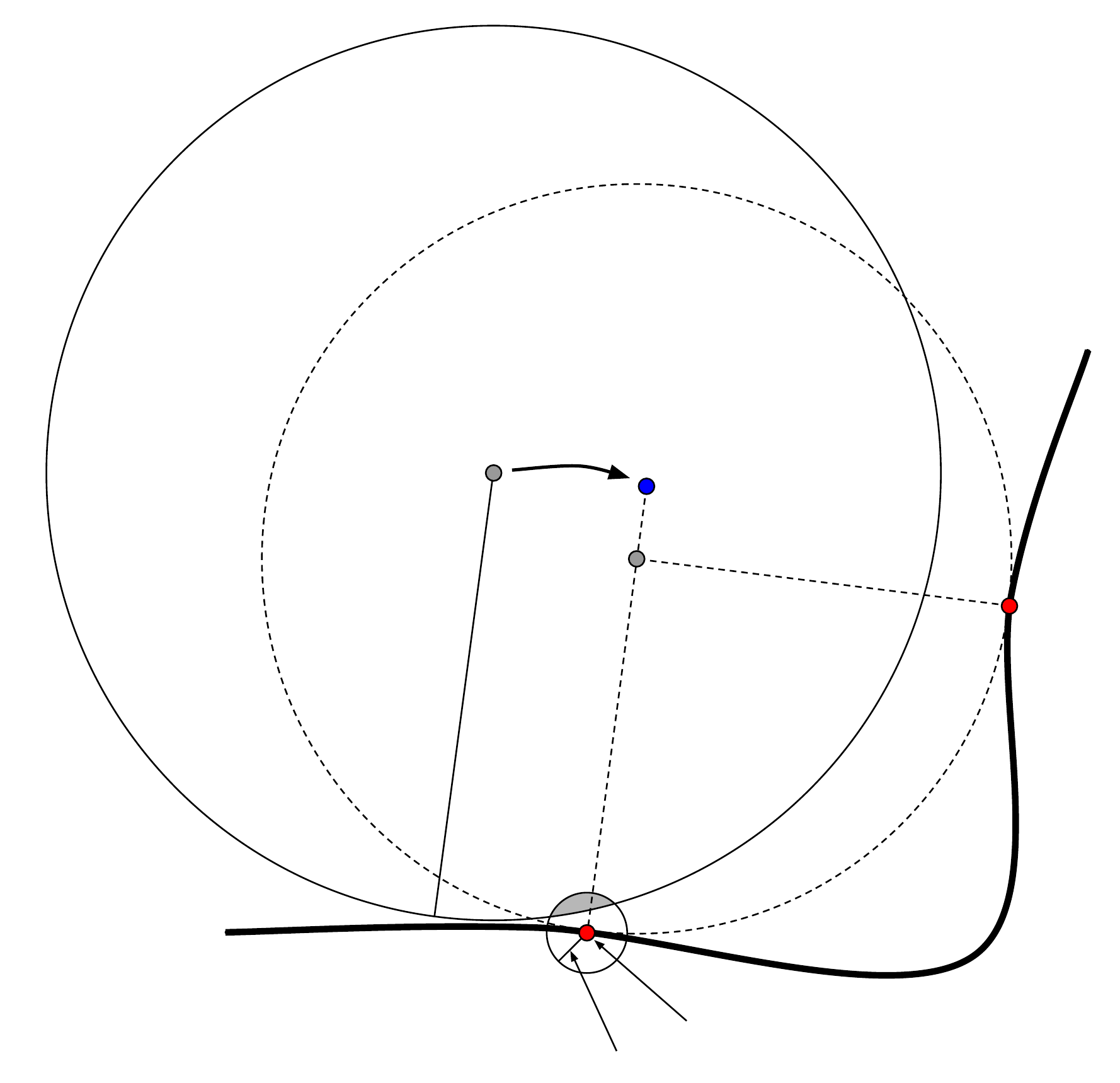}
\put(-3,125){\small{$\partial A$}}
\put(-90,-1){\small{$\epsilon$}}
\put(-80,5){\small{$a_1$}}
\put(-17,83){\small{$a_2$}}
\put(-84,111){\small{$a_1 + rn_1$}}
\put(-85,83){\small{$p$}}
\put(-120,115){\small{$x$}}
\put(-126,69){\small{$r$}}
\caption{The quantities used in the proof of Theorem~\ref{thm:reach_closing}. Sending $\epsilon\to 0$ leads to the largest circle intersecting $\partial A$ if it is to always intersect $B(a_1,\epsilon)$.}
\label{fig:proof_helper_technical}
\end{figure}

\begin{proof}[Proof of Theorem~\ref{thm:reach_closing}]
We begin by showing~\eqref{eqn:reach_inclusion}. Suppose that $r \in (0,\reach(A))$. By Lemma~\ref{lem:additivity_of_dilation}, $A \subseteq A_{\bullet r}$, so it suffices to show that $A^c \subseteq (A_{\bullet r})^c$. Let $x\in A^c$. If $x\in (A_r)^c$, then clearly, $x\in (A_r)^c \oplus B(0,r) = (A_{\bullet r})^c$. If $x\in A_r\setminus A$, then by \citet[Corollary 4.9]{federer1959},
$$\delta_{(A_r)^c}(x) = r - \delta_A(x) < r,$$
and so $x \in (A_r)^c\oplus B(0,r) = (A_{\bullet r})^c$, which proves~\eqref{eqn:reach_inclusion}. What remains to be shown is that if $\partial A$ is $C^1$-smooth and $(d-1)$-dimensional, then $\rconv(A) \leq \reach(A)$. This inequality is shown via proof by contradiction. Suppose that
\begin{enumerate}
\item[(i)] $r > \reach(A)$\qquad and
\item[(ii)] $A_{\bullet r} = A$.
\end{enumerate}
By~(i), there exists $p\in A_{\frac{r + \reach(A)}{2}}$ with no unique point in $A$ closest to $p$. In particular, since $A$ is closed, there exists two non-identical points $a_1, a_2\in \partial A \subseteq A$ such that $\norm{p-a_1} = \norm{p-a_2} = \delta_A(p) < r$. Let $n_1$ be the unit normal vector to $\partial A$ at $a_1$, pointing towards $p$. Since $a_1$ is a limit point of $A$, one has $A^c \cap B(a_1,\epsilon) \neq \emptyset$ for all $\epsilon> 0$. By~(ii), $A^c = \bigcup_{x\in (A_r)^c} B(x,r)$, and so for all $\epsilon > 0$, there exists $x\in (A_r)^c$ such that
\begin{enumerate}
\item[(iii)] $B(x,r)\cap B(a_1,\epsilon) \neq \emptyset$\qquad and
\item[(iv)] $B(x,r)\cap A = \emptyset$.
\end{enumerate}
The boundary $\partial A$ is $C^1$-smooth at $a_1$, so it is easily checked that for $\epsilon$ close to 0, the locations $x$ that satisfy both~(iii) and~(iv) are necessarily contained in a small neighbourhood around $a_1 + rn_1$ (see Figure~\ref{fig:proof_helper_technical}). That is, there exists a mapping $\theta:\epsilon\mapsto \theta(\epsilon)\in\R^+$ that tends to 0 as $\epsilon\to 0$, such that if $x\in(A_r)^c$ and $\epsilon\in\R^+$ satisfy~(iii) and~(iv), then $x\in B(a_1+rn_1,\theta(\epsilon))$. Note that $\norm{(a_1 + rn_1) - a_2} < r$ since $a_2\in B(p,\delta_A(p)) \subset B(a_1 + rn_1, r)$. Choose $\epsilon$ such that $\theta(\epsilon) < r - \norm{(a_1 + rn_1) - a_2}$. Then for any $x\in(A_r)^c\cap B(a_1+rn_1,\theta(\epsilon))$,
\begin{align*}
\norm{x-a_2} &\leq \norm{x-(a_1+rn_1)} + \norm{(a_1 + rn_1) - a_2}\\
&\leq \theta(\epsilon) + \norm{(a_1 + rn_1) - a_2} < r.
\end{align*}
This contradicts~(iv).
\end{proof}

\begin{lemm}\label{lem:bullet_idempotence}
Let $U$ be an open set in $\R^d$, and let $r \geq 0$. Then,
$$\cl(U)_{\bullet r} = \cl(U)\qquad \Longrightarrow\qquad U_{\bullet r}= U.$$
\end{lemm}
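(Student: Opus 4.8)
The plan is to prove the two inclusions of $U_{\bullet r} = U$ separately. For $r = 0$ the closing is the identity and there is nothing to show, so assume $r > 0$; then $U \subseteq U_{\bullet r}$ is immediate from Lemma~\ref{lem:additivity_of_dilation}(b) applied with $A = U$. The whole content of the statement is therefore the reverse inclusion $U_{\bullet r} \subseteq U$, and this is where the hypothesis $\cl(U)_{\bullet r} = \cl(U)$ must enter.

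First I would confine $U_{\bullet r}$ to $\cl(U)$ by a pure monotonicity argument. Since $U \subseteq \cl(U)$, Minkowski addition gives $U_r = U \oplus B(0,r) \subseteq \cl(U) \oplus B(0,r) = \cl(U)_r$; and since erosion $A \mapsto A_{-r} = (A^c \oplus B(0,r))^c$ preserves inclusions, applying it yields
$$U_{\bullet r} = (U_r)_{-r} \subseteq (\cl(U)_r)_{-r} = \cl(U)_{\bullet r} = \cl(U),$$
the last equality being the hypothesis. Next I would record two openness facts. The dilation $U_r = \bigcup_{b \in B(0,r)}(U + b)$ is a union of translates of the open set $U$, hence open. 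Moreover the erosion of an open set by the compact ball $B(0,r)$ is again open: if $B(x,r) \subseteq U_r$, then, $B(x,r)$ being compact and $U_r$ open, there is a margin $\eta > 0$ with $B(x,r+\eta) \subseteq U_r$, so the whole neighbourhood $\{y : \norm{y-x} < \eta\}$ lies in $(U_r)_{-r}$. Thus $U_{\bullet r}$ is open.

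Combining the two observations, $U_{\bullet r}$ is an open subset of $\cl(U)$, hence $U_{\bullet r} \subseteq \mathrm{int}(\cl(U))$; together with $U \subseteq U_{\bullet r}$ this sandwiches $U \subseteq U_{\bullet r} \subseteq \mathrm{int}(\cl(U))$. The main obstacle is the final identification $\mathrm{int}(\cl(U)) = U$: a general open set need not coincide with the interior of its closure (think of an open ball with a single interior point removed), and it is exactly here that one must rule out that $U_{\bullet r}$ picks up points of $\partial U$ sitting inside $\cl(U)$. To close this gap I would argue through complements, using $\cl(U)_{\bullet r} = \cl(U)$ in its dual form: the open exterior $(\cl(U))^c$ equals its own opening by $B(0,r)$, so every exterior point is covered by a radius-$r$ ball contained in $(\cl(U))^c$. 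Given a hypothetical $x \in U_{\bullet r} \cap \partial U$, I would approach $x$ by exterior points and transport such a covering ball to contradict $B(x,r) \subseteq U_r$.

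This last transport step is the delicate point: it requires that $\partial U$ be accessible from the exterior $(\cl(U))^c$ (equivalently, that $U$ have no ``interior holes''), and it is the part of the argument I would expect to demand the most care, since it is precisely the feature that distinguishes the well-behaved sets for which the statement holds from the pathological open sets where the sandwich $U \subseteq U_{\bullet r} \subseteq \mathrm{int}(\cl(U))$ can be strict.
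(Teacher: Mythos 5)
Your argument tracks the paper's own proof almost line for line: both get $U \subseteq U_{\bullet r}$ from Lemma~\ref{lem:additivity_of_dilation}(b), both use monotonicity of dilation and erosion to conclude $U_{\bullet r} \subseteq \cl(U)_{\bullet r} = \cl(U)$, and both observe that $U_{\bullet r}$ is open (you actually justify the openness of the erosion via a compactness margin, which the paper omits). The divergence is at the last step. The paper asserts that $U_{\bullet r}$ is ``an open subset of $\cl(U)$ that contains all of the interior points of $\cl(U)$, therefore $U_{\bullet r}=U$'', which tacitly identifies $U$ with $\mathrm{int}(\cl(U))$; you refuse to make that identification and instead try to exclude points of $U_{\bullet r}\cap\partial U$ by a transport argument from the exterior. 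You are right to balk, and the parenthetical example you mention already refutes the lemma as stated: take $U=\{x\in\R^2 : 0<\norm{x}<1\}$ and any $r>0$. Then $\cl(U)$ is the closed unit ball, which is convex, so $\cl(U)_{\bullet r}=\cl(U)$; but $U_r=\{x:\norm{x}<1+r\}$ and hence $U_{\bullet r}=\{x:\norm{x}<1\}\neq U$ --- the puncture is filled in by the closing. So no amount of care can complete your transport step for a general open $U$; it genuinely requires the extra hypothesis $U=\mathrm{int}(\cl(U))$ (regular openness), under which your sandwich $U\subseteq U_{\bullet r}\subseteq \mathrm{int}(\cl(U))$ closes immediately and the transport step becomes unnecessary.

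In short, your proof is incomplete, but the incompleteness is forced: the implication is false without an additional regularity hypothesis on $U$, and the paper's proof hides the same gap in its final sentence. The issue is not cosmetic, because the lemma is invoked in the proof of Lemma~\ref{lem:closed_complement} with $U=A^c$ for closed $A$, and such complements need not be regular open (take $A$ a single point, so that $A^c$ is a punctured $\R^d$ while $\mathrm{int}(\cl(A^c))=\R^d$). A correct version of the statement should either assume $U=\mathrm{int}(\cl(U))$ outright or restrict the class of closed sets $A$ to which the downstream argument applies.
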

\begin{proof}[Proof of Lemma~\ref{lem:bullet_idempotence}]
By Lemma~\ref{lem:additivity_of_dilation}, item~(b), $U\subseteq U_{\bullet r} \subseteq \cl(U)_{\bullet r}$. Note also that $U_{\bullet r}$ is open. Suppose that $\cl(U)_{\bullet r} = \cl(U)$. Then $U_{\bullet r}$ is an open subset of $\cl(U)$ that contains all of the interior points of $\cl(U)$. Therefore, $U_{\bullet r} = U$.
\end{proof}

\begin{lemm}\label{lem:closed_complement}
Let $A\subseteq \R^d$ be closed. It holds that
$$\rconv(\cl(A^c)) = \sup\{r\in\R : A_{\bullet -r} = A\}$$
\end{lemm}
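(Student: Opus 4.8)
The plan is to dualize the opening condition on $A$ into a closing condition on the complement $A^c$, and then to show that passing to the closure does not change the $r$-convexity of the open set $A^c$. First I would record the complement identities behind Lemma~\ref{lem:additivity_of_dilation}(a): for $r>0$ one has $(A_r)^c=(A^c)_{-r}$ and, applying (a) with $A^c$ in place of $A$, also $(A_{-r})^c=(A^c)_r$. Using these in turn gives, for every $r>0$,
\[
(A_{\bullet -r})^c=\big((A_{-r})_r\big)^c=\big((A_{-r})^c\big)_{-r}=\big((A^c)_r\big)_{-r}=(A^c)_{\bullet r}.
\]
Hence $A_{\bullet -r}=A$ if and only if $(A^c)_{\bullet r}=A^c$, so the index sets $\{r:A_{\bullet -r}=A\}$ and $\{r:(A^c)_{\bullet r}=A^c\}$ coincide on $(0,\infty)$ and both contain $0$; since each supremum is nonnegative, $\sup\{r:A_{\bullet -r}=A\}=\rconv(A^c)$ by Definition~\ref{def:r-convex}. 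The lemma is therefore equivalent to the identity $\rconv(\cl(A^c))=\rconv(A^c)$, and I write $U:=A^c$, an open set (the case $A=\R^d$ being trivial).

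The inequality $\rconv(\cl U)\leq\rconv(U)$ follows directly from Lemma~\ref{lem:bullet_idempotence}: whenever $r\geq 0$ satisfies $\cl(U)_{\bullet r}=\cl(U)$, that lemma yields $U_{\bullet r}=U$, so the index set defining $\rconv(\cl U)$ sits inside the one defining $\rconv(U)$; as $0$ lies in both and both suprema are nonnegative, taking suprema gives the claim.

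The reverse inequality $\rconv(U)\leq\rconv(\cl U)$ is the crux, and I would deduce it from the commutation identity $\cl(U)_{\bullet r}=\cl(U_{\bullet r})$: granting it, $U_{\bullet r}=U$ immediately forces $\cl(U)_{\bullet r}=\cl(U_{\bullet r})=\cl U$, so the index set for $\rconv(U)$ embeds into that for $\rconv(\cl U)$. To prove the identity I would split the closing $(\,\cdot\,)_{\bullet r}=\big((\,\cdot\,)_r\big)_{-r}$ into its dilation and erosion steps. The dilation step $\cl(U)_r=\cl(U_r)$ is routine: since $\delta_U=\delta_{\cl U}$ one has $\cl(U)_r=\{x:\delta_U(x)\leq r\}$, while the open set $U_r$ is squeezed between $\{x:\delta_U(x)<r\}$ and $\{x:\delta_U(x)\leq r\}$, whose common closure is $\{x:\delta_U(x)\leq r\}$. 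Writing $W:=U_r$, it then remains to prove the erosion step $(\cl W)_{-r}=\cl(W_{-r})$.

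This erosion step is where I expect the main difficulty. The inclusion $\cl(W_{-r})\subseteq(\cl W)_{-r}$ is free, because $W_{-r}\subseteq(\cl W)_{-r}$ by monotonicity and $(\cl W)_{-r}$, being the erosion of a closed set, is closed. The opposite inclusion genuinely uses that $W$ is a dilation: a general open set can fail it (for instance an interval with one interior point removed), so one must exploit that every boundary point of $W=U_r$ lies on the level set $\{x:\delta_U(x)=r\}$. Concretely, given $x$ with $B(x,r)\subseteq\cl W$, I would perturb the center slightly inward to produce nearby $x'$ with $B(x',r)\subseteq W$, and then let the perturbation tend to $0$ to conclude $x\in\cl(W_{-r})$. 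Combining the two steps yields the commutation identity, hence $\rconv(U)\leq\rconv(\cl U)$; together with the previous inequality this gives $\rconv(\cl(A^c))=\rconv(A^c)=\sup\{r:A_{\bullet -r}=A\}$, as required.
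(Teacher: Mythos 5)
Your reduction of the lemma to $\rconv(\cl(A^c))=\rconv(A^c)$ via the duality $(A_{\bullet -r})^c=(A^c)_{\bullet r}$ is correct, and your easy direction $\rconv(\cl U)\le\rconv(U)$ via Lemma~\ref{lem:bullet_idempotence} is exactly the paper's first step. The reverse inequality, however, rests on the commutation identity $\cl(U)_{\bullet r}=\cl(U_{\bullet r})$, and this identity is false. Take $A=\{(x,y)\in\R^2:\vert y\vert\le 1\}$, so $U=A^c=\{\vert y\vert>1\}$, and $r=1$. Then $U_1=\R^2\setminus\{y=0\}$ and $(U_1)_{-1}=U$, so $U_{\bullet 1}=U$ and $\cl(U_{\bullet 1})=\{\vert y\vert\ge 1\}$; but $\cl(U)_1=\R^2$, hence $\cl(U)_{\bullet 1}=\R^2$. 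Thus $U_{\bullet r}=U$ does \emph{not} imply $\cl(U)_{\bullet r}=\cl(U)$, and the index set for $\rconv(U)$ does not sit inside the one for $\rconv(\cl U)$ (here they are $[0,1]$ and $[0,1)$ respectively). The failure is located precisely in your erosion step: with $W=U_1=\R^2\setminus\{y=0\}$ one has $\cl W=\R^2$, so $B(0,1)\subseteq\cl W$, yet the removed hyperplane $\{y=0\}$ passes through the \emph{center} of that ball, and the nearest $x'$ with $B(x',1)\subseteq W$ lies at distance exactly $1$ from the origin. No small inward perturbation exists; the fact that $\partial W$ lies on the level set $\{\delta_U=r\}$ does not prevent $\cl W\setminus W$ from cutting deep into the interior of a ball of radius $r$.

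In this example the two suprema still agree, because the discrepancy occurs only at the endpoint $r=1$, but your argument gives no reason why the discrepancy must be confined to the endpoint, so the inequality $\rconv(U)\le\rconv(\cl U)$ is not established. The paper closes exactly this gap with Proposition~\ref{prp:open_subset_rconv}: for $r>\tilde r>\rconv(\cl(A^c))$ it extracts an open ball of some radius $\epsilon>0$ inside $\cl(A^c)_{\bullet\tilde r}\setminus\cl(A^c)$, then shows that a concentric ball of radius $\epsilon/2$ survives inside $(A^c)_{\bullet r}\setminus A^c$ after replacing $\cl(A^c)$ by the slightly larger dilation $(A^c)_{\delta/2}$ and absorbing the cost into the slack $r-\tilde r$. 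The quantitative open-ball content and the strict gap between $r$ and $\tilde r$ are what your pointwise commutation argument lacks; without some such device the passage from the open set to its closure cannot be carried out at a fixed radius.
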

\begin{proof}[Proof of Lemma~\ref{lem:closed_complement}]
By Lemma~\ref{lem:bullet_idempotence} and item~(a) in Lemma~\ref{lem:additivity_of_dilation}, $\{r\in\R : \cl(A^c)_{\bullet r} = \cl(A^c)\} \subseteq \{r\in\R : (A^c)_{\bullet r} = A^c\} = \{r\in\R : A_{\bullet -r} = A\}$. Therefore, $\rconv(\cl(A^c)) \leq \sup\{r\in\R : A_{\bullet -r} = A\}$. Now we show the reverse inequality. Let $r,\tilde r \in \R^+$ satisfy $r> \tilde r > \rconv(\cl(A^c))$. By Proposition~\ref{prp:open_subset_rconv}, $\cl(A^c)_{\bullet \tilde r} \setminus \cl(A^c)$ contains a ball of radius $\epsilon$ for some $\epsilon \in (0,r)$. Now, let $\delta = \min(\epsilon, 2(r-\tilde{r}))$. Note that $(A^c)_{\delta/2}\supseteq \cl(A^c)$, and so by Lemma~\ref{lem:additivity_of_dilation}, $((A^c)_{\delta/2})_{\bullet r-\delta/2} = ((A^c)_r)_{-(r-\delta/2)} \supseteq \cl(A^c)_{\bullet r-\delta/2} \supseteq \cl(A^c)_{\bullet \tilde r}$, which contains the ball of radius $\epsilon$. Now, eroding by $\delta/2$ preserves a ball of radius $\epsilon - \delta/2 \geq \epsilon/2$. That is, $(((A^c)_{\delta/2})_{\bullet r-\delta/2})_{-\delta/2} = (A^c)_{\bullet r}$ contains a ball of radius $\epsilon/2$ that does not intersect $\cl(A^c)$, and so $(A^c)_{\bullet r} \neq A^c$ which implies the desired $A_{\bullet -r} \neq A$ by Lemma~\ref{lem:additivity_of_dilation}, item~(a).
\end{proof}

\begin{proof}[Proof of Corollary~\ref{cor:serra}]
If $\rconv(A)=0$, then by Theorem~\ref{thm:reach_closing}, $\reach(A)=0$ and we are done. Now assume $\rconv(A) > 0$ and $\rconv(\cl(A^c)) > 0$, then by Lemma~\ref{lem:closed_complement}, there exists $\delta > 0$ such that $A_{\bullet r}= A$ for all $r\in (-\delta,\delta)$. Theorem~1 in \cite{walther1999} states that $\partial A$ is $(d-1)$-dimensional and $C^1$-smooth, which implies Equation~\ref{eqn:reach_equality} in our Theorem~\ref{thm:reach_closing}.
\end{proof}

\begin{proof}[Proof of Corollary~\ref{cor:small_dilation}]
By Theorem~\ref{thm:reach_closing}, the statement holds if $\partial (A_\epsilon)$ is $C^1$-smooth and $(d-1)$-dimensional. Otherwise, if there is indeed a point $c$ on the boundary of $A_\epsilon=\bigcup_{a\in A} B(a,\epsilon)$ that does not have a continuous derivative, then $c$ is an intersection point of two distinct $d$-spheres centered in $A$ of radius $\epsilon$. In other words, $c$ is at cusp that points inwards towards the interior of $A_\epsilon$, making $\reach(A_\epsilon) = \rconv(A_\epsilon)=0$.
\end{proof}

\begin{rema}\label{rem:proof_of_coro_2}
Remark that $(A_{\epsilon})_{\bullet -\epsilon} = A_\epsilon$. Therefore, if the hypotheses of Corollary~\ref{cor:small_dilation} are strengthened to those of Corollary~\ref{cor:serra}, then the proof of Corollary~\ref{cor:small_dilation} holds by applying Theorem~1 in \cite{walther1999} followed by our Theorem~\ref{thm:reach_closing}.
\end{rema}

Here, we present an auxiliary lemma, and use it in our proof of Proposition~\ref{prp:open_subset_rconv} in Section~\ref{sec:point_cloud_rconv}.

\begin{lemm}\label{lem:psi_equivalence}
For closed $A\subseteq \R^d$ and $r\in\R^+$, it holds that
\begin{equation}\label{eqn:psi_equivalence}
A_{\bullet r} = \left\{p\in \R^d : \forall x\in  B(0,r),\,B(p+x,r)\cap A \neq \emptyset\right\}.
\end{equation}
\end{lemm}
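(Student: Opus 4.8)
The plan is to unwind the definition of the closing $A_{\bullet r} = (A_r)_{-r}$ into a statement about the Minkowski difference, and then translate it back into the language of ball intersections, using the fact that $A$ (and hence $A_r$) is closed. First I would record the elementary observation already noted after Definition~\ref{def:minkowski}: since $A$ is closed, $A_r = \{y\in\R^d : \delta_A(y)\leq r\}$, so that $y\in A_r$ is equivalent to $B(y,r)\cap A\neq\emptyset$. This immediately identifies the right-hand side of~\eqref{eqn:psi_equivalence} with the set of $p\in\R^d$ satisfying $p+x\in A_r$ for every $x\in B(0,r)$, which is the form I will aim to reach from the left-hand side.

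Next I would expand the erosion via the complement identity built into Definition~\ref{def:minkowski}, namely $(A_r)_{-r} = (A_r)\ominus B(0,r) = \big((A_r)^c \oplus B(0,r)\big)^c$. Consequently $p\in A_{\bullet r}$ if and only if $p\notin (A_r)^c \oplus B(0,r)$; that is, there exist \emph{no} $q\in(A_r)^c$ and $x\in B(0,r)$ with $p = q+x$. Negating the existential, $p\in A_{\bullet r}$ precisely when $p-x\in A_r$ for all $x\in B(0,r)$.

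Finally I would invoke the symmetry of the ball: because $x\in B(0,r)$ if and only if $-x\in B(0,r)$, the condition ``$p-x\in A_r$ for all $x\in B(0,r)$'' is the same as ``$p+x\in A_r$ for all $x\in B(0,r)$''. Combining this with the first step, where $p+x\in A_r$ is equivalent to $B(p+x,r)\cap A\neq\emptyset$, yields exactly the right-hand side of~\eqref{eqn:psi_equivalence}, completing the argument.

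I do not anticipate a genuine obstacle here, as the lemma is essentially a definitional rewriting; the claim is an equality of two explicitly described sets and each implication is reversible. The only points requiring care are the bookkeeping in passing the erosion through the complement identity $(A_r)\ominus B(0,r) = \big((A_r)^c\oplus B(0,r)\big)^c$, the harmless substitution $x\mapsto -x$ exploiting that $B(0,r)$ is centrally symmetric, and the appeal to closedness of $A$ to guarantee $A_r = \{y : \delta_A(y)\leq r\}$ (so that the nearest-point distance is realized and the ball-intersection reformulation is exact).
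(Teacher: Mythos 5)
Your proposal is correct and follows essentially the same route as the paper: the identity $(A_r)_{-r}=\big((A_r)^c\oplus B(0,r)\big)^c$ is exactly the paper's representation of $A_{\bullet r}$ as the complement of a union of $r$-balls centered in $(A_r)^c$, and your translation via $y\in A_r \Leftrightarrow B(y,r)\cap A\neq\emptyset$ (using closedness of $A$) together with the symmetry of $B(0,r)$ is the same geometric reading the paper gives. Your write-up is merely a little more explicit about where closedness of $A$ is used, which is a welcome clarification rather than a deviation.
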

\begin{proof}[Proof of Lemma~\ref{lem:psi_equivalence}]
By manipulating the expressions in Definition~\ref{def:minkowski}, one obtains
\begin{equation}\label{eqn:complement_of_balls}
A_{\bullet r} = \left(\bigcup_{y\in(A\oplus B(0,r))^c} B(y,r)\right)^c,
\end{equation}
which reads: \emph{the elements of $A_{\bullet r}$ are the $p\in\R^d$ such that $p$ is not contained in any closed ball of radius $r$ that does not intersect $A$}. This statement is equivalent to: \emph{$p\notin A_{\bullet r}$ if and only if there exists a closed ball of radius $r$ that contains $p$ but does not intersect $A$.} Thus, we have shown that $(A_{\bullet r})^c$ is equal to the complement of the RHS of~\eqref{eqn:psi_equivalence}, which proves the desired result.
\end{proof}

\begin{proof}[Proof of Proposition~\ref{prp:open_subset_rconv}]
Let $\tilde r \in (\rconv(A), r)$ and fix $p\in A_{\bullet \tilde r}\setminus A$. Since $A$ is closed, there is an open neighbourhood containing $p$ that does not intersect $A$. Choose $\epsilon \in \R^+$ such that $B(p,\epsilon)\cap A = \emptyset$. There exists a sufficiently small $\delta \in (0,\epsilon)$ such that for every $y\in B(p,\delta + r)$, there exists an $x\in B(0,\tilde r)$ that satisfies $B(p+x,\tilde r) \setminus B(p,\epsilon) \subset B(y,r)$. Since $p\in A_{\bullet \tilde r}$, one has that $B(p+x,\tilde r) \setminus B(p,\epsilon)$ contains an element of $A$ by Lemma~\ref{lem:psi_equivalence}. By inclusion, $B(y,r)$ contains a point in $A$ as well.

Let $z\in B(p,\delta)$. We have shown that $z$ is in the right-hand side of~\eqref{eqn:psi_equivalence}, since, for all $x\in B(0,r)$, one has $y:=z+x\in B(z,\delta + r)$ by the triangle inequality, and so by previous arguments, $B(z+x,r)$ contains an element of $A$. Therefore, by Lemma~\ref{lem:psi_equivalence}, $z\in A_{\bullet r}$. But $z$ is not in $A$ since $B(p,\delta) \subseteq B(p,\epsilon) \subseteq A^c$. Therefore, $z\in A_{\bullet r}\setminus A$ and so $B(p,\delta) \subseteq A_{\bullet r}\setminus A$.
\end{proof}

\begin{proof}[Proof of Theorem~\ref{thm:rconv_estimator}]
We begin by showing~\eqref{eqn:rconv_bound}.
Let $n\in\N^+$ and fix $p\in\pc^{(n)}\setminus \hat{A}^{(n)}$. If $\epsilon_n\geq \rconv(A)$, then~\eqref{eqn:rconv_bound} holds trivially. Now, let $r\in\R^+$ be such that $\epsilon_n < r < \rconv(A)$ so that $A_{\bullet r} = A$. We aim to show that $p\notin (\hat{A}^{(n)}_{r-\epsilon_n})_{-(r+\epsilon_n)}$. Indeed, by the $r$-convexity of $A$, there exists $x\in (A_r)^c$ such that $\norm{x-p} < r$. In addition, $B(x,\epsilon_n)\cap A_{r-\epsilon_n} = \emptyset$, so there exists $q\in\pc^{(n)}\setminus \hat{A}^{(n)}_{r-\epsilon_n}$ such that $\norm{q-x} \leq \epsilon_n$. Therefore, by the triangle inequality, $\norm{q-p} \leq r + \epsilon_n$, which implies $\delta_{\pc^{(n)}\setminus \hat{A}^{(n)}_{r-\epsilon_n}}(p) \leq r + \epsilon_n$. Thus, $p\notin (\hat{A}^{(n)}_{r-\epsilon_n})_{-(r+\epsilon_n)}$ as desired.
\smallskip

Now, to prove~\eqref{eqn:rconv_lim}, first fix $r > \rconv(A)$. To simplify notation, let $\delta_n := d_H(\hat{A}^{(n)},A)$. By Proposition~\ref{prp:open_subset_rconv}, there exists an open subset $O \subseteq A_{\bullet r}\setminus A$ that contains a closed ball of radius $\delta_{n_0} + 3\epsilon_{n_0}$ for sufficiently large $n_{0}\in\N^+$. Let $n\geq n_0$. The point cloud $\pc^{(n)}$ is sufficiently dense such that there exists $q \in O_{-(\delta_n + 2\epsilon_n)}\cap \pc^{(n)}$. 
Importantly, this implies that $q\in (A_r)_{-(r+\delta_n + 2\epsilon_n)}\cap \pc^{(n)}$. By Lemma~\ref{lem:additivity_of_dilation}, for all $s > r$, we have $(A_r)_{-(r+\delta_n + 2\epsilon_n)} \subseteq ((A_r)_{\bullet (s-r)})_{-(r+\delta_n + 2\epsilon_n)} = (A_s)_{-(s+\delta_n + 2\epsilon_n)}$, and therefore $q\in (A_s)_{-(s+\delta_n + 2\epsilon_n)}\cap \pc^{(n)}$. Notice that $A_s \cap \pc^{(n)} \subseteq \hat{A}^{(n)}_{\delta_n+s}$, which implies $(A_s)^c \supseteq \pc^{(n)}\setminus \hat{A}^{(n)}_{s + \delta_n}$ and thus $(A_s)_{-(s+\delta_n + 2\epsilon_n)}\cap \pc^{(n)} \subseteq (\hat{A}^{(n)}_{\delta_n + s})_{-(s+\delta_n + 2\epsilon_n)}$. Summarizing, we have shown that there is a point $q$ in $(\hat{A}^{(n)}_{\delta_n + s})_{-(s+\delta_n + 2\epsilon_n)}$ that is not in $\hat{A}^{(n)}$. By the change of variables $\tilde{s} := s + \delta_n + \epsilon_n$, it follows that $(\hat{A}^{(n)}_{\tilde{s}-\epsilon_n})_{-(\tilde{s} + \epsilon_n)} \setminus \hat{A}^{(n)} \neq \emptyset$ for all $\tilde{s} > r + \delta_n + \epsilon_n$, which implies $\rnv{A}{n} \leq r +\delta_n + \epsilon_n$. Sending $n\to\infty$ yields $\lim_{n\to \infty}\rnv{A}{n} \leq r$, and since $r \in (\rconv(A),\infty)$ was chosen freely, $\lim_{n\to \infty}\rnv{A}{n} \leq \rconv(A)$. This result, along with~\eqref{eqn:rconv_bound}, gives the convergence in~\eqref{eqn:rconv_lim}.
\end{proof}

\paragraph*{Justification for Equations~\eqref{eqn:example_reach} and~\eqref{eqn:example_beta_reach}.}
The reach of $A$ is equal to the inverse of the curvature of $f$ at $x=0$ \cite[Theorem~3.4]{aamari2019}. Equation~\eqref{eqn:example_reach} holds since the curvature of $f$ at $x=0$ is $f''(0) = 2h'(0)$.

Now we show~\eqref{eqn:example_beta_reach}.
Without loss of generality, suppose $h(0) = f(0) = 0$. For $x$ in a neighbourhood of 0, consider the symmetric points $a_1 = (x,f(x))$ and $a_2 = (-x,f(x))$ in $A$, and remark that
\begin{equation}\label{eqn:remark_for_justin}
g_{\norm{a_2-a_1}}\circ \delta_A\left(\frac{a_1 + a_2}2\right) = \frac{x^2}{2f(x)} + \frac{f(x)}2,
\end{equation}
since $\delta_A\left(\frac{a_1 + a_2}2\right)=f(x)$.
We claim without proof that for all $\tilde{a_1},\tilde{a_2}\in A$ satisfying $\delta_A\left(\frac{\tilde{a_1}+\tilde{a_2}}2\right) \geq f(x)$, it holds that $g_{\norm{\tilde{a_2}-\tilde{a_1}}}\circ \delta_A\left(\frac{\tilde{a_1} + \tilde{a_2}}2\right) \geq g_{\norm{a_2-a_1}}\circ \delta_A\left(\frac{a_1 + a_2}2\right)$.
In the language of the $\beta$-reach, this is equivalent to
\begin{equation}\label{eqn:equality_for_fustin}
\reach_{f(x)}(A) = g_{\norm{a_2-a_1}}\circ \delta_A\left(\frac{a_1 + a_2}2\right).
\end{equation}
There is a $\delta > 0$ such that $h$ has an inverse on $[0,\delta]$, and so for $\beta\in [0,\delta]$, choose $x$ such that $f(x)=\beta$. Remark that
$$x^2 = h^{-1}(\beta) = \frac\beta{h'(0)} - \frac{\beta^2 h''(0)}{2h'(0)^3} + \littleo(\beta^2),$$
by Taylor's theorem. Plugging into~\eqref{eqn:remark_for_justin} and applying~\eqref{eqn:equality_for_fustin}, one obtains~\eqref{eqn:example_beta_reach}.

\vspace{30pt}
\bibliographystyle{apalike}
\bibliography{biblio.bib} 
\end{document}